\def\R{{\mathbb R}}
\def\D{\Delta}
\def\g{\gamma}
\def\s{\sigma}
\def\t{\theta}
\def\l{\lambda}
\def\p{\partial}
\def\O{\Omega}
\def\o{\omega}
\def\e{\varepsilon}
\def\v{\varphi}
\def\G{\Gamma}
\def\mc{\mathcal}
\def\mf{\mathfrak}
\def\ua{\uparrow}
\def\s{\sigma}
\def\t{\theta}
\def\da{\downarrow}
\newtheorem{theorem}{Theorem}[section]
\newtheorem{lemma}[theorem]{Lemma}
\newtheorem{corollary}[theorem]{Corollary} %%%%%%%
\newtheorem{proposition}[theorem]{Proposition} %%%%%%%
\theoremstyle{definition}
\theoremstyle{remark}
\newtheorem{remark}[theorem]{Remark}
\numberwithin{equation}{section}
\journal{\hspace{-3.4cm}{\color{white}\rule[-0.5cm]{3.5cm}{1cm}}}
\begin{document}

\begin{frontmatter}

%% Title, authors and addresses

%% use the tnoteref command within \title for footnotes;
%% use the tnotetext command for theassociated footnote;
%% use the fnref command within \author or \address for footnotes;
%% use the fntext command for theassociated footnote;
%% use the corref command within \author for corresponding author footnotes;
%% use the cortext command for theassociated footnote;
%% use the ead command for the email address,
%% and the form \ead[url] for the home page:
%% \title{Title\tnoteref{label1}}
%% \tnotetext[label1]{}
%% \author{Name\corref{cor1}\fnref{label2}}
%% \ead{email address}
%% \ead[url]{home page}
%% \fntext[label2]{}
%% \cortext[cor1]{}
%% \address{Address\fnref{label3}}
%% \fntext[label3]{}

\title{The Picone identity: A device to get optimal uniqueness results and global dynamics in Population Dynamics\tnoteref{t1}}
\tnotetext[t1]{Partially supported by the Ministry of Science, Innovation and Universities of Spain under Research Grant PGC2018-097104-B-100, by the Institute of Interdisciplinary Mathematics (IMI) of Complutense University, and by the Ministry of Education and Culture of Spain
under Fellowship Grant FPU15/04755.}

%% use optional labels to link authors explicitly to addresses:
%% \author[label1,label2]{}
%% \address[label1]{}
%% \address[label2]{}

\author{Sergio Fern\'andez-Rinc\'on\corref{cor1}}
\ead{sergfern@ucm.es}
\address{Institute of Interdisciplinary Mathematics, Department of Mathematical Analysis and Applied Mathematics, Complutense University, Madrid 28040, Spain}
\cortext[cor1]{Corresponding author}

\author{Juli\'an L\'opez-G\'omez}
\ead{julian@mat.ucm.es}
\address{Institute of Interdisciplinary Mathematics, Department of Mathematical Analysis and Applied Mathematics, Complutense University, Madrid 28040, Spain}

\begin{abstract}
This paper infers from a generalized Picone identity the uniqueness of the stable positive solution for a class of semilinear equations of superlinear indefinite type, as well as the uniqueness and global attractivity of the coexistence state  in two generalized diffusive prototypes of the symbiotic and competing species models of Lotka--Volterra. The optimality of these uniqueness theorems reveals the tremendous strength of the Picone identity.
\end{abstract}

%%%Graphical abstract
%\begin{graphicalabstract}
%%\includegraphics{grabs}
%\end{graphicalabstract}
%
%%%Research highlights
%\begin{highlights}
%\item Research highlight 1
%\item Research highlight 2
%\end{highlights}

\begin{keyword}
%% keywords here, in the form: keyword \sep keyword
Picone identity \sep superlinear indefinite problems \sep diffusive symbiotic models \sep diffusive competitive models \sep uniqueness of coexistence states \sep global dynamics 
%% PACS codes here, in the form: \PACS code \sep code

%% MSC codes here, in the form: \MSC code \sep code
%% or \MSC[2008] code \sep code (2000 is the default)
\MSC[2010] Primary 35M12 \sep 35J61 \sep 35J57 \sep Secondary 35K57 \sep 35B09 \sep 35Q92 
\end{keyword}

\end{frontmatter}

%%\linenumbers

\section{Introduction}
\label{sec-intro}

Mauro Picone was born in Palermo, Sicily, on the 2\textsuperscript{nd} of May of 1885 and passed away on
April 11\textsuperscript{th} 1977 at Rome. After completing his Degree at the \emph{Scuola Normale Superiore} in 1907, he remained in Pisa as an assistant of U. Dini until 1913. Then, he moved to the Technical University of Turin as an assistant of G. Fubini, where he stayed until he served as an officer in the World War I. It was during the period of the war when he realized the importance of Mathematics to solve practical problems of social relevance. Since then,  the development of constructive methods for solving Partial Differential Equations was central to Picone's vision of Applied Mathematics, though his own work in this field did not enjoy the influence that it certainly deserved on the merging field of modern numerical analysis \cite{BT}, which might be explained by Picone's membership in the Fascist Party and his involvement in the war efforts in the dark days of the World War II \cite{EKR}. Nevertheless, M. Picone contributed to the field of Ordinary Differential Equations with a celebrated identity, named after him as Picone's identity, \cite{Picone}. It is a classical variational identity which was developed, with a great success, to deliver a very short and elegant proof of the Sturm comparison theorem, and it was shown to be an extremely useful device in studying the oscillation of these equations.
\par
Theorem \ref{th2.1} of Section \ref{sec-2} delivers a (new) generalized identity of Picone type valid for arbitrary boundary conditions of mixed type, classical and non-classical, which generalizes, substantially, the previous ones of M. Picone \cite{Picone}, K. Kreith \cite{Kreith}, H. Berestycki, I. Capuzzo-Dolcetta \& L. Nirenberg \cite{BCDN}, J. L\'opez-G\'omez \cite{LG97} and S. Fern\'andez-Rinc\'on \& J. L\'opez-G\'omez \cite{FRLGNA}, as it works out under general boundary conditions of non-classical mixed type. The main goal of this paper is to infer from this generalized Picone identity the uniqueness of the stable positive solution for a class of semilinear equations of superlinear indefinite type, as well as the uniqueness and global attractivity of the coexistence state  in two generalized diffusive prototypes of the symbiotic and competing species models of Lotka--Volterra. The optimality of all these uniqueness results reveals the
tremendous strength of our generalized Picone identity.
\par
In particular, Section \ref{sec-3} analyzes the positive solutions of the superlinear indefinite problem
\begin{equation}
\label{1.1}
\left\{
\begin{array}{ll}
\mathcal{L} u=\lambda u-a(x)f(u) & \hbox{in}\;\;\Omega,\\
\mathcal{B}u=0 & \hbox{on}\;\;\partial\Omega,
\end{array}
\right.
\end{equation}
where $\Omega$ is an open bounded subset of $\mathbb{R}^N$, $N\geq 1$, of class $\mathcal{C}^2$, $a\in\mathcal{C}(\bar\Omega)$ changes of sign in $\O$, $\lambda\in\mathbb{R}$ is a parameter, $f\in\mathcal{C}(\mathbb{R})$, $f\neq 0$, satisfies $f(0)=0$, and
\[
  \mathcal{L}:=-\mathrm{div}(A\nabla\cdot)+C
\]
is an uniformly elliptic operator in divergence form with $A\in\mathcal{M}_N^\mathrm{sym}(\mathcal{C}^1(\bar\Omega))$ and $C\in\mathcal{C}(\bar\Omega)$.
The boundary operator $\mc{B}$ is of general non-classical type, as described in detail
in Section \ref{sec-3}.  The problem \eqref{1.1}  is a generalized version of the simple prototype analyzed by R. G\'omez-Re\~nasco and J. L\'opez-G\'omez
\cite{GRLG00}, \cite{GRLG01}, where $\mc{L}=-\D$, $\mc{B}=\mc{D}$ is the Dirichlet operator on $\p\O$, and
$f(u)=u^p$ for some $p\geq 2$. As a by-product of our generalized Picone identity, in the special case when
$f(u)=u^p$, we can extend the results of \cite{GRLG00} and \cite{GRLG01} characterizing whether, or not,  \eqref{1.1} admits a linearly stable positive solution, as well as establishing its uniqueness if it exists. This is a rather intriguing uniqueness result as it is folklore that some simple prototypes of \eqref{1.1} possess an arbitrarily large number of positive solutions for the appropriate parameter ranges (see R. G\'omez-Re\~nasco and J. L\'opez-G\'omez \cite{GRLG00}, J. L\'opez-G\'omez, M. Molina-Meyer and A. Tellini \cite{LGMMT} J. L\'opez-G\'omez, A. Tellini and F. Zanolin \cite{LGTZ}, J. L\'opez-G\'omez and A. Tellini \cite{LGT},  as well as the recent monograph of G. Feltrin \cite{Fel}). This striking uniqueness theorem relies on the fact that, for the special choice $f(u)=u^p$, $p\geq 2$,  any linearly neutrally stable positive solution of \eqref{1.1}  must be a quadratic subcritical turning point in the entire set of positive solutions, $(\l,u)$, of the problem \eqref{1.1}. One of the main novelties of this section, Theorem \ref{th3.7},
establishes that there are arbitrarily small perturbations of
the function $f(u)=u^p$ for which the previous uniqueness result fails to be true. Therefore, our extension
of the pioneering findings of \cite{GRLG00} and \cite{GRLG01} seems optimal. Section \ref{sec-4} analyzes the global structure of the set of positive solutions of \eqref{1.1} in the special case when $f(u)=u^p$, $p\geq 2$.
\par
In Section \ref{sec-5} we study the Lotka--Volterra symbiotic model
\begin{equation}
\label{1.2}
\left\{
\begin{array}{ll}
d_1\mathcal{L}_1 u=\lambda u-au^2+ b uv & \hbox{in}\;\;\Omega,\\
d_2\mathcal{L}_2 v=\mu v-dv^2+ c uv & \hbox{in}\;\;\Omega,\\
\mathcal{B}_1 u=\mathcal{B}_2 v=0 & \hbox{on}\;\;\partial\Omega,
\end{array}
\right.
\end{equation}
where $a$, $b$, $c$ and $d$ are continuous and positive functions,
$\mc{L}_1$ and $\mc{L}_2$ are second order elliptic operators of the form
\[
  \mathcal{L}_j:=-\mathrm{div}(A_j\nabla\cdot)+C_j,\qquad j=1, 2,
\]
much like the $\mc{L}$ above, and $\mc{B}_1$, $\mc{B}_2$ are
arbitrary boundary operators of mixed type, much like $\mc{B}$ above. As a rather direct consequence of our
generalized Picone identity,
setting
\begin{equation}
\label{1.3}
F_\pm(t):=\frac{1}{8}\left[ 27-18t - t^2\pm(9-t)^{3/2}(1-t)^{1/2}\right],\qquad
t \in[0,1],
\end{equation}
we were able to establish that if $\kappa:=\frac{bc}{ad}\lneq 1$ in $\Omega$ and
\begin{equation}
\label{1.4}
\max_{\bar\Omega} \left( \frac{a d^2}{c^3}F_-(\kappa)\right) \le \min_{\bar\Omega} \left( \frac{a d^2}{c^3}F_+(\kappa)\right),
\end{equation}
then any coexistence state of \eqref{1.2} is linearly stable, regarded as an steady-state solution of its
parabolic counterpart. When, in addition,
\begin{equation}
\label{1.5}
\max_{\bar\Omega}\left(\frac{c}{d}\right)<\min_{\bar\Omega}\left(\frac{a}{b}\right),
\end{equation}
then, the linear stability of the coexistence state entails its uniqueness by means of the fixed point index in cones.
\par
Obviously, \eqref{1.4} holds as soon as
all the functions coefficients $a$, $b$, $c$ and $d$ are assumed to be positive constants, as in the most classical Lotka--Volterra symbiotic model. Thus, since $F_-(t)$ is separated away from $F_+(t)$ when $t$ is away from $t=1$, all these function coefficients are allow to oscillate around arbitrary constants still respecting the estimate \eqref{1.4}, which at the end of the day measures how far away from these
constants can be $a$, $b$, $c$ and $d$ to guarantee the uniqueness of the coexistence state of \eqref{1.2}. Note that \eqref{1.5} entails $\max_{\bar\O}\kappa < 1$. This uniqueness result is completely new with respect to the findings of M. Delgado et al. in \cite{DLGS}.
\par
As a direct consequence of the uniqueness result of the coexistence state, the global dynamics of the parabolic counterpart of \eqref{1.2} can be easily characterized from the local attractive properties of the semitrivial positive solutions of \eqref{1.2}, those with one of their components vanishing. Thus, it mimics the dynamics of the non-spatial model where all diffusion coefficients are switched off to zero.
\par
Finally, in Section \ref{sec-6} we shortly adapt the theory of Section \ref{sec-5} to deal with the diffusive competing species model
\begin{equation}
\label{1.6}
\left\{
\begin{array}{ll}
d_1\mathcal{L}_1 u=\lambda u-au^2- b uv & \hbox{in}\;\;\Omega,\\
d_2\mathcal{L}_2 v=\mu v-dv^2- c uv & \hbox{in}\;\;\Omega,\\
\mathcal{B}_1 u=\mathcal{B}_2 v=0 & \hbox{on}\;\;\partial\Omega.
\end{array}
\right.
\end{equation}
Our new findings provide us with some sharp improvements of our previous results in \cite{FRLGNA}, which already deepened, very substantially, some previous findings of J. Furter and J. Lopez-Gomez \cite{FLG}, and X. He \& W. M. Ni \cite{HeNiA}, \cite{HeNiB} on competing species dynamics.

\section{A generalized Picone identity}
\label{sec-2}

\noindent The main goal of this section is to provide with a generalized version of a celebrated identity of M. Picone \cite{Picone}, which extends the previous ones of K. Kreith \cite{Kreith}, H. Berestycki, I. Capuzzo-Dolcetta \& L. Nirenberg \cite{BCDN}, J. L\'opez-G\'omez \cite{LG97} and S. Fern\'andez-Rinc\'on \& J. L\'opez-G\'omez \cite{FRLGNA}. It can be stated as follows. Throughout this paper, given any real Banach space $X$ and any integer $N\geq 1$, we denote by $\mathcal{M}_N^\mathrm{sym}(X)$ the set of symmetric square matrices of order $N$ with coefficients in $X$.

\begin{theorem}
\label{th2.1} Suppose that $\Omega$ is a bounded open subdomain of $\mathbb{R}^N$, $N\geq 1$, of class $\mathcal{C}^2$, $\mathrm{n}$ stands for the outward unit normal vector field along $\partial\Omega$, and let $u,v\in W^{2,\t}(\Omega)$, $\t>N$, be such that $\frac{v}{u}\in\mathcal{C}^1(\bar\Omega)$ and $\mathcal{L}u,\mathcal{L}v\in\mathcal{C}(\bar\Omega)$, where
\begin{equation}
\label{2.1}
  \mathcal{L}:=-\mathrm{div}(A\nabla\cdot)+C
\end{equation}
for some $A\in\mathcal{M}_N^\mathrm{sym}(\mathcal{C}(\bar\Omega))$ and $C\in\mathcal{C}(\bar\Omega)$. Consider $\beta\in\mathcal{C}(\partial\Omega)$ and let $\mathcal{D}$, $\mathcal{R}$ be boundary operators on $\partial\Omega$ defined by
\begin{equation*}
\left\{
\begin{array}{l}
\mathcal{D}w=w,\\[5pt]
\mathcal{R}w=\langle \nabla w, A\mathrm{n}\rangle+\beta w,
\end{array}
\right.\;\; w\in W^{2,\t}(\Omega).
\end{equation*}
Then, for every $g\in\mathcal{C}^1(\mathbb{R})$ the next identity holds
\begin{equation}
\label{2.2}
\int_\Omega g\left(\frac{v}{u}\right)[u\mathcal{L}v-v\mathcal{L}u]=\int_\Omega u^2 g'\left(\frac{v}{u}\right)\langle \nabla\frac{v}{u},A\nabla\frac{v}{u}\rangle-\int_{\partial\Omega}g\left(\frac{v}{u}\right)[\mathcal{D}u\mathcal{R}v-\mathcal{D}v\mathcal{R}u].
\end{equation}
\end{theorem}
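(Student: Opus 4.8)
The plan is to read \eqref{2.2} as the integral form of an elementary pointwise identity, obtained via the Leibniz rule, the symmetry of $A$, and the divergence theorem. Write $w:=v/u$. First I would settle the regularity. Since $\theta>N$ and $\partial\Omega$ is of class $\mathcal{C}^{2}$, the Sobolev embedding yields $u,v\in\mathcal{C}^{1}(\bar\Omega)$, so $A\nabla u,A\nabla v\in\mathcal{C}(\bar\Omega;\mathbb{R}^N)$; and since $\mathcal{L}u,\mathcal{L}v,C\in\mathcal{C}(\bar\Omega)$, the distributional divergences $\mathrm{div}(A\nabla u)=Cu-\mathcal{L}u$ and $\mathrm{div}(A\nabla v)=Cv-\mathcal{L}v$ belong to $\mathcal{C}(\bar\Omega)$. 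The algebraic cornerstone is that, wherever $u\neq0$, differentiating $v=uw$ gives $u\nabla v-v\nabla u=u^{2}\nabla w$, hence, applying the (symmetric) matrix $A$,
\[
\Psi:=uA\nabla v-vA\nabla u=u^{2}A\nabla w .
\]
Both sides are continuous on $\bar\Omega$ and coincide on $\{u\neq0\}$, which is dense in $\bar\Omega$ since the hypothesis $w\in\mathcal{C}^{1}(\bar\Omega)$ entails that $\{u=0\}$ has empty interior; thus $\Psi=u^{2}A\nabla w$ on all of $\bar\Omega$, and $\Psi\in\mathcal{C}(\bar\Omega;\mathbb{R}^N)$.

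Next I would establish the pointwise identity. Using the Leibniz rule, the symmetry of $A$ (which identifies the cross terms $\nabla u\cdot A\nabla v$ and $\nabla v\cdot A\nabla u$), and the cancellation $uCv-vCu=0$, one gets, in the distributional sense,
\[
u\mathcal{L}v-v\mathcal{L}u=-\big(u\,\mathrm{div}(A\nabla v)-v\,\mathrm{div}(A\nabla u)\big)=-\mathrm{div}\Psi\in\mathcal{C}(\bar\Omega).
\]
Multiplying by $g(w)\in\mathcal{C}^{1}(\bar\Omega)$ and applying the Leibniz rule once more, together with $\nabla(g(w))=g'(w)\nabla w$ and $\langle\nabla w,\Psi\rangle=u^{2}\langle\nabla w,A\nabla w\rangle$, yields
\[
g(w)\,[u\mathcal{L}v-v\mathcal{L}u]=-\mathrm{div}\big(g(w)\Psi\big)+u^{2}g'(w)\langle\nabla w,A\nabla w\rangle,
\]
again an identity between continuous functions on $\bar\Omega$.

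It then remains to integrate this over $\Omega$ and to discharge the divergence term by the Gauss--Green formula applied to the continuous field $g(w)\Psi$, whose distributional divergence is continuous: since $\Omega$ is of class $\mathcal{C}^{2}$ this is legitimate, and produces $\int_{\partial\Omega}g(w)\langle\Psi,\mathrm{n}\rangle$. On $\partial\Omega$, the symmetry of $A$ gives $\langle\Psi,\mathrm{n}\rangle=u\langle\nabla v,A\mathrm{n}\rangle-v\langle\nabla u,A\mathrm{n}\rangle$; adding and subtracting the boundary potential $\beta$, whose contributions cancel because $u(\beta v)-v(\beta u)=0$, this is precisely $\mathcal{D}u\,\mathcal{R}v-\mathcal{D}v\,\mathcal{R}u$. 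Collecting the three displays gives \eqref{2.2}.

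I expect the only genuinely delicate point to be the justification of the divergence theorem at this low regularity: $A$ is merely continuous, so $g(w)\Psi$ need not be $\mathcal{C}^{1}$ even in the interior, and the classical statement does not apply verbatim. I would handle this by a routine approximation — mollifying $g(w)\Psi$ on a nested family of interior subdomains $\Omega_{\varepsilon}\uparrow\Omega$ with smooth boundaries approaching $\partial\Omega$ (available since $\partial\Omega$ is of class $\mathcal{C}^{2}$), applying the classical divergence theorem on each $\Omega_{\varepsilon}$, and letting $\varepsilon\downarrow0$ using the uniform continuity of $g(w)\Psi$ and of $\mathrm{div}(g(w)\Psi)$ on $\bar\Omega$ and the convergence of the approximating boundaries and normals — or, alternatively, by first proving \eqref{2.2} for smooth $A$, $u$, $v$ and then passing to the limit. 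Everything else is bookkeeping with the Leibniz rule and the symmetry of $A$.
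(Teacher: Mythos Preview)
Your proof is correct and follows essentially the same route as the paper's: both compute the pointwise identity
\[
g\!\left(\tfrac{v}{u}\right)[u\mathcal{L}v-v\mathcal{L}u]=\mathrm{div}\!\left[g\!\left(\tfrac{v}{u}\right)(vA\nabla u-uA\nabla v)\right]+u^{2}g'\!\left(\tfrac{v}{u}\right)\langle\nabla\tfrac{v}{u},A\nabla\tfrac{v}{u}\rangle
\]
via the Leibniz rule and the symmetry of $A$, integrate, apply the divergence theorem, and then insert and cancel the $\beta$-term to recognise $\mathcal{D}u\,\mathcal{R}v-\mathcal{D}v\,\mathcal{R}u$ on $\partial\Omega$. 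The only difference is one of presentation: you are more careful about the regularity bookkeeping (in particular, you flag the justification of Gauss--Green for a merely continuous field with continuous distributional divergence), whereas the paper simply writes ``integrating by parts'' without further comment. Your detour through the set $\{u=0\}$ is in fact unnecessary, since once $w:=v/u\in\mathcal{C}^{1}(\bar\Omega)$ is given, the relation $v=uw$ yields $u\nabla v-v\nabla u=u^{2}\nabla w$ directly on all of $\bar\Omega$ without any density argument.
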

\begin{proof}
Expanding the integrand on the left hand side and using the symmetry of $A$ yields
\begin{align*}
g\left(\frac{v}{u}\right)[u\mathcal{L}v-v\mathcal{L}u]
&=g\left(\frac{v}{u}\right)[v\,\mathrm{div}(A\nabla u)-u\,\mathrm{div}(A\nabla v)]\\
&=g\left(\frac{v}{u}\right)\mathrm{div}(v A\nabla u - u A\nabla v)\\
&=\mathrm{div}\left[g\left(\frac{v}{u}\right)(vA\nabla u-u A \nabla v)\right]-\langle \nabla g\left(\frac{v}{u}\right),v A\nabla u-u A\nabla v\rangle\\
&=\mathrm{div}\left[g\left(\frac{v}{u}\right)(v A \nabla u-u A \nabla v)\right]-g'\left(\frac{v}{u}\right)\langle \nabla\frac{v}{u},A(v\nabla u-u\nabla v)\rangle\\
&=\mathrm{div}\left[g\left(\frac{v}{u}\right)(v A \nabla u-uA\nabla v)\right]+u^2 g'\left(\frac{v}{u}\right)\langle \nabla \frac{v}{u},A\nabla\frac{v}{u}\rangle.
\end{align*}
Thus, integrating in $\Omega$, we find that
\[
\int_\Omega g\left(\frac{v}{u}\right)[u\mathcal{L}v-v\mathcal{L}u]  =\int_\Omega \mathrm{div}\left[g\left(\frac{v}{u}\right)(v A \nabla u-uA\nabla v)\right] + \int_\Omega u^2 g'\left(\frac{v}{u}\right)\langle \nabla\frac{v}{u},A\nabla\frac{v}{u}\rangle.
\]
As integrating by parts shows that
\begin{align*}
\int_\Omega\mathrm{div}\Big[g\left(\frac{v}{u}\right)(v A\nabla u & -u A \nabla v)\Big]
= \int_{\partial\Omega}g\left(\frac{v}{u}\right)\langle v A\nabla u-u A \nabla v,\mathrm{n}\rangle\\
&= \int_{\partial\Omega}g\left(\frac{v}{u}\right)\left[v\left(\langle A\nabla u,\mathrm{n}\rangle+\beta u\right)-u\left(\langle A\nabla v,\mathrm{n}\rangle+\beta v\right)\right]\\
&=-\int_{\partial\Omega}g\left(\frac{v}{u}\right)[\mathcal{D}u\mathcal{R}v-\mathcal{D}v\mathcal{R}u],
\end{align*}
the identity \eqref{2.2} holds.
\end{proof}

Since the symmetric  matrix $A(x)$ is not required to be positive definite, the second order differential operator $\mathcal{L}$ defined in \eqref{2.1} might not be of elliptic type. This is  a real novelty with respect to all previous existing results.

\section{Applications to a general class of superlinear indefinite problems}
\label{sec-3}

\noindent In this section we use Theorem \ref{th2.1} to study the existence of positive solutions of the superlinear indefinite problem
\begin{equation}
\label{3.1}
\left\{
\begin{array}{ll}
\mathcal{L} u=\lambda u-a(x)f(u) & \hbox{in}\;\;\Omega,\\
\mathcal{B}u=0 & \hbox{on}\;\;\partial\Omega,
\end{array}
\right.
\end{equation}
where $\Omega$ is an open bounded subset of $\mathbb{R}^N$, $N\geq 1$, of class $\mathcal{C}^2$, $a\in\mathcal{C}(\bar\Omega)$ is allowed to change sign, $\lambda\in\mathbb{R}$ is a parameter, $f\in\mathcal{C}(\mathbb{R})\setminus\{0\}$ with $f(0)=0$, and $\mathcal{L}$ is an uniformly elliptic operator in divergence form, like \eqref{2.1}, with $A\in\mathcal{M}_N^\mathrm{sym}(\mathcal{C}^1(\bar\Omega))$ and $C\in\mathcal{C}(\bar\Omega)$. As far as concerns the boundary of $\Omega$, $\partial\Omega$, we are assuming that $\partial\Omega=\Gamma_{\mathcal{D}}\cup\Gamma_{\mathcal{R}}$, where $\Gamma_{\mathcal{D}}$ and $\Gamma_{\mathcal{R}}$ are two disjoint closed and open subsets of $\partial \Omega$ associated with the mixed boundary operator defined by
\begin{equation*}
\mathcal{B}:=
\left\{
\begin{array}{ll}
\mathcal{D}=Id & \hbox{on}\;\;\Gamma_{\mathcal{D}},\\
\mathcal{R}=\langle \nabla \cdot,A\mathrm{n}\rangle+\beta & \hbox{on}\;\;\Gamma_{\mathcal{R}},
\end{array}
\right.
\end{equation*}
for some $\beta\in\mathcal{C}(\Gamma_{\mathcal{R}})$, where $\mathrm{n}$ stands for the outward unit normal vector field along $\partial\Omega$. As $\beta$ might change of sign, this boundary operator is of general mixed non-classical type.
\par
By \cite[Th. 7.7]{LG13}, under these general conditions,  the linear eigenvalue problem
\begin{equation}
\label{3.2}
   \left\{ \begin{array}{ll} \mathcal{L} \varphi = \sigma \varphi &\quad \hbox{in}\;\;\Omega,\\
   \mathcal{B}\varphi =0 & \quad \hbox{on}\;\;\partial\Omega,\end{array}\right.
\end{equation}
possesses a unique \emph{principal eigenvalue},
\[
  \s_0:=\sigma[\mathcal{L},\mathcal{B},\Omega],
\]
which is algebraically simple and strictly dominant. By a principal eigenvalue it is meant an eigenvalue associated with it there is a positive eigenfunction
$\varphi_0$. In such case, $\varphi_0\gg 0$ in the sense that
\begin{equation}
\label{3.3}
  \varphi_0(x)>0 \;\;\hbox{for all}\;\; x\in\Omega \cup \Gamma_{\mathcal{R}} \;\;\hbox{and}\;\;
  \frac{\partial\varphi_0}{\partial \mathrm{n}}(x)<0\;\;\hbox{for all}\;\; x \in \Gamma_{\mathcal{D}}.
\end{equation}
According to \cite[Ch. 5]{LG13}, $\varphi_0\in W^{2,\t}_{\mathcal{B}}(\Omega)$ for all $\t>N$, where
\begin{equation}
\label{3.4}
   W^{2,\t}_{\mathcal{B}}(\Omega) :=\{u\in  W^{2,\t}(\Omega)\;:\;\; \mathcal{B}u =0\;\;\hbox{on}\;\;\partial \Omega\}.
\end{equation}
Thus, $\varphi_0\in \mathcal{C}^1_{\mathcal{B}}(\bar \Omega)\cap \mathcal{C}^{1,\nu}(\Omega)$ for all $\nu<1$ and it is almost everywhere twice differentiable in $\Omega$,  much like the weak positive solutions of \eqref{3.1}.
\par
The following result establishes the existence of a curve of positive solutions, $(\lambda,u)$, of
\eqref{3.1} emanating from $u=0$ as  $\lambda$ crosses $\s_0$. It  is a straightforward application of the main theorem of M. G. Crandall \&  P. H. Rabinowitz \cite{CR71} based on the fact that $\s_0$ is algebraically simple.

\begin{theorem}
\label{th3.1}
Assume that $f$ is of class $\mathcal{C}^r$, $r\ge 2$, in a neighborhood of zero and $f(0)=f'(0)=0$. Let $\varphi_0\in W^{2,\t}_{\mathcal{B}}(\Omega)$ be the principal eigenfunction associated with $\s_0$ normalized so that
\begin{equation*}
  \int_\Omega \varphi_0^2(x)\,dx =1.
\end{equation*}
Then, there exist $\varepsilon >0$ and two maps of class $\mathcal{C}^{r-1}$,
\begin{equation*}
\mu:(-\varepsilon,\varepsilon)\to\mathbb{R},\quad y:(-\varepsilon,\varepsilon)\to W^{2,\t}_{\mathcal{B}}(\Omega),
\end{equation*}
such that $\mu(0)=0$, $y(0)=0$, $\int_{\Omega}y(s)\varphi_0=0$ for all $s\in(-\varepsilon,\varepsilon)$, and
\begin{equation}
\label{3.5}
(\lambda(s),u(s)):=(\s_0+\mu(s),s(\varphi_0+y(s)))
\end{equation}
solves \eqref{3.1} for every $s\in (-\varepsilon,\varepsilon)$. Moreover, there exists a neighborhood of $(\s_0,0)$ in $\mathbb{R}\times W^{2,\t}_{\mathcal{B}}(\Omega)$, $\mathcal{U}$, such that, for any solution $(\lambda,u)\in \mathcal{U}$ of \eqref{3.1}, either $u=0$, or there exists $s\in (-\varepsilon,\varepsilon)$ such that $(\lambda,u)=(\lambda(s),u(s))$.
\end{theorem}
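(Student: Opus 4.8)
The plan is to cast \eqref{3.1} as an abstract equation $\mathfrak{F}(\lambda,u)=0$ between two Banach spaces and to apply the bifurcation-from-a-simple-eigenvalue theorem of M.~G.~Crandall \& P.~H.~Rabinowitz \cite{CR71} at the point $(\s_0,0)$.

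\emph{Setting up the equation.} Fix $\theta>N$ and put $X:=W^{2,\theta}_{\mathcal{B}}(\Omega)$, $Y:=L^\theta(\Omega)$. After replacing $f$, if necessary, by a $\mathcal{C}^r$ extension to all of $\mathbb{R}$ which agrees with it near $u=0$ (harmless, since bifurcation is a local matter), define
\[
 \mathfrak{F}:\mathbb{R}\times X\longrightarrow Y,\qquad \mathfrak{F}(\lambda,u):=\mathcal{L}u-\lambda u+a(x)f(u).
\]
Since $\theta>N$, the Sobolev embedding $W^{2,\theta}(\Omega)\hookrightarrow\mathcal{C}^1(\bar\Omega)$ holds, so the superposition operator $u\mapsto a\,f(u)$ is of class $\mathcal{C}^r$ from $X$ into $\mathcal{C}(\bar\Omega)\hookrightarrow Y$; as $\mathcal{L}$ is a bounded linear operator from $X$ to $Y$, the map $\mathfrak{F}$ is of class $\mathcal{C}^r$. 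Because $f(0)=0$, $\mathfrak{F}(\lambda,0)=0$ for every $\lambda\in\mathbb{R}$, so that $(\lambda,0)$ is a solution of \eqref{3.1} for all $\lambda$.

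\emph{Checking the hypotheses of \cite{CR71}.} Differentiating along the trivial branch and using $f'(0)=0$ yields $D_u\mathfrak{F}(\lambda,0)=\mathcal{L}-\lambda\,\mathrm{Id}$, a bounded operator from $X$ to $Y$. By \cite[Th.~7.7]{LG13} the principal eigenvalue $\s_0=\sigma[\mathcal{L},\mathcal{B},\Omega]$ is algebraically simple; hence $\mathcal{L}-\s_0$ is Fredholm of index zero with $N[\mathcal{L}-\s_0]=\mathrm{span}\,[\varphi_0]$ and $Y=\mathrm{span}\,[\varphi_0]\oplus R[\mathcal{L}-\s_0]$, the range having codimension one. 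The transversality condition $D_\lambda D_u\mathfrak{F}(\s_0,0)\,\varphi_0\notin R[\mathcal{L}-\s_0]$ reduces, since $D_\lambda D_u\mathfrak{F}(\s_0,0)\,\varphi_0=-\varphi_0$, to $\varphi_0\notin R[\mathcal{L}-\s_0]$, which is precisely the algebraic (not merely geometric) simplicity of $\s_0$: were $(\mathcal{L}-\s_0)w=\varphi_0$ solvable in $X$, then $\varphi_0$ would be a generalized eigenfunction, contradicting \cite[Th.~7.7]{LG13}. Equivalently, $R[\mathcal{L}-\s_0]=\{y\in Y:\int_\Omega y\,\varphi_0^*=0\}$ for the positive principal eigenfunction $\varphi_0^*$ of the adjoint problem, and $\int_\Omega\varphi_0\varphi_0^*>0$.

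\emph{Conclusion.} All the assumptions of the Crandall--Rabinowitz theorem being in force and $\mathfrak{F}$ being of class $\mathcal{C}^r$ with $r\ge2$, there exist $\varepsilon>0$ and maps of class $\mathcal{C}^{r-1}$, $\mu:(-\varepsilon,\varepsilon)\to\mathbb{R}$ and $y:(-\varepsilon,\varepsilon)\to Z$, where $Z:=\{u\in X:\int_\Omega u\,\varphi_0=0\}$ is the topological complement of $\mathrm{span}\,[\varphi_0]$ in $X$ singled out by the normalization $\int_\Omega\varphi_0^2=1$, with $\mu(0)=0$, $y(0)=0$ and $\int_\Omega y(s)\varphi_0=0$ for all $s$ (the latter because $y(s)\in Z$), such that $(\lambda(s),u(s)):=(\s_0+\mu(s),s(\varphi_0+y(s)))$ solves \eqref{3.1} for every $s\in(-\varepsilon,\varepsilon)$; in particular $\lambda(0)=\s_0$ and $u(0)=0$, so the curve emanates from $u=0$ as $\lambda$ crosses $\s_0$. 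The local uniqueness part of \cite{CR71} provides a neighborhood $\mathcal{U}$ of $(\s_0,0)$ in $\mathbb{R}\times X$ such that every solution $(\lambda,u)\in\mathcal{U}$ of \eqref{3.1} is either $u=0$ or of the form $(\lambda(s),u(s))$ for some $s\in(-\varepsilon,\varepsilon)$. The drop from $\mathcal{C}^r$ to $\mathcal{C}^{r-1}$ is the usual one stemming from the rescaling $u=s(\varphi_0+y)$ and the subsequent division by $s$ in the proof of \cite{CR71}. The only point requiring genuine care is the algebraic simplicity of $\s_0$ under the present non-classical mixed boundary conditions, which is furnished by \cite[Th.~7.7]{LG13}; the rest is routine.
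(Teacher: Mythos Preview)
Your proof is correct and follows the same overall strategy as the paper: both verify the hypotheses of the Crandall--Rabinowitz theorem \cite{CR71} at $(\sigma_0,0)$, using the algebraic simplicity of $\sigma_0$ from \cite[Th.~7.7]{LG13} for the kernel condition and for transversality.

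The only difference is in the formulation of $\mathfrak{F}$. You work directly with $\mathfrak{F}(\lambda,u)=\mathcal{L}u-\lambda u+a f(u)$ as a map $W^{2,\theta}_{\mathcal{B}}(\Omega)\to L^\theta(\Omega)$, relying on elliptic Fredholm theory for $\mathcal{L}-\sigma_0$; the paper instead picks $\omega>-\sigma_0$, inverts $\mathcal{L}+\omega$, and writes $\mathfrak{F}(\lambda,u)=u-(\mathcal{L}+\omega)^{-1}[(\lambda+\omega)u-a f(u)]$ as a compact perturbation of the identity, which makes the Fredholm-of-index-zero property automatic without appealing to separate regularity results. The paper then checks transversality by showing $(\mathcal{L}+\omega)^{-1}\varphi_0\notin\mathrm{Im}\,\mathfrak{L}(\sigma_0)$ via \cite[Th.~7.8]{LG13}, whereas you argue it directly from algebraic simplicity and the Fredholm alternative. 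Both routes are standard and yield the same conclusion; the paper's resolvent formulation has the minor advantage of packaging the Fredholm property for free, while yours is slightly more direct.
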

\begin{proof}
Let $\omega>0$ be such that $\o >-\s_0$. Then,
\[
  \sigma[\mathcal{L}+\omega,\mathcal{B},\Omega]=\s_0+\omega>0.
\]
Hence, the solutions of \eqref{3.1} are given by the zeroes of the operator
\[
  \mathfrak{F}(\lambda,u):=u-(\mathcal{L}+\o)^{-1}[(\l+\o)u-a(x)f(u)],\qquad
  (\l,u)\in \R \times L^\t(\O),
\]
which is a compact perturbation of the identity map of class $\mathcal{C}^r$; in particular, it is Fredholm of index zero. We have that $\mathfrak{F}(\l,0)=0$ for all $\l\in\R$. Moreover, the Fr\'{e}ch\`{e}t differential $\mathfrak{L}(\l):= D_u\mathfrak{F}(\l,0)$ is given by
\[
  D_u\mathfrak{F}(\l,0) u= u-(\mathcal{L}+\o)^{-1}[(\l+\o)u].
\]
Thus, it is apparent that $\mathfrak{L}(\l)$ is an
isomorphism if $\l$ is not an eigenvalue of \eqref{3.2}. Furthermore,
\[
  \mathrm{Ker\,}D_u\mathfrak{F}(\s_0,0) =\mathrm{span\,}[\v_0]
\]
and the next transversality condition holds
\[
  \mathfrak{L}'(\s_0)\v_0 = -(\mathcal{L}+\o)^{-1}\v_0 \notin \mathrm{Im\,}\mathfrak{L}(\sigma_0).
\]
On the contrary, assume that, for some $u\in W^{2,\t}_{\mathcal{B}}(\Omega)$,
\[
  u-(\mathcal{L}+\o)^{-1}[(\s_0+\o)u] = -(\mathcal{L}+\o)^{-1}\v_0.
\]
Then,
\[
  \left\{ \begin{array}{ll} (\mathcal{L}-\s_0)u=-\v_0 & \quad \hbox{in}\;\;\O,\\ \mathcal{B}u=0&\quad
  \hbox{on}\;\;\partial\O,\end{array}\right.
\]
which contradicts \cite[Th. 7.8]{LG13}. Therefore, the desired result follows by applying
\cite[Th. 2.2.1]{LG01}, which is the main theorem of \cite{CR71}, with
\[
  Y:= \left\{y\in W^{2,\t}_{\mathcal{B}}(\O)\;:\; \int_\O \v_0(x)y(x)\,dx=0\right\}.
\]
The proof is complete.
\end{proof}

As a consequence of the definition of $u(s)$, we have that $u'(0)=\v_0\gg 0$. Hence, $\e$ can be shortened, if necessary, so that
\[
  u'(s):=\frac{du}{ds}(s)\gg 0\quad \hbox{for all} \;\; s\in (-\e,\e).
\]
Moreover, $u(s)\gg 0$ if $s\in (0,\e)$, while $u(s)\ll 0$ if $s\in (-\e,0)$, and the next result holds.

\begin{proposition}
\label{Prop3.2}
Under the same assumptions of Theorem \ref{th3.1}, the following assertions are true:
\begin{enumerate}[{\rm (i)}]
\item\label{Prop3.2.item1}  For every $p\ge 1$,
\begin{equation}
\label{3.6}
\lim_{s\to 0^{\pm}}\frac{\l(s)-\s_0}{|s|^{p-1}}=\lim_{s\to 0^{\pm}}\frac{f(s)}{s|s|^{p-1}}\int_{\Omega} a(x) \varphi_0^{p+1}(x)\,dx
\end{equation}
if the  limit on the right hand side exists.

\item\label{Prop3.2.item2} If $\l'(s)u(s)>0$ for some $s\in(-\e,\e)$, then $u(s)$ is linearly stable as a steady-state solution of the parabolic problem
\begin{equation}
\label{3.7}
\left\{
\begin{array}{ll}
\frac{\p u}{\p t} + \mathcal{L} u=\lambda u-a(x)f(u) & (x,t)\in \Omega\times (0,+\infty),\\
\mathcal{B}u=0 & (x,t)\in \p\O \times (0,+\infty),\\ u(x,0)=u_0(x)\geq 0 & x\in\O.
\end{array}
\right.
\end{equation}
In other words,
\begin{equation}
\label{3.8}
  \s[\mathcal{L}+a(x)f'(u(s))-\l(s),\mathcal{B},\O]>0.
\end{equation}

\end{enumerate}
\end{proposition}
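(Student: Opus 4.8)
The plan is to prove the two assertions separately; both rest on testing the equation in \eqref{3.1} against the principal eigenfunction $\varphi_0$, which is where the generalized Picone identity enters. For part (i), integrating by parts twice, using the symmetry of $A$ and the boundary conditions $\mathcal{B}\varphi_0=\mathcal{B}u(s)=0$ (equivalently, Theorem \ref{th2.1} with $g\equiv 1$, $u=\varphi_0$, $v=u(s)$), together with $\mathcal{L}\varphi_0=\sigma_0\varphi_0$, one gets
\[
  \int_\Omega\varphi_0\,\mathcal{L}u(s)=\int_\Omega u(s)\,\mathcal{L}\varphi_0=\sigma_0\int_\Omega u(s)\,\varphi_0 .
\]
Multiplying the differential equation satisfied by $(\lambda(s),u(s))$ by $\varphi_0$ and integrating over $\Omega$ therefore yields $(\lambda(s)-\sigma_0)\int_\Omega u(s)\varphi_0=\int_\Omega a\,\varphi_0\,f(u(s))$. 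Since $u(s)=s(\varphi_0+y(s))$, $\int_\Omega\varphi_0^2=1$ and $\int_\Omega y(s)\varphi_0=0$, we have $\int_\Omega u(s)\varphi_0=s$; writing $w_s:=\varphi_0+y(s)$ and dividing by $s|s|^{p-1}$,
\[
  \frac{\lambda(s)-\sigma_0}{|s|^{p-1}}
  =\int_\Omega a(x)\,\varphi_0(x)\,w_s(x)^{p}\,
   \frac{f(s\,w_s(x))}{s\,w_s(x)\,|s\,w_s(x)|^{p-1}}\,dx .
\]

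It then remains to pass to the limit $s\to 0^{\pm}$ under the integral sign. Here I would use that $y(s)\to 0$ uniformly on $\bar\Omega$ as $s\to 0$ (continuity of $y$ with $y(0)=0$ and $W^{2,\t}(\Omega)\hookrightarrow\mathcal{C}(\bar\Omega)$), hence $w_s\to\varphi_0$ uniformly, and that $w_s>0$ pointwise in $\Omega$ for $|s|$ small (because $u(s)\gg 0$ for $s>0$ and $u(s)\ll 0$ for $s<0$). Thus at each $x\in\Omega$ the quotient in the integrand is $f(\tau)/(\tau|\tau|^{p-1})$ with $\tau=s\,w_s(x)$ having the sign of $s$, so it converges to $\ell_\pm:=\lim_{\tau\to 0^{\pm}}f(\tau)/(\tau|\tau|^{p-1})$ (assumed to exist), while $w_s(x)^p\to\varphi_0(x)^p$; the integrand therefore tends pointwise to $\ell_\pm\,a(x)\varphi_0^{p+1}(x)$. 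Since the quotient is bounded near $\tau=0$ and all remaining factors are bounded uniformly in $x$ and in small $s$, dominated convergence on the bounded domain $\Omega$ gives $\ell_\pm\int_\Omega a\,\varphi_0^{p+1}$, which is the right-hand side of \eqref{3.6}.

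For part (ii), since $s\mapsto(\lambda(s),u(s))$ is of class $\mathcal{C}^{r-1}$ with $r\ge 2$ and $u(s)$ ranges in the neighborhood of $0$ where $f\in\mathcal{C}^r$, I would differentiate the steady-state equation $\mathcal{L}u(s)=\lambda(s)u(s)-a(x)f(u(s))$ and the boundary condition $\mathcal{B}u(s)=0$ with respect to $s$, obtaining
\[
  \big[\mathcal{L}+a(x)f'(u(s))-\lambda(s)\big]u'(s)=\lambda'(s)\,u(s)\ \text{ in }\Omega,
  \qquad \mathcal{B}u'(s)=0\ \text{ on }\partial\Omega .
\]
Recalling that $u'(s)\gg 0$ for every $s\in(-\e,\e)$ and using the hypothesis $\lambda'(s)u(s)>0$, this exhibits $u'(s)$ as a positive strict supersolution of $\big(\mathcal{L}+a(x)f'(u(s))-\lambda(s),\mathcal{B},\Omega\big)$. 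The characterization of the positivity of the principal eigenvalue in terms of the existence of a positive strict supersolution (see, e.g., \cite[Ch.~7]{LG13}) then yields \eqref{3.8}, i.e.\ the linear stability of $u(s)$ as a steady state of \eqref{3.7}.

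The only genuinely delicate point is the interchange of limit and integral in part (i): it hinges on the local boundedness of $\tau\mapsto f(\tau)/(\tau|\tau|^{p-1})$ near $\tau=0$ (a consequence of the assumed one-sided limits) and on the uniform smallness of $y(s)$, which together keep the integrand uniformly bounded on the bounded set $\Omega$. Granting that, both parts are short; part (ii), in particular, is an immediate consequence of the classical supersolution characterization of the maximum principle once the equation for $u'(s)$ has been written down.
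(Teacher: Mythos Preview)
Your proof is correct and follows essentially the same route as the paper's: for part (i) you multiply the equation by $\varphi_0$, integrate, use self-adjointness of $(\mathcal{L},\mathcal{B})$ together with $\int_\Omega u(s)\varphi_0=s$, and pass to the limit by dominated convergence, arriving at the very identity the paper obtains (its \eqref{3.9} reduces to your formula since $\int_\Omega\varphi_0(\varphi_0+y(s))=1$); for part (ii) both you and the paper differentiate in $s$ and invoke the positive-supersolution characterization from \cite[Th.~7.10]{LG13}.
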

\begin{proof}
Substituting \eqref{3.5}  in \eqref{3.1} we are driven to
\begin{equation*}
s\mathcal{L}(\varphi_0 + y(s))=s (\s_0  + \mu(s)) (\varphi_0+y(s))-a f(s(\varphi_0+y(s))).
\end{equation*}
Thus,
\begin{equation*}
s(\mathcal{L}-\s_0)y(s)=s \mu(s) (\varphi_0+y(s))-a f(s(\varphi_0+y(s))).
\end{equation*}
Hence, for any given $p\geq 1$, multiplying both sides of this identity  by $\frac{\varphi_0}{s|s|^{p-1}}$, $s\neq 0$, it is apparent that
\begin{equation*}
\frac{1}{|s|^{p-1}}\varphi_0(\mathcal{L}-\s_0) y(s)=\frac{\mu(s)}{|s|^{p-1}} \varphi_0(\varphi_0+y(s))-a \varphi_0 \frac{f(s(\varphi_0+y(s)))}{s|s|^{p-1}}.
\end{equation*}
Therefore, since
\[
  \int_\O \v_0 (\mathcal{L}-\s_0) y(s) = \int_\O y(s)(\mathcal{L}-\s_0)\v_0=0,
\]
we find that
\begin{equation}
\label{3.9}
0= \frac{\mu(s)}{|s|^{p-1}}\int_{\Omega}\varphi_0(\varphi_0+y(s))-\int_{\Omega} a \varphi_0  (\varphi_0+y(s))^p\frac{f(s(\varphi_0+y(s)))}{s|s|^{p-1}(\varphi_0+y(s))^p}.
\end{equation}
The identity \eqref{3.6} follows from Lebesgue's dominated convergence theorem by letting $s\to 0$ in
\eqref{3.9} provided $p\geq 1$ satisfy
\begin{equation*}
\lim_{s\to 0^{\pm}}\frac{f(s)}{s|s|^{p-1}}\in \mathbb{R}.
\end{equation*}
Finally, differentiating with respect to $s$ the identity
\[
  \mathfrak{F}(\l(s),u(s))=0,\qquad s\in (-\e,\e),
\]
inverting $(\mathcal{L}+\o)^{-1}$ and rearranging terms, it becomes apparent that
\begin{equation*}
(\mathcal{L}-\lambda(s)+af'(u(s)))u'(s)=\lambda'(s)u(s),\quad s\in (-\e,\e).
\end{equation*}
Since shortening $\e$, we can assume that $u'(s)\gg 0$ for all $s\in(-\e,\e)$, it follows from
\cite[Th.7.10]{LG13} that $\lambda'(s)u(s)>0$ implies \eqref{3.8}, ending the proof.
\end{proof}

It should be noted that \eqref{3.6} provides us with the sign of $\mu(s)=\l(s)-\s_0$ and hence, the bifurcation
direction of the curve of positive solutions, $(\l(s),u(s))$, $s>0$,  in terms of the behavior of
$f(u)$ at $u=0$ and the sign of the integral
\[
\int_{\Omega} a(x) \varphi_0^{p+1}(x)\,dx.
\]
However, as we are applying Theorem \ref{th3.1}, $f$ is required to be of class $\mathcal{C}^2$ regularity. In particular, the next result holds.

\begin{corollary}
\label{co3.3}
Under the same assumptions of Theorem \ref{th3.1}, suppose that in addition $f(u):=u|u|^{p-1}$ for some $p\ge 2$. Then,
\begin{equation*}
\lim_{s\to 0^{\pm}} \frac{\l(s)-\s_0}{|s|^{r-1}}=0\;\;\hbox{for all}\;\;r\in[1,p)\;\; \hbox{and}\;\;
\lim_{s\to 0^{\pm}} \frac{\l(s)-\s_0}{|s|^{p-1}}=\int_\Omega a(x)\varphi_0^{p+1}(x)\,dx.
\end{equation*}
Thus, the bifurcation to positive solutions is supercritical if $\int_\Omega a(x)\varphi_0^{p+1}(x)\,dx>0$, while it is subcritical, if $\int_\Omega a(x)\varphi_0^{p+1}(x)\,dx<0$.
\end{corollary}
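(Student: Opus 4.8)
The plan is to derive the whole statement straight from the asymptotic formula \eqref{3.6} of Proposition \ref{Prop3.2}. First I would check that $f(u)=u|u|^{p-1}$ with $p\ge 2$ fits the framework of Theorem \ref{th3.1}: indeed $f(0)=0$, $f$ is of class $\mathcal{C}^{r}$ with $r\ge 2$ in a neighbourhood of the origin, and, since $|f(u)|=|u|^{p}$ with $p>1$, also $f'(0)=0$. Hence Proposition \ref{Prop3.2} applies, and, taking the free exponent in \eqref{3.6} to be a number $r\ge 1$, that formula reads
\[
  \lim_{s\to 0^{\pm}}\frac{\l(s)-\s_0}{|s|^{r-1}}
  =\Big(\lim_{s\to 0^{\pm}}\frac{f(s)}{s|s|^{r-1}}\Big)\int_{\Omega}a(x)\varphi_0^{r+1}(x)\,dx,
\]
valid whenever the limit on the right hand side is finite.

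The only computation needed is that of this limit for the present nonlinearity. For every $r\ge 1$ and every $s\ne 0$ one has
\[
  \frac{f(s)}{s|s|^{r-1}}=\frac{s|s|^{p-1}}{s|s|^{r-1}}=|s|^{\,p-r},
\]
so the limit as $s\to 0^{\pm}$ equals $0$ when $1\le r<p$ (because $p-r>0$) and equals $1$ when $r=p$; in both cases it exists. Plugging this into the displayed identity yields $\lim_{s\to 0^{\pm}}\frac{\l(s)-\s_0}{|s|^{r-1}}=0$ for every $r\in[1,p)$ and $\lim_{s\to 0^{\pm}}\frac{\l(s)-\s_0}{|s|^{p-1}}=\int_{\Omega}a(x)\varphi_0^{p+1}(x)\,dx$, which are exactly the two limits asserted.

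Then I would extract the bifurcation direction from the case $r=p$. Setting $I:=\int_{\Omega}a(x)\varphi_0^{p+1}(x)\,dx$ and recalling from \eqref{3.5} that $u(s)\gg 0$ for $s\in(0,\e)$: if $I>0$, then $\frac{\l(s)-\s_0}{|s|^{p-1}}\to I>0$ as $s\to 0^{+}$, so, after shrinking $\e$ if necessary, $\l(s)>\s_0$ for all $s\in(0,\e)$; hence the arc of positive solutions emanating from $(\s_0,0)$ lies in $\{\l>\s_0\}$, i.e. the bifurcation to positive solutions is supercritical. If $I<0$, the same reasoning gives $\l(s)<\s_0$ on $(0,\e)$, so the bifurcation is subcritical.

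There is essentially no obstacle: the argument is a direct substitution into \eqref{3.6}. The only two points deserving a line of care are the preliminary verification that $f(u)=u|u|^{p-1}$ meets the regularity and normalization hypotheses of Theorem \ref{th3.1} (notably $f'(0)=0$), and the simple bookkeeping distinguishing the exponents $r<p$, which give a vanishing limit, from $r=p$, which gives the limit $1$.
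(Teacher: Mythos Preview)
Your proposal is correct and is exactly the argument the paper intends: Corollary~\ref{co3.3} is stated without a separate proof as an immediate specialization of \eqref{3.6} in Proposition~\ref{Prop3.2}, and your single computation $\frac{f(s)}{s|s|^{r-1}}=|s|^{p-r}$ (yielding $0$ for $r<p$ and $1$ for $r=p$) is the only substitution needed. The bifurcation direction then follows, as you note, from the sign of the limit at $r=p$.
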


In the general case when $f$ is merely continuous, one can still use the global bifurcation theorems of \cite[Ch. 6]{LG13} to infer that the set of solutions of \eqref{3.1} possesses a (connected) component, $\mathfrak{C}^+$, of positive solutions which is unbounded in $\R\times \mc{C}(\bar\O)$ and
satisfies $(\s_0,0)\in \mathfrak{\bar C}^+$. But, in this general case, the sharp information
provided by Theorem \ref{th3.1} in a neighborhood of $(\s_0,0)$ is lost.

\subsection{Nonexistence of small positive solutions for $\lambda\ge\s_0$.}
\label{sec-3.1}

\noindent Astonishingly, the next result provides us with a sufficient (optimal) condition so that \eqref{3.1} cannot admit  positive small solutions for $\l \geq \s_0$ even in the general case when $f$ is continuous.

\begin{theorem}
\label{th3.4}
Assume that, for some  $p\geq 1$,
\begin{equation}
\label{3.10}
\lim_{s\da 0} \frac{f(s)}{s^{p}}\int_\Omega a(x)\varphi_0^{p+1}(x)\,dx< 0.
\end{equation}
Then, there exists $\varepsilon>0$ such that $\lambda< \s_0$ if \eqref{3.1} admits a solution, $(\lambda,u)$, with $u \gneq 0$ and $\|u\|_{\infty}<\varepsilon$. In other words, \eqref{3.1} cannot admit small positive solutions if $\l \geq \s_0$.
\end{theorem}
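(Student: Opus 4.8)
The plan is to argue by contradiction: renormalize a hypothetical sequence of ever-smaller positive solutions, show that it converges to the principal eigenfunction $\varphi_0$, and then read off the sign of $\lambda-\s_0$ from the degenerate instance $g\equiv1$ of the Picone identity \eqref{2.2}.

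Suppose the conclusion fails, so that for each $n\ge1$ there is a solution $(\lambda_n,w_n)$ of \eqref{3.1} with $w_n\gneq0$, $\|w_n\|_\infty\to0$ and $\lambda_n\ge\s_0$. Rewriting the equation as $\big(\mathcal L-\lambda_n+a\,f(w_n)/w_n\big)w_n=0$ — a potential bounded because $f(s)/s$ is bounded near the origin under \eqref{3.10}, at least in the substantive range $p\ge2$ where $\lim_{s\da0}f(s)/s=0$ — the strong maximum principle and the Hopf lemma give $w_n>0$ in $\Omega\cup\Gamma_{\mathcal R}$ and $\p_{\mathrm n}w_n<0$ on $\Gamma_{\mathcal D}$, hence $w_n/\varphi_0\in\mathcal C^1(\bar\Omega)$ and Theorem~\ref{th2.1} applies to $(\varphi_0,w_n)$. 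Taking $g\equiv1$, the gradient term drops out and the boundary term vanishes, since $\mathcal D\varphi_0=\mathcal Dw_n=0$ on $\Gamma_{\mathcal D}$ and $\mathcal R\varphi_0=\mathcal Rw_n=0$ on $\Gamma_{\mathcal R}$; feeding in $\mathcal L\varphi_0=\s_0\varphi_0$ and $\mathcal Lw_n=\lambda_nw_n-af(w_n)$, the identity \eqref{2.2} collapses to the fundamental relation
\[
  (\lambda_n-\s_0)\int_\Omega\varphi_0\,w_n=\int_\Omega a\,\varphi_0\,f(w_n),
\]
in which $\int_\Omega\varphi_0 w_n>0$; thus $\lambda_n\ge\s_0$ forces $\int_\Omega a\,\varphi_0 f(w_n)\ge0$, which is what must be contradicted.

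Next I would renormalize by $t_n:=\|w_n\|_{L^2(\Omega)}\to0$, setting $\psi_n:=w_n/t_n$, so $\|\psi_n\|_{L^2}=1$ and $\mathcal L\psi_n=\lambda_n\psi_n-a\,\psi_n\,f(w_n)/w_n$. Boundedness of $f(s)/s$ near $0$ first bounds $\{\lambda_n\}$ through the fundamental relation and then, by the standard $L^\theta$ elliptic estimates and a bootstrap, makes $\{\psi_n\}$ relatively compact in $\mathcal C^1(\bar\Omega)$. Along a subsequence $\psi_n\to\psi_*$ in $\mathcal C^1(\bar\Omega)$, $\lambda_n\to\lambda_*\ge\s_0$, and since $w_n\to0$ uniformly with $p\ge2$ one has $f(w_n)/w_n=w_n^{p-1}f(w_n)/w_n^p\to0$ uniformly, so $\mathcal L\psi_*=\lambda_*\psi_*$, $\mathcal B\psi_*=0$, $\psi_*\ge0$, $\|\psi_*\|_{L^2}=1$. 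By the uniqueness of the principal eigenvalue \cite[Th.~7.7]{LG13}, $\lambda_*=\s_0$ and $\psi_*=\varphi_0$, normalized so that $\int_\Omega\varphi_0^2=1$.

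To conclude, substitute $w_n=t_n\psi_n$ and $f(w_n)/t_n^{\,p}=\psi_n^{\,p}f(w_n)/w_n^{\,p}$ into the fundamental relation, divide by $t_n^{\,p}$, and pass to the limit:
\[
  \frac{\lambda_n-\s_0}{t_n^{\,p-1}}\int_\Omega\varphi_0\,\psi_n=\int_\Omega a\,\varphi_0\,\psi_n^{\,p}\,\frac{f(t_n\psi_n)}{(t_n\psi_n)^{p}}\ \longrightarrow\ \Big(\lim_{s\da0}\tfrac{f(s)}{s^p}\Big)\int_\Omega a\,\varphi_0^{\,p+1}<0,
\]
by dominated convergence (the integrand is uniformly bounded) and \eqref{3.10}, while the left-hand side is $\ge0$ for every $n$ (as $\lambda_n\ge\s_0$, $t_n>0$, $\int_\Omega\varphi_0\psi_n>0$) — a contradiction, so the required $\e>0$ exists. (For $p=1$ one instead uses that the limit problem is $(\mathcal L+ca)\psi_*=\lambda_*\psi_*$ with $c:=\lim_{s\da0}f(s)/s$, whence $\lambda_*=\s[\mathcal L+ca,\mathcal B,\Omega]\le\s_0+c\int_\Omega a\varphi_0^2<\s_0$ by the Rayleigh quotient and \eqref{3.10}, already contradicting $\lambda_n\ge\s_0$ for large $n$.) I expect the compactness of $\{\psi_n\}$ in $\mathcal C^1(\bar\Omega)$ to be the real obstacle: it rests on the uniform a priori bounds that keep $\{\lambda_n\}$ bounded, on passing to the limit in the nonlinear term — which succeeds precisely because $f$ is superlinear at the origin relative to $p$ — and on identifying the limit via uniqueness of $\varphi_0$; an extra subtlety occurs near $\Gamma_{\mathcal D}$, where $\varphi_0$ vanishes and the Hopf-type $\mathcal C^1$-regularity of $w_n/\varphi_0$ is what makes Theorem~\ref{th2.1} legitimately applicable and the limit passage valid.
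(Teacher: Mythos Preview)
Your proof is correct but takes a more roundabout route than the paper. You invoke the Picone identity only in its degenerate form $g\equiv1$, which is nothing but the self-adjointness of $\mathcal L$, and then compensate via a renormalization/compactness argument showing $w_n/\|w_n\|_{L^2}\to\varphi_0$, so that the factor $\varphi_0^{\,p}$ needed in front of $f(w_n)/w_n^{\,p}$ emerges only in the limit. The paper instead applies Theorem~\ref{th2.1} to the pair $(u,\varphi_0)$ with the nontrivial choice $g(t)=t^p$, which immediately yields
\[
(\s_0-\lambda)\int_\Omega\frac{\varphi_0^{\,p+1}}{u^{\,p-1}}\ \ge\ -\int_\Omega a\,\varphi_0^{\,p+1}\,\frac{f(u)}{u^{\,p}},
\]
so the integral against $a\,\varphi_0^{\,p+1}$ appears directly and tends to the negative quantity in \eqref{3.10} as $\|u\|_\infty\to0$; no renormalization, no compactness, no bootstrap, and no separate treatment of $p=1$. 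Your argument is in spirit an a posteriori recovery of the bifurcation direction in Proposition~\ref{Prop3.2} by contradiction; what the choice $g(t)=t^p$ buys is precisely the elimination of that entire compactness layer, which is the point the section is making about the strength of the generalized Picone identity. (A minor remark: your hedge ``at least in the substantive range $p\ge2$'' is unnecessary, since $\lim_{s\downarrow0}f(s)/s^p$ finite with $p\ge1$ already forces $f(s)/s$ bounded near $0$, and your main limit passage goes through for every $p>1$.)
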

\begin{proof}
Let $(\lambda,u)$ be a positive solution of \eqref{3.1}. Then,  since $p\geq 1$, it is easily seen
that $u\gg 0$ in $\O$, in the sense of \eqref{3.3}.
Thus, since $\varphi_0\gg 0$, the quotient $\tfrac{\varphi_0}{u}$ preserves its regularity even on the Dirichlet components of $\partial\Omega$. Thus, applying Theorem \ref{th2.1} with $g(t)=t^p$, $t\in\R$,  to the functions $u$ and $\varphi_0$ and taking into account that $A(x)$ is positive definite  yields the estimate
\begin{equation*}
\int_{\Omega} \left(\frac{\varphi_0}{u}\right)^{p}\left(u\mathcal{L}\varphi_0-\varphi_0\mathcal{L}u\right)=
p\int_{\Omega}\frac{\varphi_0^{p-1}}{u^{p-3}}\langle \nabla\frac{\varphi_0}{u},A\nabla\frac{\varphi_0}{u}\rangle \ge 0,
\end{equation*}
since $\mathcal{B}u=\mathcal{B}\varphi_0=0$ on $\partial\Omega$ and, so, either $\mathcal{D}u=\mathcal{D}\varphi_0=0$, or $\mathcal{R}u=\mathcal{R}\varphi_0=0$, on each component of $\partial\Omega$. On the other hand, using the fact that $u$ solves \eqref{3.1} it follows from  the definition of $\varphi_0$ that
\begin{equation*}
u\mathcal{L}\varphi_0-\varphi_0\mathcal{L}u=(\s_0-\lambda)u\varphi_0+a(x)f(u)\varphi_0.
\end{equation*}
Hence, multiplying this identity  by $\frac{\varphi_0^p}{u^p}$ and integrating in $\Omega$ we obtain that
\begin{equation}
\label{3.12}
(\s_0-\lambda)\int_\Omega \frac{\varphi_0^{p+1}}{u^{p-1}}+\int_{\Omega} a \varphi_0^{p+1}\frac{f(u)}{u^p}=\int_\Omega\left(\frac{\varphi_0}{u}\right)^{p}(u\mathcal{L}\varphi_0-\varphi_0\mathcal{L}u)\ge 0.
\end{equation}
Therefore, by the Lebesgue's dominated convergence theorem, it follows from \eqref{3.10} that
\begin{equation*}
(\s_0-\lambda)\int_\Omega \frac{\varphi_0^{p+1}}{u^{p-1}}\ge -\int_{\Omega} a \varphi_0^{p+1}\frac{f(u)}{u^p}\xrightarrow{\|u\|_{\infty}\to 0,\ u\ge 0}-\lim_{s\to 0^+} \frac{f(s)}{s^{p}}\int_{\Omega} a \varphi_0^{p+1}> 0.
\end{equation*}
Consequently, since $\int_{\Omega}\frac{\varphi_0^{p+1}}{u^{p-1}}>0$, it is apparent that $\s_0>\l$. This ends the proof.
\end{proof}

In this result, the size of $\varepsilon>0$ only depends on how
\begin{equation}
\label{3.13}
\int_{\Omega} a \varphi_0^{p+1}\frac{f(u)}{u^p}\;\;\hbox{approximates}\;\;\lim_{s\da 0} \frac{f(s)}{s^{p}}\int_{\Omega} a \varphi_0^{p+1}\;\;\hbox{as}\;\;\|u\|_{\infty}\to 0.
\end{equation}

\subsection{Nonexistence of positive solutions for $\l\geq \s_0$}
\label{sec-3.2}

\noindent The next result shows that when the approximation \eqref{3.13} occurs suddenly, i.e., when $f(u)=u|u|^{p-1}$, $u\in\R$, for some $p > 1$, then $\varepsilon=+\infty$, i.e., \eqref{3.1} cannot
admit a positive solution if $\lambda\geq \s_0$. It is a substantial extension of a result of Section 6 of H. Berestycki, I. Capuzzo-Dolcetta \& L. Nirenberg \cite{BCDN} and \cite[Th. 4.2]{LG97}. Later, in Theorem \ref{th3.7}, we  will establish the optimality of this result.

\begin{theorem}
\label{th3.5}
Assume that  $p>1$ exists such that $f(s):=s^p$ for every $s\ge 0$ and
\begin{equation}
\label{3.14}
\int_\Omega a(x)\varphi_0^{p+1}(x)\,dx\le 0.
\end{equation}
Then, $\lambda< \s_0$ if \eqref{3.1} admits a positive solution, $(\lambda,u)$.
\end{theorem}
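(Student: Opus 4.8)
The plan is to imitate the proof of Theorem \ref{th3.4}, but to exploit the fact that the homogeneity $f(s)=s^p$ makes the ``approximation'' in \eqref{3.13} exact, with no error term, so that the argument runs for \emph{any} positive solution rather than only for small ones. Concretely, let $(\lambda,u)$ be a positive solution of \eqref{3.1}. Since $p>1$ and $f(s)=s^p$, the nonlinearity $a(x)f(u)$ is bounded, so by elliptic regularity and the strong maximum principle (together with the boundary point lemma on $\Gamma_{\mathcal D}$) we get $u\gg 0$ in the sense of \eqref{3.3}; consequently $\varphi_0/u\in\mathcal C^1(\bar\Omega)$, even across the Dirichlet part of the boundary, exactly as in Theorem \ref{th3.4}.

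Next I would apply the generalized Picone identity, Theorem \ref{th2.1}, with $g(t)=t^p$ to the pair $u,\varphi_0$. Because $A(x)$ is positive definite (as $\mathcal L$ is uniformly elliptic) and $g'(t)=pt^{p-1}\ge 0$ for $t\ge 0$, the interior term $p\int_\Omega u^2(\varphi_0/u)^{p-1}\langle\nabla(\varphi_0/u),A\nabla(\varphi_0/u)\rangle$ is nonnegative, and the boundary term vanishes since on each component of $\partial\Omega$ either $\mathcal Du=\mathcal D\varphi_0=0$ or $\mathcal Ru=\mathcal R\varphi_0=0$. This yields
\[
\int_\Omega\left(\frac{\varphi_0}{u}\right)^{p}\bigl(u\mathcal L\varphi_0-\varphi_0\mathcal L u\bigr)\ge 0.
\]
Using $\mathcal L\varphi_0=\s_0\varphi_0$ and $\mathcal L u=\lambda u-a(x)u^p$, we have $u\mathcal L\varphi_0-\varphi_0\mathcal L u=(\s_0-\lambda)u\varphi_0+a(x)u^p\varphi_0$, so after multiplying by $(\varphi_0/u)^p$ and integrating,
\[
(\s_0-\lambda)\int_\Omega\frac{\varphi_0^{p+1}}{u^{p-1}}+\int_\Omega a(x)\varphi_0^{p+1}\ge 0,
\]
where the crucial point is that the second integral is \emph{exactly} $\int_\Omega a\varphi_0^{p+1}$, with no dependence on $u$, precisely because $f(u)/u^p\equiv 1$.

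Finally I would combine this with the hypothesis \eqref{3.14}: since $\int_\Omega a\varphi_0^{p+1}\le 0$, the inequality above forces $(\s_0-\lambda)\int_\Omega\varphi_0^{p+1}u^{1-p}\ge 0$, and as $\int_\Omega\varphi_0^{p+1}u^{1-p}>0$ we conclude $\lambda\le\s_0$. To upgrade this to the strict inequality $\lambda<\s_0$, I would argue by contradiction: if $\lambda=\s_0$, then the displayed estimate collapses to $\int_\Omega a\varphi_0^{p+1}\ge 0$, which together with \eqref{3.14} gives $\int_\Omega a\varphi_0^{p+1}=0$; but then the full Picone identity must hold with equality, forcing the quadratic-form integrand $\langle\nabla(\varphi_0/u),A\nabla(\varphi_0/u)\rangle$ to vanish identically, hence $\varphi_0/u\equiv k$ for some constant $k>0$, i.e. $u=k^{-1}\varphi_0$. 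Plugging $u=c\varphi_0$ into \eqref{3.1} with $\lambda=\s_0$ yields $0=-a(x)c^p\varphi_0^p$ in $\Omega$, so $a\equiv 0$ on the open set where $\varphi_0>0$, contradicting $f\ne 0$ together with the assumption that $a$ genuinely changes sign (so $a\not\equiv 0$). Hence $\lambda<\s_0$.

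I expect the only delicate point to be the strict-inequality upgrade: one must be careful that equality in the Picone identity really does force $\nabla(\varphi_0/u)\equiv 0$ (which uses positive definiteness of $A$ and connectedness of $\Omega$), and that the resulting rigidity contradicts the standing hypothesis on $a$. The rest — establishing $u\gg0$, the regularity of $\varphi_0/u$ up to $\Gamma_{\mathcal D}$, and the bookkeeping with the boundary operators — is entirely parallel to Theorem \ref{th3.4} and requires no new ideas beyond observing that homogeneity removes the error term and thus the smallness restriction on $\|u\|_\infty$.
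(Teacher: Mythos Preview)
Your proof is correct and follows essentially the same route as the paper's: both apply the generalized Picone identity (Theorem \ref{th2.1}) with $g(t)=t^p$ to the pair $(u,\varphi_0)$ and exploit that $f(u)/u^p\equiv 1$ kills the error term. The only difference is organizational: the paper case-splits up front on whether $\varphi_0/u$ is constant (obtaining a strict Picone inequality when it is not, and reaching a direct contradiction from $a(x)u^{p-1}=\lambda-\sigma_0$ when it is), whereas you first derive $\lambda\le\sigma_0$ and then handle $\lambda=\sigma_0$ by forcing equality in the Picone identity and hence $\varphi_0/u$ constant --- these are equivalent rearrangements of the same argument.
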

\begin{proof}
The proof can be easily adapted from the proof of Theorem \ref{th3.4}. First, assume that $\frac{\varphi_0}{u}$ is not constant. Then, by \eqref{3.12} and Theorem \ref{th2.1},
\begin{equation*}
(\s_0-\lambda)\int_{\Omega}\frac{\varphi_0^{p+1}}{u^{p-1}}+\int_{\Omega} a\varphi_0^{p+1}=p\int_{\Omega}\frac{\varphi_0^{p-1}}{u^{p-3}}\langle \nabla\frac{\varphi_0}{u},A\nabla\frac{\varphi_0}{u}\rangle >0.
\end{equation*}
Hence,
\begin{equation*}
(\s_0-\lambda)\int_{\Omega}\frac{\varphi_0^{p+1}}{u^{p-1}}>-\int_{\Omega} a\varphi_0^{p+1}\ge 0,
\end{equation*}
and so $\lambda<\s_0$. On the other hand, if $\frac{\varphi_0}{u}$ is a (positive) constant, then $u$ satisfies
\begin{equation*}
\lambda u-a(x)u^p=\mathcal{L}u=\s_0 u
\end{equation*}
and hence,
\begin{equation*}
a(x)u^{p-1}(x)=\lambda-\s_0\quad\hbox{for all}\;\; x\in\Omega.
\end{equation*}
Thus, $a$ cannot change sign, which contradicts \eqref{3.14}. Therefore, $\lambda<\s_0$. This ends the proof.
\end{proof}

\subsection{Neutrally stable solutions are quadratic subcritical turning points}
\label{sec-3.3}

\noindent A positive solution of \eqref{3.1}, $(\l_0,u_0)$, is said to be \emph{neutrally stable} if
\begin{equation}
\label{3.15}
\sigma[\mathcal{L}-\lambda_0+a(x)f'(u_0),\mathcal{B},\O]=0.
\end{equation}
The next result, also based on Theorem \ref{th2.1},  provides us with the local structure of the set of  solutions of \eqref{3.1} around any neutrally stable positive solution. It is a substantial
generalization of Proposition 3.2 of R. G\'{o}mez-Re\~{n}asco \& J. L\'{o}pez-G\'{o}mez \cite{GRLG00}.
Based on this result, we will establish in Section \ref{sec-3.5} the uniqueness of the linearly stable positive solution of \eqref{3.1} if it exists. This uniqueness result generalize, very substantially, the corresponding uniqueness theorems of \cite{GRLG00}, \cite{GRLG01} and \cite[Ch. 9]{LG15}.

\begin{theorem}
\label{th3.6}
Assume that one of the following three condition holds:
\begin{enumerate}[{\rm i)}]
\item\label{th3.6.item1}  $\Gamma_\mathcal{D}=\emptyset$ and $f(u)=u\log u$ for every $u\ge 0$,
\item\label{th3.6.item2} $\Gamma_\mathcal{D}=\emptyset$ and $f(u)=u^p$  for every $u\ge 0$ with $p\in (0,1)\cup(1,2)$,
\item\label{th3.6.item3} $f(u)=u^p$ for every $u\ge 0$ with $p \geq 2$.
\end{enumerate}
Let $(\lambda_0,u_0)$ be a neutrally stable positive solution of \eqref{3.1} such that $u_0\ge 1$ in case {\rm\ref{th3.6.item1})} and $u_0\ge \tau> 0$ in case {\rm \ref{th3.6.item2})}. Let $\psi_0\in W^{2,\t}(\O)$, $\t>N$, denote the principal eigenfunction associated with \eqref{3.15} normalized so that $\int_\O \psi_0^2=1$. Then, there exist $\varepsilon>0$ and two functions of class $\mc{C}^2$,
\begin{equation*}
\lambda :(-\varepsilon,\varepsilon)\to \mathbb{R},
\quad\hbox{and}\quad u:(-\varepsilon,\varepsilon)\to W^{2,\t}_{\mathcal{B}}(\O)
\end{equation*}
such that
\begin{equation*}
(\lambda(0),u(0))=(\lambda_0,u_0),\quad(\lambda'(0),u'(0))=(0,\psi_0),\quad \lambda''(0)<0,
\end{equation*}
for which the curve $(\lambda(s),u(s))$ provides us with the set of solutions of \eqref{3.1}
in a neighborhood of $(\lambda_0,u_0)$. Moreover, shortening $\varepsilon$, if necessary, $u(s)$ is linearly stable if $s\in(-\varepsilon,0)$ and linearly unstable if $s\in(0,\varepsilon)$.
\end{theorem}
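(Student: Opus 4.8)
The plan is to apply the Crandall--Rabinowitz bifurcation theorem from a simple eigenvalue to the fixed point operator $\mathfrak{F}$ introduced in the proof of Theorem~\ref{th3.1}, but now linearized around the nontrivial solution $(\lambda_0,u_0)$ rather than around the trivial branch, and then to extract the sign of $\lambda''(0)$ from a generalized Picone identity. First I would rewrite \eqref{3.1} as $\mathfrak{F}(\lambda,u)=u-(\mathcal{L}+\omega)^{-1}[(\lambda+\omega)u-a(x)f(u)]=0$ with $\omega$ large, which is a $\mathcal{C}^2$ compact perturbation of the identity (this uses $p\ge 2$ in case~\ref{th3.6.item3}), and observe that $D_u\mathfrak{F}(\lambda_0,u_0)$ is an isomorphism precisely when $0$ is not an eigenvalue of $\mathcal{L}-\lambda_0+a(x)f'(u_0)$ under $\mathcal{B}$; by the neutral stability hypothesis \eqref{3.15} its kernel is one-dimensional, spanned by $\psi_0$. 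The transversality condition $D_\lambda D_u\mathfrak{F}(\lambda_0,u_0)\psi_0\notin\mathrm{Im}\,D_u\mathfrak{F}(\lambda_0,u_0)$ reduces, after inverting the resolvent, to $\int_\Omega \psi_0 u_0\ne 0$, which holds since $\psi_0\gg 0$ and $u_0\gg 0$ (the strong positivity of $\psi_0$ comes from \cite[Ch.~5]{LG13} applied to the principal eigenvalue of $\mathcal{L}-\lambda_0+a(x)f'(u_0)$; here in cases \ref{th3.6.item1}) and \ref{th3.6.item2}) one needs $u_0\ge 1$, resp.\ $u_0\ge\tau>0$, to keep $a(x)f'(u_0)$ bounded so that the operator is well defined and the Krein--Rutman theory applies). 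Crandall--Rabinowitz then delivers $\varepsilon>0$ and $\mathcal{C}^2$ curves $\lambda(s)$, $u(s)$ with $(\lambda(0),u(0))=(\lambda_0,u_0)$, $(\lambda'(0),u'(0))=(0,\psi_0)$ (after the usual normalization $\int_\Omega(u(s)-u_0)\psi_0=0$), describing all solutions near $(\lambda_0,u_0)$.

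The heart of the matter is the strict inequality $\lambda''(0)<0$. Differentiating $\mathfrak{F}(\lambda(s),u(s))=0$ once and inverting $(\mathcal{L}+\omega)^{-1}$ gives, exactly as in the proof of Proposition~\ref{Prop3.2}, the identity $(\mathcal{L}-\lambda(s)+a f'(u(s)))u'(s)=\lambda'(s)u(s)$; differentiating once more and evaluating at $s=0$, using $\lambda'(0)=0$ and $u'(0)=\psi_0$, yields
\begin{equation*}
(\mathcal{L}-\lambda_0+a f'(u_0))u''(0)=\lambda''(0)u_0-a f''(u_0)\psi_0^2 .
\end{equation*}
Testing against $\psi_0$ and using that $\mathcal{L}-\lambda_0+a f'(u_0)$ is formally self-adjoint with $\psi_0$ in its kernel (so $\int_\Omega \psi_0(\mathcal{L}-\lambda_0+af'(u_0))u''(0)=0$), we obtain $\lambda''(0)\int_\Omega u_0\psi_0=\int_\Omega a f''(u_0)\psi_0^3$. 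Thus $\lambda''(0)$ has the sign of $\int_\Omega a f''(u_0)\psi_0^3$, and it remains to prove that this integral is strictly negative.

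This last step is where Theorem~\ref{th2.1} enters, and I expect it to be the main obstacle. The idea is to apply the generalized Picone identity to the pair $(u_0,\psi_0)$ with a cleverly chosen $g$ so that the boundary term vanishes (which it does: $\mathcal{B}u_0=\mathcal{B}\psi_0=0$ forces $\mathcal{D}u_0\mathcal{R}\psi_0-\mathcal{D}\psi_0\mathcal{R}u_0=0$ on each component of $\partial\Omega$) and the bulk gradient term on the right, namely $\int_\Omega u_0^2 g'(\psi_0/u_0)\langle\nabla(\psi_0/u_0),A\nabla(\psi_0/u_0)\rangle$, can be compared with $\int_\Omega a f''(u_0)\psi_0^3$. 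Using $\mathcal{L}u_0=\lambda_0 u_0-af(u_0)$ and $\mathcal{L}\psi_0=(\lambda_0-af'(u_0))\psi_0$, one computes $u_0\mathcal{L}\psi_0-\psi_0\mathcal{L}u_0=a\psi_0(f(u_0)-u_0 f'(u_0))$, so the left side of \eqref{2.2} becomes $\int_\Omega g(\psi_0/u_0)a\psi_0(f(u_0)-u_0 f'(u_0))$. For the specific nonlinearities in the statement — $f(u)=u^p$ with $p\ge 2$, or $p\in(0,1)\cup(1,2)$ with $\Gamma_\mathcal{D}=\emptyset$, or $f(u)=u\log u$ — one has $f(u_0)-u_0f'(u_0)=(1-p)u_0^p$ (resp.\ $-u_0$), a definite-sign quantity, and $f''(u_0)=p(p-1)u_0^{p-2}$ (resp.\ $1/u_0$), so both $\int_\Omega af''(u_0)\psi_0^3$ and the Picone left-hand side can be written as multiples of integrals of $a$ against powers of $\psi_0/u_0$; choosing $g(t)=t^p$ (resp.\ a logarithmic $g$ in case~\ref{th3.6.item1})) makes the two proportional. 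The ellipticity of $A$ forces the gradient term on the right to be $\ge 0$, and it is strictly positive unless $\psi_0/u_0$ is constant — which is impossible here: if $\psi_0=cu_0$ then plugging into the two equations gives $af(u_0)=au_0f'(u_0)$ pointwise, i.e.\ $(p-1)u_0^p\equiv 0$ (resp.\ $u_0\log u_0\equiv u_0$, i.e.\ $u_0\equiv e$), which is absurd since $u_0\gg0$ and, in case~\ref{th3.6.item1}), would force $a f'(u_0)=a(\log u_0+1)$ with $u_0\equiv e$, contradicting that $a$ is sign-changing. Hence the right-hand side of \eqref{2.2} is strictly positive, which translates into the strict inequality on $\int_\Omega af''(u_0)\psi_0^3$ with the correct sign, giving $\lambda''(0)<0$. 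The delicate bookkeeping — matching the power of $\psi_0/u_0$ appearing from $g$ with the one in $f''$, keeping track of the sign of $(1-p)$ in the three regimes, and verifying that $\psi_0/u_0\in\mathcal{C}^1(\bar\Omega)$ so that Theorem~\ref{th2.1} is applicable (using $u_0\gg0$ up to and including the Dirichlet part of the boundary, as in the proof of Theorem~\ref{th3.4}) — is the part that requires genuine care. Finally, the linear stability assertion follows from Proposition~\ref{Prop3.2}(\ref{Prop3.2.item2}): for $s<0$ close to $0$, $\lambda''(0)<0$ gives $\lambda'(s)>0$, while $u(s)\gg0$, so $\lambda'(s)u(s)>0$ and $u(s)$ is stable; for $s>0$, $\lambda'(s)<0$ and the strict stability inequality reverses, so $u(s)$ is unstable (here one invokes $\sigma[\mathcal{L}-\lambda(s)+af'(u(s)),\mathcal{B},\Omega]$ crossing $0$ transversally at $s=0$, its derivative in $s$ being controlled by the same pairing $\int_\Omega u_0\psi_0^2$-type quantity via standard perturbation theory for simple eigenvalues).
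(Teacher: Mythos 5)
Your overall architecture is correct and matches the paper's: a Lyapunov--Schmidt / fold-point analysis around $(\lambda_0,u_0)$ to produce the $\mathcal C^2$ curve (the paper cites Amann's Proposition~20.7 rather than Crandall--Rabinowitz, but the transversality condition $\int_\Omega u_0\psi_0\ne 0$ you identify is exactly the one used), the second-order expansion giving $\lambda''(0)\int_\Omega u_0\psi_0=\int_\Omega a\,f''(u_0)\psi_0^3$, the computation $u_0\mathcal L\psi_0-\psi_0\mathcal L u_0=a\psi_0\bigl(f(u_0)-u_0f'(u_0)\bigr)$, and a Picone identity to sign the last integral. The Kato-type transversal crossing of $\Sigma(s)$ at $s=0$ that you gesture at for the stability claim is precisely what the paper carries out.

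There is, however, a genuine gap in the Picone step. You propose taking $g(t)=t^p$. With that choice the left-hand side of \eqref{2.2} becomes
\[
\int_\Omega \Bigl(\tfrac{\psi_0}{u_0}\Bigr)^p a\,\psi_0\,(1-p)u_0^p=(1-p)\int_\Omega a\,\psi_0^{p+1},
\]
while the quantity you need to sign is $\int_\Omega a\,f''(u_0)\psi_0^3=p(p-1)\int_\Omega a\,u_0^{p-2}\psi_0^3$. These two integrals are not proportional, so the sign of one does not control the sign of the other and the argument does not close. The correct choice, and the one the paper makes, is $g(t)=t^2$: then the left-hand side is $(1-p)\int_\Omega a\,u_0^{p-2}\psi_0^3$, and the algebraic identity $\bigl(f(u)/u\bigr)'=\tfrac1p f''(u)$ for $f(u)=u^p$ (resp.\ $=f''(u)$ for $f(u)=u\log u$) turns it into exactly $-\tfrac1p\int_\Omega a\,f''(u_0)\psi_0^3$. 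With $g(t)=t^2$ the right-hand side of \eqref{2.2} is $2\int_\Omega u_0\psi_0\langle\nabla(\psi_0/u_0),A\nabla(\psi_0/u_0)\rangle\ge 0$, and strict positivity follows from $\psi_0\not\propto u_0$. You should also repair the non-proportionality argument: from $\psi_0=\kappa u_0$ one obtains $a\bigl(f(u_0)-u_0f'(u_0)\bigr)\equiv 0$, not $f(u_0)-u_0f'(u_0)\equiv 0$; picking a point $x_0$ where $a(x_0)\ne 0$ (which exists since $a\ne 0$) and using $f(u_0)-u_0f'(u_0)=(1-p)u_0^p$ (resp.\ $=-u_0$ for $f(u)=u\log u$, since $f'(u)=\log u+1$) forces $u_0(x_0)=0$, contradicting $u_0\gg 0$ in $\Omega$; your ``$u_0\equiv e$'' in case~\ref{th3.6.item1}) is a slip stemming from using $f'(u)=\log u$ instead of $\log u+1$.
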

\begin{proof}
The existence of a real analytic curve of solutions is an immediate consequence of Proposition 20.7 of H. Amann  \cite{Am76}. A more recent approach, where the underlying analysis has been considerably tidied up, can be found in \cite[Pr. 9.7]{LG15}. In these references the existence of the curve follows from the implicit function theorem after a Lyapunov--Schmidt decomposition. The fact that
\begin{equation*}
(\lambda(0),u(0))=(\lambda_0,u_0),\quad u'(0) =\psi_0,
\end{equation*}
follows easily from these previous constructions. Differentiating with respect to $s$ in
\begin{equation*}
\mathcal{L}u(s)=\lambda(s)u(s)-a(x)f(u(s))
\end{equation*}
yields
\begin{equation}
\label{3.16}
\mathcal{L}u'(s)=\lambda'(s)u(s)+\lambda(s)u'(s)-a(x)f'(u(s))u'(s).
\end{equation}
Thus, particularizing at $s=0$, multiplying the resulting identity by $\psi_0$ and integrating by parts in $\Omega$ yields
\begin{equation*}
\lambda'(0)=\frac{\int_{\Omega}\psi_0 [\mathcal{L}-\lambda_0+a(x)f'(u_0)]\psi_0}{\int_{\Omega} u_0\psi_0}=0.
\end{equation*}
Similarly, by differentiating \eqref{3.16} with respect to $s$, it follows that
\begin{align*}
\mathcal{L}u''(s)=\lambda''(s)u(s) & +2\lambda'(s)u'(s)  +\lambda(s)u''(s)\\
&-a(x)f''(u(s))(u'(s))^2-a(x)f'(u(s))u''(s).
\end{align*}
Thus, since $\l'(0)=0$, particularizing at $s=0$ shows that
\begin{equation*}
\mathcal{L}u''(0)=\lambda''(0)u_0+\lambda_0 u''(0)-a(x)f''(u_0)\psi_0^2-a(x)f'(u_0)u''(0).
\end{equation*}
Hence,  multiplying by $\psi_0$ and integrating by parts in $\Omega$ yields
\begin{align*}
\lambda''(0)&=\frac{\int_{\Omega}u''(0)[\mathcal{L}-\lambda_0+af'(u_0)]\psi_0+
\int_{\Omega}a\psi_0^3 f''(u_0)}{\int_{\Omega}u_0\psi_0}=\frac{\int_{\Omega}a\psi_0^3 f''(u_0)}{\int_{\Omega}u_0\varphi_0}.
\end{align*}
To prove that $\l''(0)<0$ one can argue as follows. By the definition of $u_0$ and $\psi_0$, we have that \begin{equation}
\label{3.17}
\begin{split}
\left(\frac{\psi_0}{u_0}\right)^2( u_0\mathcal{L}\psi_0 & -\psi_0\mathcal{L}u_0) =
\frac{\psi_0^2}{u_0}\mathcal{L}\psi_0-\frac{\psi_0^3}{u_0^2}\mathcal{L}u_0\\ & =
\frac{\psi_0^2}{u_0}\left(\l_0\psi_0-af'(u_0)\psi_0\right) -  \frac{\psi_0^3}{u_0^2}\left( \l_0 u_0-
af(u_0)\right)\\ & = -a\psi_0^3\frac{f'(u_0)u_0-f(u_0)}{u_0^2} = -a\psi_0^3 \left( \frac{f(u)}{u}\right)'\Big|_{u=u_0}.
\end{split}
\end{equation}
Thus, integrating in $\O$, we find that
\[
\int_\Omega\left(\frac{\psi_0}{u_0}\right)^2(u_0\mathcal{L}\psi_0-\psi_0\mathcal{L}u_0) =-\int_\Omega a\psi_0^3\left(\frac{f(u)}{u}\right)'\Big|_{u=u_0}.
\]
As it turns out that the functions $f(u)=u^p$, $p\in (0,+\infty)\setminus \{1\}$, and $f(u)=u\log u$, are the unique ones satisfying
\begin{equation*}
\left(\frac{f(u)}{u}\right)'=\frac{1}{p}\,f''(u)\quad\hbox{and}\quad
\left(\frac{f(u)}{u}\right)'=f''(u),
\end{equation*}
respectively, it becomes apparent that, for these functions,
\begin{equation*}
\mathrm{sgn}\,\lambda''(0)=\mathrm{sgn}\,\int_{\Omega}a\psi_0^3 f''(u_0) = - \mathrm{sgn}\,\int_\Omega\left(\frac{\psi_0}{u_0}\right)^2
(u_0\mathcal{L}\psi_0-\psi_0\mathcal{L}u_0).
\end{equation*}
Consequently, the fact that $\l''(0)<0$ follows easily from Theorem \ref{th2.1}, which provides us with the identity
\begin{equation*}
\int_\Omega\left(\frac{\psi_0}{u_0}\right)^2(u_0\mathcal{L}\psi_0-\psi_0\mathcal{L}u_0)=2 \int_\Omega \psi_0u_0 \langle \nabla\frac{\psi_0}{u_0}, \nabla\frac{\psi_0}{u_0}\rangle> 0,
\end{equation*}
because  $u_0$ cannot be a multiple of $\psi_0$. Indeed, on the contrary case, $\psi_0=\kappa u_0$ for some $\kappa >0$ and hence, it follows from \eqref{3.17} that
\begin{equation*}
a \left(f(u_0)-u_0 f'(u_0)\right)=0\;\;\hbox{in}\;\;\Omega.
\end{equation*}
Thus, since we are assuming that  $a\in\mathcal{C}(\bar\Omega)$ satisfies $a\neq 0$, there exists  $x_0\in\Omega$ such that
\begin{equation*}
f(u_0(x_0))=u_0(x_0) f'(u_0(x_0)).
\end{equation*}
However, for the special choices $f(u)=u\log u$, and  $f(u)=u^p$, $p\in(0,+\infty)\setminus\{1\}$,
this identity entails $u_0(x_0)=0$. Therefore, we are in case \ref{th3.6.item3}) and necessarily $x_0\in\partial\Omega$, which contradicts $x_0\in \Omega$.
\par
For the stability of the positive solution $(\lambda(s),u(s))$ for sufficiently small $s\sim 0$, we have to ascertain the sign of the principal eigenvalue
\begin{equation*}
\Sigma(s):=\sigma[\mathcal{L}-\lambda(s)+a(x)f'(u(s)),\mathcal{B},\O].
\end{equation*}
In particular, since $\Sigma(0)=0$, it suffices to show that $\Sigma'(0)<0$. As $\Sigma(s)$ is a simple eigenvalue of class $\mc{C}^1$ in $s$, it follows from the abstract theory of T. Kato \cite{Kato} that $\psi_0$ admits a $\mc{C}^1$ perturbation, $\psi(s)$, $s\sim 0$, such that $\psi(0)=\psi_0$ and $\int_\O \psi^2(s)=1$ for sufficiently small $s$ (see also \cite[Le. 2.2.1]{LG01}). Thus, differentiating with respect to $s$ the identity
\begin{equation*}
\mathcal{L}\psi(s)-\lambda(s)\psi(s)+af'(u(s))\psi(s)=\Sigma(s)\psi(s),
\end{equation*}
we are driven to the identity
\begin{equation*}
[\mathcal{L}-\lambda(s)+af'(u(s))]\psi'(s)-\lambda'(s)\psi(s)+af''(u(s))u'(s)\psi(s)
=\Sigma(s)\psi'(s)+\Sigma'(s)\psi(s).
\end{equation*}
So, particularizing at $s=0$, we have that
\begin{equation*}
[\mathcal{L}-\l_0+af'(u_0)]\psi'(0)+af''(u_0)\psi_0^2=\Sigma'(0)\psi_0.
\end{equation*}
Therefore, multiplying by $\psi_0$ this identity and integrating by parts in $\O$ the next identity holds \begin{equation*}
\Sigma'(0)=\int_\Omega a f''(u_0)\psi_0^3=\lambda''(0)\int_\Omega u_0 \psi_0<0,
\end{equation*}
which ends the proof.
\end{proof}

Figure \ref{Fig1} represents a genuine quadratic subcritical turning point. Theorem \ref{th3.6}
establishes that this is the bifurcation diagram of \eqref{3.1} in a neighborhood of any
linearly neutrally stable positive solution, $(\l_0,u_0)$. The half low branch, plotted with a continuous line, is filled in by linearly stable positive solutions, while the upper one, plotted with a discontinuous line, consists of linearly unstable positive solutions with one-dimensional unstable manifold.

\begin{figure}[h!]
\centering
\includegraphics[scale=0.23]{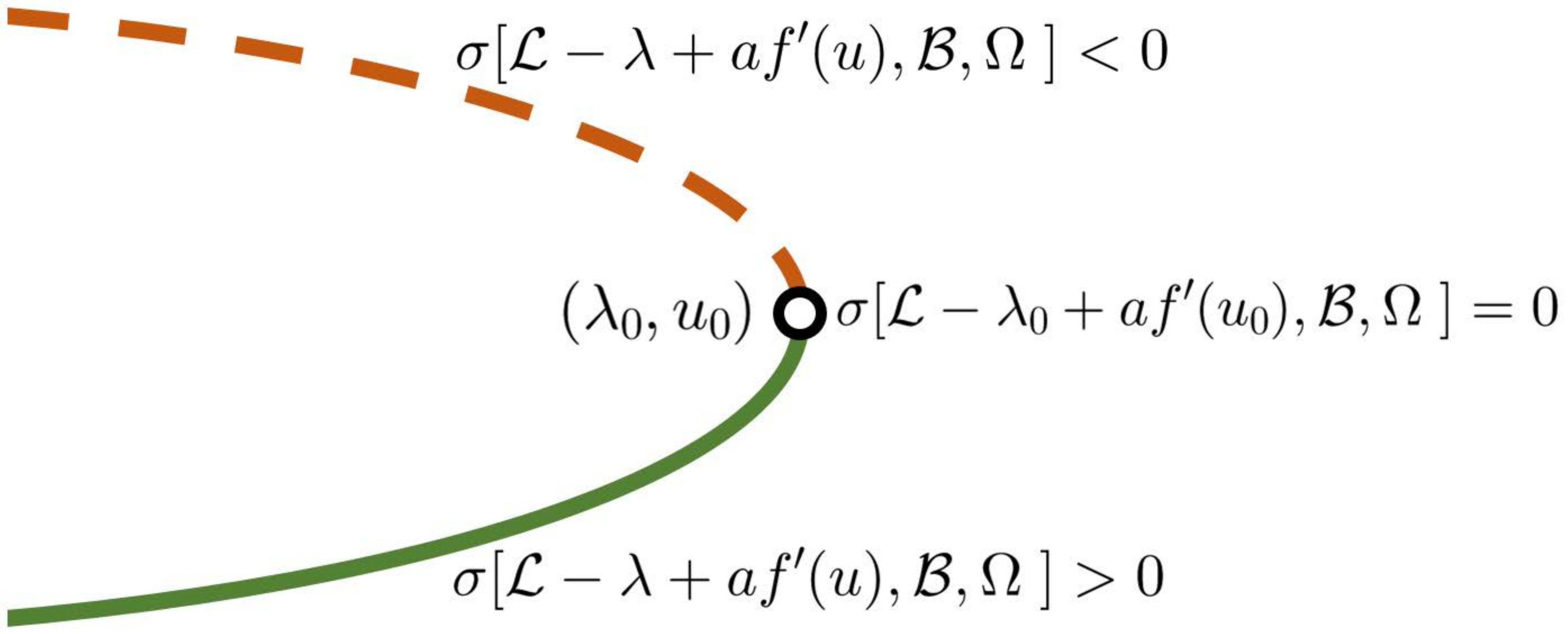}
\caption{Local bifurcation diagram at a linearly neutrally stable positive solution, $(\l_0,u_0)$.}
\label{Fig1}
\end{figure}

\subsection{Optimality of Theorems \ref{th3.5} and \ref{th3.6}}
\label{sec-3.4}

\noindent The next result shows the optimality of Theorem \ref{th3.5}, in the sense that it fails to be true when $f(u)$ does not have the required form. Remember that $\varphi_0\gg 0$ stands for the principal eigenfunction associated with $\s_0:=\sigma[\mathcal{L},\mathcal{B},\Omega]$ normalized so that $\int_\O\v_0^2=1$.

\begin{theorem}
\label{th3.7}
Assume that there exist $0<p<q$ and $F$, $G\in \mathcal{C}^r([0,+\infty);\mathbb{R})$, $r\ge 2$, such that $F(0)=G(0)=0$, $F'(0)=G'(0)=0$ and
\begin{equation*}
\lim_{s\to 0}\frac{F(s)}{s|s|^{p-1}} \int_\Omega a\varphi_0^{p+1}<0<\lim_{s\to 0}\frac{G(s)}{s|s|^{q-1}} \int_\Omega a\varphi_0^{q+1}.
\end{equation*}
Then, there exists $\nu_0>0$ such that, for every $\nu\in (0,\nu_0)$,  the problem
\begin{equation}
\label{3.18}
\left\{
\begin{array}{ll}
\mathcal{L}u=\lambda u-a(x)(\nu F(u)+G(u)) & \hbox{in}\;\;\Omega,\\
\mathcal{B}u=0 & \hbox{on}\;\;\partial\Omega,
\end{array}
\right.
\end{equation}
admits positive solutions for values of the parameter $\l$ at both sides of $\s_0$.
\end{theorem}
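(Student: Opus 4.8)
The plan is to exploit the dichotomy provided by Theorem~\ref{th3.4} together with an analysis of the bifurcation direction from $(\s_0,0)$, applied to the one-parameter family of nonlinearities $f_\nu(u):=\nu F(u)+G(u)$. The key observation is that the two hypotheses on $F$ and $G$ push the local bifurcation curve in \emph{opposite} directions depending on the relative strength of the two terms, and since $p<q$ the term $\nu F$ always dominates near $u=0$, no matter how small $\nu>0$ is; whereas away from $u=0$ the term $G$ eventually takes over. Thus for small $\nu$ the problem \eqref{3.18} should have a positive solution curve that starts \emph{subcritically} at $(\s_0,0)$ (forced by the sign of $\int_\Omega a\varphi_0^{p+1}$ as dictated by $F$) but whose continuation visits $\lambda>\s_0$ (a regime which, because the $\nu F$ correction is negligible there, behaves like the $G$-problem, whose bifurcation is supercritical). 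The precise statement to be extracted is that \eqref{3.18} has positive solutions on both sides of $\s_0$.

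First I would set up the local bifurcation analysis. Since $F,G\in\mathcal{C}^r$ with $r\geq2$ and $F(0)=F'(0)=G(0)=G'(0)=0$, the nonlinearity $f_\nu=\nu F+G$ satisfies the hypotheses of Theorem~\ref{th3.1}, so for each $\nu$ there is a $\mathcal{C}^{r-1}$ curve $(\lambda_\nu(s),u_\nu(s))=(\s_0+\mu_\nu(s),s(\varphi_0+y_\nu(s)))$ of positive solutions (for $s>0$) emanating from $(\s_0,0)$. Applying Proposition~\ref{Prop3.2}(i) with $p$ and then with $q$, and using that $F'(0)=F''(0)=\dots$ vanish up to order $p-1$ while the first nonvanishing behavior of $F$ is of order $p$ (and similarly for $G$ at order $q$), I would show that the dominant contribution to $\mu_\nu(s)$ as $s\to0^+$ comes from the $\nu F$ term: $\mu_\nu(s)\sim \nu\,c_F\,s^{p-1}$ with $c_F=\bigl(\lim_{s\to0}\tfrac{F(s)}{s^p}\bigr)\int_\Omega a\varphi_0^{p+1}<0$, so that for every $\nu>0$ the bifurcation is \emph{subcritical}: there exist small positive solutions with $\lambda<\s_0$. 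This handles one side of $\s_0$.

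For the other side I would argue by a perturbation/limiting argument as $\nu\downarrow0$. When $\nu=0$ the problem reduces to $\mathcal{L}u=\lambda u - a(x)G(u)$; by the same Proposition~\ref{Prop3.2}(i) applied with exponent $q$ and the hypothesis $\lim_{s\to0}\tfrac{G(s)}{s|s|^{q-1}}\int_\Omega a\varphi_0^{q+1}>0$, the bifurcation from $(\s_0,0)$ for the $G$-problem is \emph{supercritical}, so the $G$-problem has positive solutions with $\lambda>\s_0$ — fix one such solution $(\lambda_*,u_*)$ with $\lambda_*>\s_0$ and $u_*\gg0$. This $(\lambda_*,u_*)$ is a nondegenerate zero, or at least lies on a solution branch that persists under the $\mathcal{C}^r$ (indeed, in the operator $\mathfrak{F}$-setting, compact) perturbation $u\mapsto u-(\mathcal{L}+\o)^{-1}[(\lambda+\o)u-a(x)(\nu F(u)+G(u))]$; by the implicit function theorem if $(\lambda_*,u_*)$ is nondegenerate, or by a degree/global-continuation argument (continuity of the fixed point index, as used elsewhere in the paper) if it is not, for all sufficiently small $\nu\in(0,\nu_0)$ the problem \eqref{3.18} retains a positive solution with $\lambda$ close to $\lambda_*>\s_0$. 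Combining with the subcritical small solutions from the previous step gives positive solutions of \eqref{3.18} at both sides of $\s_0$.

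The main obstacle I anticipate is making the "perturbation persistence" of the supercritical solution $(\lambda_*,u_*)$ rigorous without assuming nondegeneracy: a priori the $G$-problem could be degenerate along its whole supercritical branch, so a clean implicit-function-theorem argument may not apply directly. The fix is to use a topological argument instead — one fixes a suitable isolating neighborhood in $\R\times\mathcal{C}(\bar\Omega)$ around a chunk of the supercritical $G$-branch (or around the whole component $\mathfrak{C}^+$ of the $G$-problem, using the unbounded component furnished by the global bifurcation theorems of \cite[Ch.~6]{LG13}) on which the Leray–Schauder degree of $\mathfrak{F}(\lambda,\cdot)$, for $\lambda$ slightly above $\s_0$, is nonzero, and then invokes stability of the degree under the $\mathcal{O}(\nu)$ perturbation in the nonlinearity. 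A secondary technical point is controlling the error terms $y_\nu(s)$ in the local expansion uniformly enough to guarantee that the $\nu F$ term genuinely dominates $\mu_\nu(s)$ for $s$ in a $\nu$-dependent but nonempty range $(0,\e_\nu)$; this follows from the $\mathcal{C}^{r-1}$-smoothness of the Crandall–Rabinowitz curve and Lebesgue dominated convergence exactly as in the proof of Proposition~\ref{Prop3.2}, but the bookkeeping must be done with care since $\e_\nu$ may shrink as $\nu\to0$.
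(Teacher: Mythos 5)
Your proposal follows essentially the same route as the paper: set up the Crandall--Rabinowitz curve from Theorem~\ref{th3.1}, use Proposition~\ref{Prop3.2}(i) to read off subcriticality of the bifurcation at $(\s_0,0)$ for every $\nu>0$ (the order-$p$ contribution of $\nu F$ dominates) and supercriticality for $\nu=0$ (order-$q$ contribution of $G$), and then persist the supercritical $\nu=0$ solutions with $\l>\s_0$ to all small $\nu>0$. The one genuine gap is one you flag yourself but leave open: the nondegeneracy of the supercritical $G$-solutions. You propose the implicit function theorem ``if $(\l_*,u_*)$ is nondegenerate'' and a degree-theoretic fallback otherwise, but the paper's point is that nondegeneracy is automatic, via the stability half of the very proposition you already invoke. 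On the supercritical $\nu=0$ branch one has $\l'(s)>0$ and $u(s)\gg 0$ for small $s>0$, so $\l'(s)u(s)>0$, and Proposition~\ref{Prop3.2}(ii) then gives
\begin{equation*}
\s[\mathcal{L}-\l+a\,G'(u(\l)),\mathcal{B},\O]>0
\end{equation*}
for $\l$ in an interval $(\s_0,\s_0+\e)$. A strictly positive principal eigenvalue makes the Fredholm linearization of $\mathfrak{F}$ a topological isomorphism, so the implicit function theorem applies directly, uniformly on a compact subinterval $[\l_1,\l_2]\subset(\s_0,\s_0+\e)$, producing a $\mathcal{C}^r$ family $u(\l,\nu)$ on $[\l_1,\l_2]\times[0,\nu_0]$, each member linearly stable. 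This makes your degree fallback unnecessary, and it would in any case be more delicate than you suggest, since $\nu F$ is a negligible perturbation of $G$ only near $u=0$, not uniformly along the whole component $\mathfrak{C}^+$, so you would still have to isolate a compact piece of the branch on which to compute the index. One last stylistic point: your opening appeal to ``the dichotomy provided by Theorem~\ref{th3.4}'' is a misattribution; what the argument actually uses is the bifurcation direction from Proposition~\ref{Prop3.2}(i), not the nonexistence statement of Theorem~\ref{th3.4}.
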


The hypothesis of Theorem \ref{th3.7} can be fulfilled even with polynomials. For example, the choices
\[
  \Omega=\left(-\frac{\pi}{2},\frac{\pi}{2}\right), \qquad \mathcal{L}=-\frac{d^2}{dx^2}, \qquad \G_{\mc{R}}=\emptyset,
\]
and
\[
   a(x)=\cos x -0.9,\qquad F(u)=u^2,\qquad G(u)=u^3,
\]
satisfy  all its assumptions with $p=2$ and $q=3$. Indeed, in this case
\[
   \s_0=1,\qquad \v_0(x)=\sqrt{\tfrac{2}{\pi}} \cos x,
\]
and
\begin{align*}
\lim_{s\to 0}\frac{F(s)}{s|s|}\int_\Omega (\cos x-0.9) \cos^3 x \,dx & =-0.0219028 < 0,\\[5pt]
\lim_{s\to 0}\frac{G(s)}{s|s|^2}\int_\Omega (\cos x -0.9)\cos^4 x\,dx & =0.00637915 > 0.
\end{align*}
Therefore, all the requirements of Theorem \ref{th3.7} hold.
\vspace{0.2cm}

\noindent\emph{Proof of Theorem \ref{th3.7}}.
Suppose $\nu\geq 0$. Then, owing to Theorem \ref{th3.1}, there exist $s_0=s_0(\nu) >0$ and two maps of class $\mathcal{C}^{r-1}$
\begin{equation*}
\lambda(s):\;[0,s_0)\to \mathbb{R},\qquad u(s):\;[0,s_0)\to W^{2,\t}_\mathcal{B}(\Omega),
\end{equation*}
such that
\begin{equation*}
(\lambda(0),u(0))=(\s_0,0),\quad u'(0)=\varphi_0,
\end{equation*}
and $(\l(s),u(s))$ solves \eqref{3.1} for all $s \in(-s_0,s_0)$.
Moreover, by Proposition \ref{Prop3.2}, for every $\nu>0$,
\begin{align*}
\lim_{s\da 0}  \frac{\lambda(s)-\s_0}{s^{p-1}} & =\lim_{s\da 0}
\frac{\nu   F(s)+G(s)}{s^p}\int_{\Omega}a \varphi_0^{p+1}\\ &
=\nu \lim_{s\da 0}\frac{F(s)}{s^p} \int_\Omega a\varphi_0^{p+1}+\lim_{s\da 0}\left(
s^{q-p} \frac{G(s)}{s^q}\right) \int_{\Omega}a \varphi_0^{p+1}\\ & =
\nu \lim_{s\da 0}\frac{F(s)}{s^p} \int_\Omega a\varphi_0^{p+1}<0,
\end{align*}
whereas, for $\nu=0$,
\begin{equation*}
\lim_{s\da 0} \frac{\lambda(s)-\s_0}{s^{q-1}}=\lim_{s\da 0} \frac{G(s)}{s^q}\int_{\Omega}a \varphi_0^{q+1}>0.
\end{equation*}
Thus, when $\nu=0$, the bifurcation of the curve of positive solutions $(\lambda(s),u(s))$ from the trivial branch is supercritical. In such case, by Proposition \ref{Prop3.2}, we also have that $u(s)$, as an steady-state solution of \eqref{3.7}, is linearly stable for sufficiently small $s\in (0,s_0)$. Subsequently, we shorten $s_0$, if necessary, so that $u(s)$ is linearly stable for all $s\in(0,s_0)$. Then, $\lambda'(s)>0$ for each $s\in (0,s_0)$. Thus, there exists $\e>0$ such that the curve of positive solutions $(\l(s),u(s))$, $s\sim 0$, can be parameterized by $\l$, $(\l,u(\l))$,
with $\lambda\in(\s_0,\s_0+\e)$, in a neighborhood of the bifurcation point $(\s_0,0)$. In particular,
\begin{equation}
\label{3.19}
\sigma[\mathcal{L}-\lambda+aG'(u(\lambda)),\mathcal{B},\O]>0,\qquad \l\in (\s_0,\s_0+\e).
\end{equation}
Pick $\o>-\s_0$ arbitrary and two values $\l_1, \l_2\in (\s_0,\s_0+\e)$, with $\l_1<\l_2$. Then, setting
\begin{equation*}
\mathfrak{F}(\nu,\lambda,u):=u-(\mathcal{L}+\o)^{-1}[(\lambda +\o)u-a (\nu F(u)+G(u))],
\end{equation*}
we have that, for every $\lambda\in [\l_1,\l_2]$,
\begin{equation}
\label{3.20}
D_u \mathfrak{F}(0,\lambda,u(\lambda))u=u-(\mathcal{L}+\o)^{-1}\left[(\lambda+\o) u+a G'(u(\lambda))u\right].
\end{equation}
Since it is a compact perturbation of the identity map, $D_u \mathfrak{F}(0,\lambda,u(\lambda))$ is Fredholm of index zero in $L^\t(\O)$ for all $\t>N$. Moreover, owing to \eqref{3.19}, it is a topological isomorphism. Therefore, by the implicit function theorem and the compactness of $[\l_1,\l_2]$, $u(\l)$ can be regarded as a function of class $\mathcal{C}^r$-regularity of $\l$ and $\nu$, $u(\l,\nu)$,
in $[\l_1,\l_2]\times [0,\nu_0]$ for sufficiently small $\nu_0>0$. Furthermore, by \eqref{3.19}, it becomes apparent that $(\l,u(\l,\nu))$ is linearly asymptotically stable for all $\l\in [\l_1,\l_2]$
and $\nu \in [0,\nu_0]$.
\par
On the other hand, as soon as $\nu>0$, we have that
\begin{equation*}
\lim_{s\da 0} \frac{\lambda(s)-\s_0}{s^{p-1}}=\lim_{s\da 0} \frac{\nu F(s)+G(s)}{s^p}\int_{\Omega}a \varphi_0^{p+1}=\nu \lim_{s\da 0} \frac{F(s)}{s^p}\int_{\Omega}a \varphi_0^{p+1}<0.
\end{equation*}
Thus, the bifurcation of $(\lambda(s),u(s))$ from $(\l,0)$ is subcritical. Consequently,
for every $\nu \in (0,\nu_0)$, \eqref{3.1} admits positive solutions at both sides of $\s_0$,
which ends the proof of the theorem.  \hfill $\Box$
\vspace{0.2cm}
\par
With a little bit more of effort, much like in the proof of the main theorem of M. G. Crandall \& P. H. Rabinowitz \cite{CR71} (see also the proof of Theorem 2.2.1 in \cite{LG01}), one can also define the auxiliar operator
\[
  \mathfrak{G}(s,\nu,\l,y):= \left\{ \begin{array}{ll} s^{-1}\mathfrak{F}(\nu,\l,s(\v_0+y)), &\quad
  \hbox{if}\;\; s\neq 0, \\[5pt] D_u\mathfrak{F}(\nu,\l,0)(\v_0+y),&\quad \hbox{if}\;\; s=0, \end{array}\right.
\]
and apply the implicity function to it at $(0,0,\s_0,0)$ to infer that, actually, the local bifurcation diagram of positive solutions of \eqref{3.18} for sufficiently small $\nu>0$ looks like shows Figure \ref{Fig2}. This is a direct consequence from the uniqueness, uniform in $\nu$, of the smooth curve of positive solutions of
\eqref{3.18} bifurcating from $(\s_0,0)$.  Therefore, the example after the statement of Theorem \ref{th3.7} also shows the optimality of Theorem \ref{th3.6} in the sense that if condition \ref{th3.6.item3}) of Theorem \ref{th3.6} fails, then
the problem can admit supercritical turning points at linearly neutrally stable positive solutions, like
the one shown on the right plot of Figure \ref{Fig2}.

\begin{figure}[h!]
\centering
\includegraphics[scale=0.24]{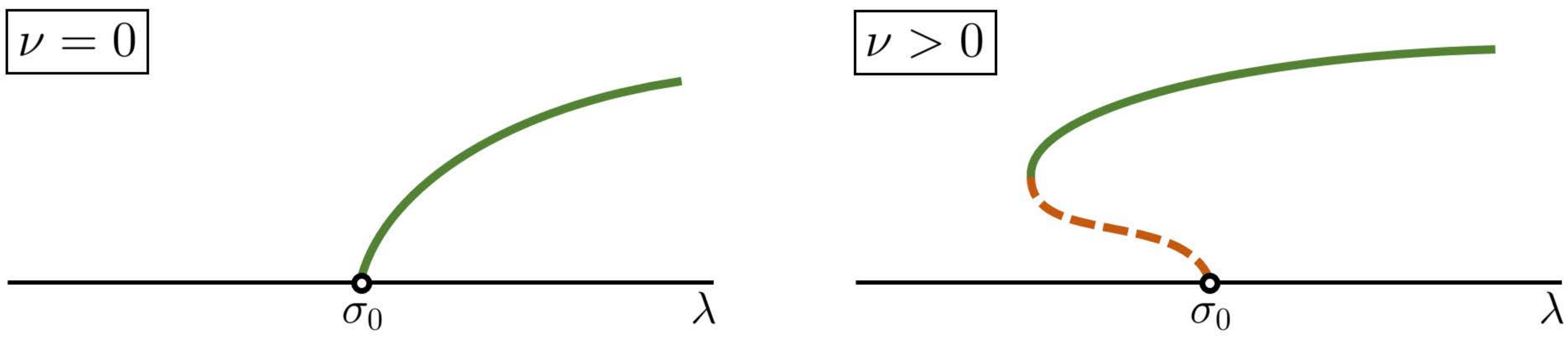}
\caption{Bifurcation diagrams of \eqref{3.18} when $\nu=0$ and $\nu>0$.}
\label{Fig2}
\end{figure}

\subsection{Uniqueness of the stable solution of \eqref{3.1} when $f(u)=u^p$, $p\geq 2$}
\label{sec-3.5}

\noindent The next result relies on Theorem \ref{th3.6}. It is a substantial extension of
the previous results of R. G\'{o}mez-Re\~{n}asco \& J. L\'{o}pez-G\'{o}mez \cite{GRLG00},
\cite{GRLG01}, for as here we are dealing with general boundary operators of mixed type.

\begin{theorem}
\label{th3.8}
Suppose that $f(u)=u^p$ for all $u\geq 0$ with $p\geq 2$. Then:
\begin{enumerate}[{\rm i)}]
\item\label{th3.8.item1} Any positive solution, $(\l_0,u_0)$ of \eqref{3.1} with $\l_0\leq \s_0$ must be linearly unstable, as an steady state of \eqref{3.7}, i.e.,
\begin{equation}
\label{3.21}
  \s[\mathcal{L}-\l_0+af'(u_0),\mathcal{B},\O]=\s[\mathcal{L}-\l_0+pau_0^{p-1},\mathcal{B},\O]<0.
\end{equation}
\item\label{th3.8.item2} The problem \eqref{3.1} admits some linearly stable positive solution, $(\l_0,u_0)$,  if, and only if,
\begin{equation}
\label{3.22}
  \mathcal{D}:=\int_\O a(x)\v_0^{p+1}(x)\,dx >0.
\end{equation}
Moreover, in such case, $\l_0>\s_0$.
\item\label{th3.8.item3} Suppose \eqref{3.22} and $\l>\s_0$. Then, the unique positive linearly stable or linearly neutrally stable solution of \eqref{3.1} is the minimal one.
\item\label{th3.8.item4} Suppose that \eqref{3.1} admits a positive solution $(\l_0,u_0)$ for some $\lambda_0>\s_0$. Then, it admits a minimal solution $(\l_0,u_{\mathrm{min}})$.
\end{enumerate}
\end{theorem}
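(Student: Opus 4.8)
The plan is to prove the four assertions in the order \ref{th3.8.item4}), \ref{th3.8.item1}), \ref{th3.8.item2}), \ref{th3.8.item3}), since \ref{th3.8.item2}) is essentially a formal consequence of \ref{th3.8.item1}) together with Theorem~\ref{th3.5} and Corollary~\ref{co3.3}, the genuinely delicate point being the uniqueness in \ref{th3.8.item3}). For \ref{th3.8.item4}), the key remark is that, as $\lambda_0>\s_0$, the null state is a linearly unstable steady state of \eqref{3.7}, because $\s[\mc{L}-\lambda_0,\mc{B},\O]=\s_0-\lambda_0<0$; expanding $a(x)u^p$ near $u=0$ this forces a uniform lower estimate $v\geq c_0\v_0$, for some $c_0>0$, valid for every positive solution $v$ of \eqref{3.1} at $\lambda=\lambda_0$, since a positive solution too small in $\mc{C}(\bar\O)$ would provide a positive supersolution for an operator with negative principal eigenvalue. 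On the other hand, for $\e>0$ small, $\e\v_0$ is a strict subsolution of \eqref{3.1} at $\lambda_0$ (one only needs $\e^{p-1}a\v_0^{p-1}\leq\lambda_0-\s_0$, which holds), and, shrinking $\e$, also $\e\v_0\leq u_0$. The monotone iteration started at $\e\v_0$ and trapped below the supersolution $u_0$ converges to a positive solution, and the uniform estimate $v\geq c_0\v_0$ shows that this limit is independent of small $\e$ and lies below every positive solution, hence is the minimal one $u_{\mathrm{min}}$.

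For \ref{th3.8.item1}), let $(\lambda_0,u_0)$ be a positive solution with $\lambda_0\leq\s_0$; then $u_0\gg 0$ and $u_0$ cannot be proportional to $\v_0$, for otherwise $a\v_0^{p-1}$ would have constant sign in $\O$, against the hypothesis on $a$. Writing $Q$ for the quadratic form attached to $(\mc{L},\mc{B})$, so that $\langle\mc{L}w,w\rangle_{L^2(\O)}=Q[w]$ for $w\in W^{2,\t}_\mc{B}(\O)$ and $\s_0=\min Q[w]/\|w\|_{L^2}^2$, testing \eqref{3.1} against $u_0$ yields $\int_\O a u_0^{p+1}=\lambda_0\|u_0\|_{L^2}^2-Q[u_0]$, and since $u_0$ is not the principal eigenfunction, $Q[u_0]>\s_0\|u_0\|_{L^2}^2$ strictly, whence $\int_\O a u_0^{p+1}<(\lambda_0-\s_0)\|u_0\|_{L^2}^2\leq 0$. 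Using now $u_0$ itself as a test function in the Rayleigh quotient characterizing $\s[\mc{L}-\lambda_0+p a u_0^{p-1},\mc{B},\O]$ and substituting $Q[u_0]-\lambda_0\|u_0\|_{L^2}^2=-\int_\O a u_0^{p+1}$, the numerator collapses to $(p-1)\int_\O a u_0^{p+1}<0$, which gives \eqref{3.21}. Part \ref{th3.8.item2}) then follows immediately: if $\mc{D}\leq 0$ then Theorem~\ref{th3.5} forces every positive solution to satisfy $\lambda<\s_0$, so by \ref{th3.8.item1}) none is linearly stable; whereas if $\mc{D}>0$ then Corollary~\ref{co3.3} makes the bifurcation from $(\s_0,0)$ supercritical, so that by Proposition~\ref{Prop3.2}(ii) the positive solutions $u(s)$, $s>0$ small, are linearly stable with $\lambda(s)>\s_0$, and the clause $\lambda_0>\s_0$ is just the contrapositive of \ref{th3.8.item1}).

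For \ref{th3.8.item3}) I would first check that $u_{\mathrm{min}}$ is itself linearly stable or neutrally stable: if $\s[\mc{L}-\lambda+p a u_{\mathrm{min}}^{p-1},\mc{B},\O]<0$, with positive principal eigenfunction $\psi$, then $u_{\mathrm{min}}-\d\psi$ would be, for $\d>0$ small, a positive supersolution of \eqref{3.1} lying strictly below $u_{\mathrm{min}}$, against minimality. Arguing by contradiction, suppose $(\lambda,u_0)$ is another linearly stable or neutrally stable positive solution; by minimality $u_0\gg u_{\mathrm{min}}$. I would analyze the connected component $\mf{C}$ of $(\lambda,u_0)$ inside the closed set of linearly stable or neutrally stable positive solutions of \eqref{3.1}: near each strictly stable point the implicit function theorem represents $\mf{C}$ as a smooth graph over $\lambda$, while near each neutrally stable point Theorem~\ref{th3.6} represents $\mf{C}$ as the stable half of a quadratic subcritical turning point, so $\mf{C}$ is a one--dimensional manifold with boundary whose boundary points are turning points. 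Since $\lambda$ has a strict local maximum at each boundary point of $\mf{C}$ and is strictly monotone along every interior graph piece, $\mf{C}$ can be neither a closed loop nor a compact arc with both ends at turning points (either case would force an interior local minimum of $\lambda$ along $\mf{C}$); moreover $\mf{C}$ cannot accumulate at $(\s_0,0)$, since near $(\s_0,0)$ the positive solutions are exactly those of the unique Crandall--Rabinowitz curve of Theorem~\ref{th3.1}, which is $u_{\mathrm{min}}$ and so lies in a different component. Hence $\mf{C}$ would have to be unbounded; but by \ref{th3.8.item1}) one has $\lambda>\s_0$ throughout $\mf{C}$, and linearly stable positive solutions of the superlinear indefinite problem \eqref{3.1} are a priori bounded on $\lambda$--compacta, the unbounded positive solutions being the unstable ones — a contradiction forcing $u_0=u_{\mathrm{min}}$.

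The step I expect to be the main obstacle is precisely this last one: controlling the global shape of $\mf{C}$ and, above all, ruling out that $\mf{C}$ escapes to infinity. Theorem~\ref{th3.6} does the purely local work — it is what forbids $\mf{C}$ from folding back in the ``wrong'' direction or from closing up into a loop — but the substantive ingredient is the a priori boundedness of linearly stable positive solutions of \eqref{3.1} on compact ranges of $\lambda$ (together with the exclusion of the regime $\lambda\to+\infty$ along $\mf{C}$), and that is where the superlinear indefinite structure of the problem must be exploited in full.
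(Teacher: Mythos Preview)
Your variational proof of Part~\ref{th3.8.item1}) is correct and is a genuine simplification of the paper's argument. The paper proceeds by contradiction via a lengthy global continuation: assuming a stable solution exists at some $\lambda_0\leq\sigma_0$, it continues the stable branch backwards to all $\lambda<\lambda_1$, shows $u(\lambda)$ is monotone and bounded, and then reaches a contradiction through a rescaling $u=|\lambda|^{1/(p-1)}v$ as $\lambda\to-\infty$. Your route --- testing both \eqref{3.1} and the linearised operator with $u_0$ and exploiting the self-adjoint variational characterisation $\sigma_0=\min Q[w]/\|w\|_{L^2}^2$ --- collapses everything to the single line $(p-1)\int_\Omega a\,u_0^{p+1}<0$. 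This is strictly shorter and more transparent; it does, however, use self-adjointness of $(\mathcal{L},\mathcal{B})$ in an essential way, whereas the paper's continuation argument would survive for non-divergence operators.

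There is a genuine gap in Part~\ref{th3.8.item3}). Your endgame hinges on the assertion that ``linearly stable positive solutions of \eqref{3.1} are a~priori bounded on $\lambda$-compacta,'' which you yourself flag as the main obstacle. This claim is not available and, in view of Theorem~\ref{th4.1}~\ref{th4.1.item2}) (where $\|u(\lambda)\|_{\mathcal{C}(\bar\Omega)}\to+\infty$ along the \emph{stable} branch as $\lambda\uparrow\lambda_*$), it can fail. So the alternative ``$\mathfrak{C}$ unbounded'' cannot be excluded that way. The missing observation --- and this is exactly how the paper closes the argument --- is that on the strictly stable branch one has
\[
  u'(\lambda)=\bigl(\mathcal{L}-\lambda+p\,a\,u^{p-1}(\lambda)\bigr)^{-1}u(\lambda)\gg 0,
\]
so that $u$ is \emph{decreasing} as $\lambda$ decreases. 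Hence, continuing $(\lambda_0,u_0)$ backwards in $\lambda$, the curve stays bounded above by $u_0$; it cannot run off to $-\infty$ in $\lambda$ (by your Part~\ref{th3.8.item1})), and it cannot terminate at a neutrally stable point (by Theorem~\ref{th3.6}, since a subcritical turning point is approached along the stable branch only by \emph{increasing} $\lambda$). The only remaining possibility is $u(\lambda)\to 0$, i.e.\ the curve lands at $(\sigma_0,0)$; by the local uniqueness of Theorem~\ref{th3.1} it then coincides with the bifurcating curve, whence $u_0=u_{\mathrm{min}}$. With this monotonicity step in place your component analysis becomes unnecessary: one only needs to follow the curve in the decreasing-$\lambda$ direction, where boundedness is automatic. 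Your Parts~\ref{th3.8.item2}) and~\ref{th3.8.item4}) are fine and match the paper's reasoning.
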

\begin{proof}
First we will prove Part \ref{th3.8.item1}). Let  $(\l_0,u_0)$ be a positive solution of \eqref{3.1} with $\l_0\leq \s_0$. Then, by the uniqueness of the principal eigenvalue, it follows from \eqref{3.1} that
\begin{equation}
\label{3.23}
  \l_0= \s[\mathcal{L}+au_0^{p-1},\mathcal{B},\O].
\end{equation}
Thus, when $a\lneq 0$, it follows from the monotonicity of the principal eigenvalue with respect to
the potential that
\[
  \s[\mathcal{L}+p au_0^{p-1}-\l_0,\mathcal{B},\O]<\s[\mathcal{L}+ au_0^{p-1}-\l_0,\mathcal{B},\O]=0.
\]
This monotonicity, in the general setting covered in this paper, was established by S. Cano-Casanova
\& J. L\'{o}pez-G\'{o}mez \cite[Pr. 3.3]{CCLG}. Moreover, in this case, it also follows from
\eqref{3.23} that
\[
   \l_0= \s[\mathcal{L}+au_0^{p-1},\mathcal{B},\O]<  \s[\mathcal{L},\mathcal{B},\O]=\s_0.
\]
Thus, \eqref{3.1} cannot admit a positive solution if $\l\geq \s_0$.
\par
The proof of Part \ref{th3.8.item1}) in the general case when $a(x)$ changes of sign is far more subtle than
in the special case when $a\lneq 0$. Our proof here is an adaptation of the proof of Theorem 9.9
in \cite{LG15}. It proceeds by contradiction. Suppose that \eqref{3.1} possesses a positive solution, $(\l_0,u_0)$, such that
\begin{equation}
\label{3.24}
  \l_0\leq \s_0,\qquad \s[\mathcal{L}-\l_0+pau_0^{p-1},\mathcal{B},\O]\geq 0.
\end{equation}
According to Theorem \ref{th3.6}, this entails the existence of some positive solution,
$(\l_1,u_1)$, of \eqref{3.1} such that
\begin{equation}
\label{3.25}
  \l_1\leq \l_0\leq \s_0,\qquad \s[\mathcal{L}-\l_1+p au_1^{p-1},\mathcal{B},\O]>0.
\end{equation}
By the implicit function theorem applied to the operator $\mathfrak{F}(\l,u)$ defined in the
proof of Theorem \ref{th3.1}, it becomes apparent that $(\l_1,u_1)$ lies on a smooth curve of
positive solutions, $(\l,u(\l))$, $\l\sim \l_1$, such that
\begin{equation}
\label{3.26}
       \s[\mathcal{L}-\l+p au^{p-1}(\l),\mathcal{B},\O]>0\quad \hbox{for}\;\;\l\sim\l_1.
\end{equation}
By global continuation of the curve $(\l,u(\l))$ for $\l<\l_1$, one of the following options occurs:
\begin{enumerate}[{\rm (a)}]
\item\label{th3.8.sub.item1} $u(\l)\gg 0$ and $\s[\mathcal{L}-\l+p au^{p-1}(\l),\mathcal{B},\O]>0$ for all $\l<\l_1$.
\item\label{th3.8.sub.item2} There exists $\l_2<\l_1$ such that $u(\l)\gg 0$ and $\s[\mathcal{L}-\l+p au^{p-1}(\l),\mathcal{B},\O]>0$ for all $\l<\l_2$, though $u(\l_2)=0$.

\item\label{th3.8.sub.item3} There exists $\l_3<\l_1$ such that $u(\l)\gg 0$ and $\s[\mathcal{L}-\l+p au^{p-1}(\l),\mathcal{B},\O]>0$ for all $\l<\l_3$, though $u(\l_3)\gg 0$ and
\[
  \s[\mathcal{L}-\l_3+p au_1^{p-1}(\l_3),\mathcal{B},\O]=0.
\]
\end{enumerate}
The option (\ref{th3.8.sub.item2}) cannot occur, because $\l_2<\l_0$ and $(\l_0,0)$ is the unique bifurcation point from
$(\l,0)$ to positive solutions of \eqref{3.1}. By Theorem \ref{th3.6}, the option (\ref{th3.8.sub.item3}) cannot occur neither. Therefore, (\ref{th3.8.sub.item1}) occurs. By differentiating $\mathfrak{F}(\l,u(\l))=0$ with respect to $\l$ it becomes apparent that
\[
  \left( \mathcal{L}-\l +pau^{p-1}(\l)\right)u'(\l)=u(\l)
\]
for all $\l \leq \l_1$ and hence,
\[
  u'(\l)= \left( \mathcal{L}-\l +pau^{p-1}(\l)\right)^{-1}u(\l) \gg 0.
\]
Therefore, the map $\l \mapsto u(\l)$ is point-wise increasing. In particular, there exists a constant
$\tau>0$ such that
\begin{equation}
\label{3.27}
  \|u(\l)\|_{\mathcal{C}(\bar\O)}\leq \tau  \quad \hbox{for all}\;\; \l \leq \l_1.
\end{equation}
Now, consider the change of variables given by
\begin{equation*}
u(\lambda)=|\lambda|^\frac{1}{p-1}v(\lambda),\quad\hbox{for all}\;\;\lambda\le \l_2:=\min\{0,\lambda_1\}.
\end{equation*}
Then, owing to \eqref{3.27}, we have that
\begin{equation}
\label{3.28}
(v(\lambda))^{p-1}\le \frac{\tau^{p-1}}{|\lambda|}\quad\hbox{for all}\;\;\lambda\le \l_2.
\end{equation}
Moreover, for every $\lambda<\l_2$, $v(\lambda)$ is a positive solution of
\begin{equation}
\label{3.29}
\frac{1}{|\lambda|}\mathcal{L} v=-v-a(x)v|v|^{p-1}\quad \hbox{in}\;\; \Omega.
\end{equation}
Let denote by $\varphi_0\gg 0$ the principal eigenfunction associated with $\sigma_0=\sigma[\mathcal{L},\mathcal{B},\Omega]$ normalized so that $\int_\O \v_0^2=1$.
Then, multiplying \eqref{3.29} by $\varphi_0$  and integrating in $\O$ yields
\begin{equation*}
\frac{\sigma_0}{|\lambda|}\int_\Omega \varphi_0 v(\lambda)=-\int_\Omega v(\lambda)\varphi_0-\int_\Omega a\varphi_0 (v(\lambda))^p.
\end{equation*}
Thus, thanks to \eqref{3.28}, we find that
\begin{equation*}
\left( \frac{\sigma_0}{|\lambda|}+1 \right)\int_\Omega \varphi_0 v(\lambda)\le \frac{1}{|\lambda|}\|a\|_\infty \tau^{p-1} \int_\Omega\varphi_0 v(\lambda).
\end{equation*}
Hence, for every $\l<\l_2$,
\begin{equation*}
\frac{\sigma_0}{|\lambda|}+1 \le \frac{1}{|\lambda|}\|a(x)\|_\infty \tau^{p-1},
\end{equation*}
which is impossible. This contradiction ends the proof of Part i).
\par
Part \ref{th3.8.item2}) is an immediate consequence of Proposition \ref{Prop3.2}, Theorem \ref{th3.5}  and Part \ref{th3.8.item1}). Indeed, if $\mathcal{D}>0$ then Proposition \ref{Prop3.2} provides us with a supercritical bifurcation, from the trivial branch, of a curve of linearly stable positive solutions, whereas, if $\mathcal{D}\le 0$, then Theorem \ref{th3.5} restricts to $\lambda\in(-\infty,\sigma_0)$ the values of the parameter for which the problem \eqref{3.1} admits positive solutions. By Part \ref{th3.8.item1}), these solutions are linearly unstable.
Therefore, \eqref{3.1} cannot admit a linearly stable positive solution if $\mathcal{D}\le 0$.
\par
To prove the uniqueness in Part \ref{th3.8.item3}), let $\lambda_0>\sigma_0$ be for which \eqref{3.1} has two linearly stable positive solutions of \eqref{3.1}, $u_0$ and $v_0$, $u_0\neq v_0$. As a consequence of the implicit function theorem, much like in the proof of Part \ref{th3.8.item1}), we can get two different curves of solutions, $u_0(\lambda)$ and $v_0(\lambda)$, for every $\lambda$ in a neighborhood of $\lambda_0$. By a rather standard global continuation argument, each of these curves should satisfy some of the alternatives, (\ref{th3.8.sub.item1}), (\ref{th3.8.sub.item2}) or (\ref{th3.8.sub.item3}), as in the proof of Part \ref{th3.8.item1}). Moreover, as all these solutions are non-degenerate, $u_0(\l)\neq v_0(\l)$, as soon as some of these solutions is linearly stable. As a consequence of Part \ref{th3.8.item1}), option (\ref{th3.8.sub.item1}) cannot occur. Similarly, by Theorem \ref{th3.6}, these curves cannot satisfy the option (\ref{th3.8.sub.item3}) neither.  Therefore,
$u_0(\lambda)$ and $v_0(\lambda)$ should bifurcate supercritically from the trivial branch at $(\lambda,u)=(\sigma_0,0)$, which contradicts the local uniqueness at $(\s_0,0)$ obtained as an application of Theorem \ref{th3.1}.
\par
The fact that the minimal positive solution of \eqref{3.1} is linearly stable, or linearly neutrally stable, for any $\l >\s_0$ where it admits a positive solution can be easily inferred by adapting the argument given in the proof of \cite[Th. 9.12]{LG15}, which was adapted from \cite{LGMMT} and H. Amann \cite{Am76}.
\par
The proof of Part \ref{th3.8.item4}) follows similar patterns as the proof of \cite[Th. 9.13]{LG15}. Fix $\lambda=\lambda_0>\s_0$. Under this assumption, $\underline{u}_\e:=\varepsilon\varphi_0$,
where $\v_0\gg 0$ stands for the normalized principal eigenfunction associated to $\s_0$,
is a subsolution of \eqref{3.1} for sufficiently small $\varepsilon>0$. Let $w_\varepsilon$ denote the unique solution of
\begin{equation*}
\left\{
\begin{array}{ll}
\frac{\partial w}{\partial t}+\mathcal{L} w=\lambda_0 w-a(x)w^p & \hbox{in}\;\;\Omega\times (0,+\infty),\\
\mathcal{B}w=0 & \mbox{on}\;\;\partial\Omega\times (0+\infty),\\
w(\cdot,0)=\underline{u}_\e & \mbox{in}\;\;\Omega.
\end{array}
\right.
\end{equation*}
Since this equation preserves the ordering, we find that
\[
  w_\varepsilon(t)\le u_0\quad \hbox{in}\;\; \Omega\;\;\hbox{for all}\;\; t>0.
\]
Moreover, thanks to the abstract theory of D. Sattinger \cite{Sat}, $w_\varepsilon(t)$ is increasing and globally defined in time. Actually, owing to the main theorem of M. Langlais \& D. Phillips \cite{LP},
the limit
\begin{equation*}
  w_\varepsilon^*:=\lim_{t\to+\infty} w_\varepsilon(t)
\end{equation*}
is well defined and provides us with a positive solution of \eqref{3.1} for  $\lambda=\lambda_0$. Moreover, by construction,
\begin{equation*}
    w_{\varepsilon_1}^*\le w_{\varepsilon_2}^*\le u_0\quad \mbox{if}\;\;0<\varepsilon_1<\varepsilon_2.
\end{equation*}
Thus, since $\varepsilon$ can be chosen sufficiently small so that $\underline{u}_\varepsilon$ lies below any given positive  solution of \eqref{3.1}, the limit
\begin{equation*}
 w^*_{\mathrm{min}}:=\min_{\varepsilon>0} w^*_\e
\end{equation*}
is the minimal positive solution of \eqref{3.1}, because we are assuming that $\lambda_0 > \sigma_0$ and $\sigma_0$ is the unique bifurcation point to positive solutions from $u=0$. Note that $w^*_{\mathrm{min}}$ stays below any positive solution of \eqref{3.1}, by construction.
\par
It remains to prove that the minimal positive solution, $w^*_{\mathrm{min}}$, is either linearly stable or neutrally stable. To show it we will argue by contradiction assuming that
\begin{equation*}
\sigma_{\mathrm{min}}:=\sigma[\mathcal{L}-\lambda_0+a(x)p (w^*_{\mathrm{min}})^{p-1},\mathcal{B};\Omega]<0.
\end{equation*}
Let $\varphi_{\mathrm{min}}\gg 0$ be a positive eigenfunction associated with
$\sigma_{\mathrm{min}}$. Then, for sufficiently small $\varepsilon>0$,
\begin{equation*}
\overline{u}_\varepsilon:=w_{\mathrm{min}}^*-\varepsilon \varphi_{\mathrm{min}}
\end{equation*}
provides us with a supersolution of \eqref{3.1} such that
\[
  \overline{u}_\varepsilon \ll w_{\mathrm{min}}^*.
\]
Indeed, as $\e\da 0$, we find that
\begin{align*}
\mathcal{L}\overline{u}_\varepsilon &= \l_0 w_{\mathrm{min}}^* - a(x)(w^*_{\mathrm{min}})^{p}-\e \mathcal{L} \varphi_{\mathrm{min}}\\ & =
\l_0 w_{\mathrm{min}}^* - a(x)(w^*_{\mathrm{min}})^{p}-\e \left( \sigma_{\mathrm{min}} \varphi_{\mathrm{min}}+\lambda_0 \varphi_{\mathrm{min}} -a(x)p (w^*_{\mathrm{min}})^{p-1}\varphi_{\mathrm{min}}\right) \\
&= \l_0 \overline{u}_\varepsilon - a(x)\left( (w^*_{\mathrm{min}})^{p} +\varepsilon p (w^*_{\mathrm{min}})^{p-1}\varphi_{\mathrm{min}}\right)-\varepsilon\sigma_{\mathrm{min}} \varphi_{\mathrm{min}}\\
&= \l_0 \overline{u}_\varepsilon - a(x)\overline{u}_\varepsilon^p - a(x)\left( (w^*_{\mathrm{min}})^{p} +\varepsilon p (w^*_{\mathrm{min}})^{p-1}\varphi_{\mathrm{min}}-(w_{\mathrm{min}}^*-\varepsilon \varphi_{\mathrm{min}})^p\right)-\varepsilon\sigma_{\mathrm{min}} \varphi_{\mathrm{min}}\\
&= \l_0 \overline{u}_\varepsilon - a(x)\overline{u}_\varepsilon^p - a(x) o(\varepsilon)-\varepsilon\sigma_{\mathrm{min}} \varphi_{\mathrm{min}}.
\end{align*}
Therefore, since $\sigma_{\mathrm{min}}<0$, for sufficiently small $\varepsilon>0$,
\begin{equation*}
  \mathcal{L}\overline{u}_\varepsilon > \l_0 \overline{u}_\varepsilon - a(x)\overline{u}_\varepsilon^p \quad\mbox{in}\;\;\Omega.
\end{equation*}
Since \eqref{3.1} admits arbitrarily small positive subsolutions, it becomes apparent that
\eqref{3.1} possesses a positive solution below $\overline{u}_\varepsilon$ and, hence, below $w^*_{\mathrm{min}}$, which is impossible. This ends the proof.
\end{proof}

\section{Global structure of the set of l.s. positive solutions  when $f(u)=u^p$, $p\geq 2$}
\label{sec-4}

\noindent The next result provides us with the global structure of the set of linearly stable and linearly neutrally stable positive solutions of \eqref{3.1} in the special case when $f(u)=u^{p}$ for some $p\geq 2$.

\begin{theorem}
\label{th4.1}  Suppose that $a(x)$ changes sign in $\O$, $f(u)=u^p$, $u\geq 0$, with $p\geq 2$, and that  \eqref{3.22} holds. Then, the supremum of the set of $\mu>\s_0$ for which  \eqref{3.1} possesses a positive
solution for each $\l \in (\s_0,\mu)$, $\l_*$, satisfies $\l_*\in (\s_0,+\infty)$.
Moreover, the set of linearly stable positive solutions of \eqref{3.1} consists of a $\mc{C}^1$ point-wise increasing curve
\[
  \mathfrak{C}_+:=\{(\l,u(\l))\;:\;\; \l \in (\s_0,\l_*)\}.
\]
Furthermore, some of the next excluding options occurs:
\begin{enumerate}[{\rm i)}]
\item\label{th4.1.item1} Either $\{u(\l)\}_{\l \in (\s_0,\l_*)}$ is bounded in $\mathcal{C}(\bar\O)$, and then
\[
  u_* := \lim_{\l \ua \l_*} u(\l)
\]
is a linearly neutrally stable positive solution of \eqref{3.1} at $\l=\l_*$, or

\item\label{th4.1.item2}  $\lim_{\l \ua \l_*} \|u(\l)\|_{\mathcal{C}(\bar \O)}=+\infty$.
\end{enumerate}
In both cases, \eqref{3.1} cannot admit any further positive solution for $\l >\l_*$.
\end{theorem}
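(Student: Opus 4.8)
The plan is to combine the structural results already established—Theorem~\ref{th3.8} (linear instability of positive solutions with $\l_0\le\s_0$, and uniqueness of the stable/neutrally stable solution when $\l>\s_0$), Theorem~\ref{th3.6} (every neutrally stable positive solution is a quadratic subcritical turning point), and the supercritical bifurcation from $(\s_0,0)$ provided by Corollary~\ref{co3.3} under hypothesis~\eqref{3.22}—with two global continuation arguments. First I would construct $\mathfrak{C}_+$: by~\eqref{3.22} and Corollary~\ref{co3.3}, positive solutions bifurcate supercritically from $(\s_0,0)$ and, by Proposition~\ref{Prop3.2}(ii), are linearly stable for small $s$. As in the proof of Theorem~\ref{th3.8}, the implicit function theorem applied to $\mathfrak{F}(\l,u)$ shows any linearly stable solution sits on a $\mathcal{C}^1$ curve along which $\s[\mathcal{L}-\l+pau^{p-1},\mathcal{B},\O]>0$ and $u'(\l)=(\mathcal{L}-\l+pau^{p-1})^{-1}u\gg0$, so the curve is point-wise increasing. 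I would let $\l_*$ be the supremum of $\mu$ such that a positive solution exists for every $\l\in(\s_0,\mu)$ and continue the curve maximally to the right; non-degeneracy keeps it a graph over $\l$.

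The next step is to show $\l_*<+\infty$. Here I would use the standard a priori argument: multiply the equation $\mathcal{L}u=\l u-a u^p$ by $\v_0$, integrate, use $\int_\O\v_0(\mathcal{L}-\s_0)u=0$ to get $(\l-\s_0)\int_\O\v_0 u=\int_\O a\v_0 u^p$, and combine with a lower bound coming from the sign-change of $a$ and an interior positivity estimate on $u$ (forced by $u\gg0$ and Harnack-type control), showing that $\l$ cannot be arbitrarily large while a positive solution persists; alternatively one invokes the classical blow-up/nonexistence result for superlinear indefinite problems for $\l$ large. Hence $\l_*\in(\s_0,+\infty)$.

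Then I would analyze the endpoint behaviour. Along $\mathfrak{C}_+$ the map $\l\mapsto u(\l)$ is increasing, so either $\{u(\l)\}$ stays bounded in $\mathcal{C}(\bar\O)$ or $\|u(\l)\|_{\mathcal{C}(\bar\O)}\to+\infty$. In the bounded case, elliptic $W^{2,\t}$ estimates give compactness, so $u_*:=\lim_{\l\ua\l_*}u(\l)$ exists and solves~\eqref{3.1} at $\l=\l_*$; its stability eigenvalue is the monotone limit of positive quantities, hence $\ge0$, and it must equal $0$—otherwise the solution would be linearly stable and the implicit function theorem would continue $\mathfrak{C}_+$ past $\l_*$, contradicting maximality—so $u_*$ is linearly neutrally stable. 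Finally, to see that no positive solution exists for $\l>\l_*$: such a solution would, by Theorem~\ref{th3.8}, be linearly unstable with one-dimensional unstable manifold; global continuation of its solution curve (as in the proof of Theorem~\ref{th3.8}(iii)) must, since option~(b) there is excluded by uniqueness of the bifurcation point $(\s_0,0)$ and option~(c) is excluded by Theorem~\ref{th3.6} forcing a \emph{subcritical} turning, reach either a turning point where it becomes stable—landing on $\mathfrak{C}_+$, contradicting that $\mathfrak{C}_+$ is a graph over $(\s_0,\l_*)$—or it persists with $\l\to\l_*$ and connects to $u_*$, again placing it on $\mathfrak{C}_+$; either way it cannot live strictly beyond $\l_*$. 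The main obstacle I expect is the endpoint/continuation bookkeeping in the unbounded case and in the non-existence-beyond-$\l_*$ claim: one must rule out a positive solution branch that exists for some $\l>\l_*$ but \emph{not} for all of $(\s_0,\l_*)$, and for this the crucial leverage is precisely that Theorem~\ref{th3.6} prohibits supercritical turning points at neutrally stable solutions, so every turning point along any branch of positive solutions of~\eqref{3.1} is subcritical, which prevents a branch from ``jumping over'' $\l_*$.
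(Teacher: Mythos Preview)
Your construction of $\mathfrak{C}_+$ and the endpoint dichotomy (bounded $\Rightarrow$ neutrally stable limit by compactness plus failure of IFT continuation; unbounded $\Rightarrow$ blow-up) match the paper essentially verbatim, so those parts are fine.

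There are two places where you diverge from the paper, and in both the paper's route is cleaner and your sketch has genuine gaps.

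\textbf{Finiteness of $\l_*$.} Your proposed identity $(\l-\s_0)\int_\O\v_0 u=\int_\O a\v_0 u^p$ is correct but does not by itself bound $\l$ from above without a quantitative lower bound on $u$ that you never produce; ``Harnack-type control'' is not a proof, and the alternative ``classical blow-up/nonexistence'' citation is equally vague. The paper's argument is a two-line eigenvalue estimate: since $a$ changes sign, pick a ball $B\subset\O$ with $a<0$ on $B$; then, for any positive solution, $\l=\s[\mathcal{L}+au^{p-1},\mathcal{B},\O]<\s[\mathcal{L}+au^{p-1},\mathcal{D},B]<\s[\mathcal{L},\mathcal{D},B]$, using the domain and potential monotonicity of the principal eigenvalue from \cite{CCLG}. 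This gives an explicit finite upper bound independent of $u$.

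\textbf{Nonexistence for $\l>\l_*$.} Here your argument is confused. You assert that a positive solution at some $\l_1>\l_*$ ``would, by Theorem~\ref{th3.8}, be linearly unstable'', but Theorem~\ref{th3.8} says no such thing for $\l_1>\s_0$; part~\ref{th3.8.item1}) concerns only $\l_0\le\s_0$. In fact the key tool you omit is Theorem~\ref{th3.8}\ref{th3.8.item4}): if \emph{any} positive solution exists at $\l_1>\s_0$, then a \emph{minimal} one exists, and (by the proof of~\ref{th3.8.item4}) that minimal solution is linearly stable or neutrally stable. From there the paper simply continues this stable/neutrally stable minimal solution \emph{backwards} in $\l$: along the way it stays stable (since neutral stability would force a subcritical turn by Theorem~\ref{th3.6}, hence a further decrease in $\l$), the curve is increasing, and it must terminate at $(\s_0,0)$. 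By the uniqueness of the stable solution (Theorem~\ref{th3.8}\ref{th3.8.item3})) this curve coincides with $\mathfrak{C}_+$, forcing $\l_1\le\l_*$, a contradiction. Your attempt to continue an \emph{unstable} solution and argue about options~(b),~(c) from the proof of~\ref{th3.8.item1}) does not transfer: those options were formulated for continuation of a \emph{stable} curve going \emph{downward} in $\l$, and the exclusion logic you invoke does not apply to an unstable branch whose direction of continuation in $\l$ you have not controlled.
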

\begin{proof}
The existence of $\l_*$, as well as the fact that $\lambda_*\in(\sigma_0,+\infty]$,
is a direct consequence of Theorem \ref{th3.1} and Proposition \ref{Prop3.2}. The fact that $\lambda_*$ is finite follows with the next argument. Let $(\lambda,u)$ be a positive solution of \eqref{3.1}. Then,
\begin{equation*}
  \l= \sigma[\mathcal{L}+a(x)u^{p-1},\mathcal{B},\Omega].
\end{equation*}
Moreover, since $a(x)$ changes sign, there exists a ball, $B\subset\O$, such that $a(x)<0$ for all $x\in B$. Thus, thanks to \cite[Cor. 3.6]{CCLG},
\begin{equation*}
\l=\sigma[\mathcal{L}+a(x)u^{p-1},\mathcal{B},\Omega]< \sigma[\mathcal{L}+a(x)u^{p-1},\mathcal{D},B]<
\sigma[\mathcal{L},\mathcal{D},B].
\end{equation*}
Therefore, $\lambda_*\leq \sigma[\mathcal{L},\mathcal{D},B]$. In particular, $\l_*\in (\s_0,\infty)$.
\par
Thanks to Proposition \ref{Prop3.2}, the solutions bifurcating from $u=0$ at $\s_0$ are linearly stable.
Therefore, thanks to implicit function theorem applied to the integral equation associated to
\eqref{3.1}, they consist of a $\mc{C}^1$ curve parameterized by $\l$ which is point-wise strictly increasing. By a global continuation argument involving the implicit function theorem, this curve can be globally parameterized by $\l$ in the form $(\lambda,u(\lambda))$, with $\lambda\in(\sigma_0,\lambda_\mathrm{max})$, for some maximal $\lambda_{\max}\in (\sigma_0,+\infty)$.  Necessarily, $\lambda_{\max}\le \lambda_*$. Moreover, thanks to their linearized stability, $u'(\lambda)\gg 0$ for all $\lambda\in (\sigma_0,\lambda_{\max})$. By construction, the curve \begin{equation*}
\mathfrak{C}_+:=\{(\lambda,u(\lambda))\;:\;\lambda\in (\sigma_0,\lambda_{\max})\},
\end{equation*}
provides us with the maximal set of linearly stable positive solutions of \eqref{3.1} that bifurcates from $u=0$. By the monotonicity of the solution on this curve, either
\begin{equation*}
\lim_{\lambda\uparrow\lambda_{\max}}\|u(\lambda)\|_{\mathcal{C}(\bar\Omega)}=+\infty,
\end{equation*}
much like illustrated in the right picture of Figure \ref{Fig3},
or $\{u(\lambda)\}_{\lambda\in(\sigma_0,\lambda_{\max})}$ stays bounded. In the latest case, by a rather standard compactness argument, it is easily seen that
\begin{equation*}
u_{\max} := \lim_{\lambda \uparrow \lambda_{\max}} u(\lambda)
\end{equation*}
is a solution of \eqref{3.1} for $\lambda=\lambda_{\mathrm{max}}$. By the continuity of the principal eigenvalue with respect to the potential, $u_{\max}$ is either linearly stable or neutrally stable. In the former case, the implicit function theorem would allow us to continue the curve
$\mf{C}_+$ beyond  $\lambda_{\max}$, which contradicts the maximality of $\l_\mathrm{max}$. Thus,  $u_{\max}$ is  neutrally stable and, due to Theorem \ref{th3.6}, the set of solutions surrounding it consists of a subcritical quadratic turning point, as illustrated by
the left picture of Figure \ref{Fig3}.
\par
Lastly, the proof that  $\lambda_{\max}=\lambda_*$ proceeds by contradiction. Suppose $\l_{\max}<\lambda_*$. Then, \eqref{3.1} admits a positive solution for some $\l_1>\lambda_{\max}$. By Theorem \ref{th3.8} \ref{th3.8.item4}), \eqref{3.1} also admits a minimal positive solution, which is either linearly stable, and hence part of an increasing curve of solutions, or neutrally stable, and hence a subcritical quadratic turning point. By a backwards global
continuation argument in $\l$ starting at $\l_1$, one can  construct an analytic curve of linearly stable positive solutions up to reach $(\lambda,u)=(\sigma_0,0)$, which contradicts
the definition of $\l_\mathrm{max}$ and ends the proof.
\end{proof}

Figure \ref{Fig3} shows two admissible global bifurcation diagrams for each of the cases \ref{th4.1.item1}) and \ref{th4.1.item2})
discussed by Theorem \ref{th4.1}. Some general conditions ensuring that the option
\ref{th4.1.item1}) of Theorem \ref{th4.1} occurs can be formulated from the a priori bounds of H. Amann \& J. L\'opez-G\'omez \cite{ALG}. The problem of ascertaining weather or not each
of these options can occur has not been solved yet.

\begin{figure}[h!]
\centering
\includegraphics[scale=0.3]{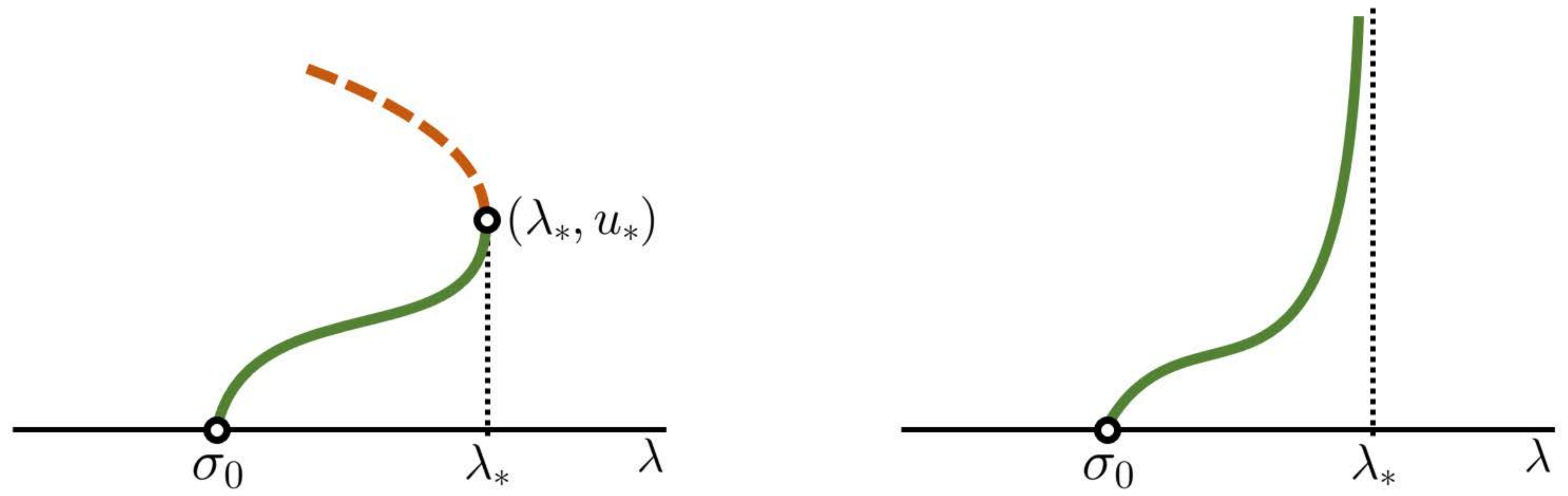}
\caption{Two admissible global bifurcation diagrams of linearly stable positive solutions.}
\label{Fig3}
\end{figure}

\section{Uniqueness of coexistence states in a class of symbiotic systems}
\label{sec-5}

\noindent This section derives from Theorem \ref{th2.1} a general sufficient condition so that
every coexistence state of a general class of diffusive Lotka--Volterra symbiotic systems is linearly stable, which entails their uniqueness.  Our results sharpen, very substantially, some of the previous findings of M. Delgado et al \cite{DLGS}. Throughout this section, we assume that, for every $i=1,2$, $\mathcal{L}_i$ is an uniformly elliptic second order selfadjoint operator of the form
\begin{equation*}
\mathcal{L}_i:=-\mathrm{div}(A_i\nabla \cdot)+C_i,
\end{equation*}
where $A_i\in\mathcal{M}_N^\mathrm{sym}(\mathcal{C}^1(\bar\Omega))$ is definite positive, and $C_i\in\mathcal{C}(\bar\Omega)$. For each $i=1, 2$, let
\begin{equation*}
\partial\Omega=\Gamma_{\mathcal{D}}^{i}\cup\Gamma_{\mathcal{R}}^{i}
\end{equation*}
be the decomposition of $\partial\Omega$ in disjoint closed and open subsets associated to the boundary operator
\begin{equation*}
\mathcal{B}_i:=
\left\{
\begin{array}{ll}
\mathcal{D}_i=Id & \hbox{on}\;\;\Gamma_{\mathcal{D}}^{i},\\[5pt]
\mathcal{R}_i=\langle \nabla \cdot,A_i{\bf n}\rangle+\beta_i & \hbox{on}\;\;\Gamma_{\mathcal{R}}^{i},
\end{array}
\right.
\end{equation*}
where $\beta_i\in\mathcal{C}(\Gamma_{\mathcal{R}}^{i})$, $i\in\{1,2\}$. Then, the symbiotic Lotka--Volterra reaction-diffusion prototype model can be written down as follows
\begin{equation}
\label{5.1}
\left\{
\begin{array}{ll}
d_1\mathcal{L}_1 u=\lambda u-au^2+ b uv & \hbox{in}\;\;\Omega,\\
d_2\mathcal{L}_2 v=\mu v-dv^2+ c uv & \hbox{in}\;\;\Omega,\\
\mathcal{B}_1 u=\mathcal{B}_2 v=0 & \hbox{on}\;\;\partial\Omega,
\end{array}
\right.
\end{equation}
where $d_1$, $d_2>0$ measure the strength of the diffusivities of the species $u$ and $v$, $\lambda$, $\mu\in\mathcal{C}(\bar\Omega)$ stand for the growth, or decay, rates of the species, $a$, $d\in\mathcal{C}(\bar\Omega;(0,+\infty))$ are the intra-specific rates of $u$ and $v$, respectively,
and $b$, $c\in\mathcal{C}(\bar\O;(0,+\infty))$ represent the symbiotic effects between both populations.
It is said that \emph{low symbiotic effects} occur whenever
\begin{equation*}
  bc\lneq ad\quad \hbox{in}\;\; \Omega.
\end{equation*}
In such case, setting
\begin{equation}
\label{5.2}
F_\pm(t):=\frac{1}{8}\left[ 27-18t - t^2\pm(9-t)^{3/2}(1-t)^{1/2}\right],\qquad
t \in[0,1],
\end{equation}
the next result holds.

\begin{theorem}
\label{th5.1}
Suppose that $\kappa:=\frac{bc}{ad}\lneq 1$ in $\Omega$, and that
\begin{equation}
\label{5.3}
\max_{\bar\Omega} \left( \frac{a d^2}{c^3}F_-(\kappa)\right) \le \min_{\bar\Omega} \left( \frac{a d^2}{c^3}F_+(\kappa)\right).
\end{equation}
Then,  every coexistence state of \eqref{5.1} is linearly stable. Moreover, \eqref{5.3} holds  provided some of the quotients $\frac{ad^2}{c^3}$, or $\frac{bd}{c^2}$, or $\frac{b^2}{ac}$, or $\frac{b^3}{a^2d}$, is constant on $\Omega$. Note that
\[
  \frac{bd}{c^2} = \frac{ad^2}{c^3}\kappa,\qquad \frac{b^2}{ac}=\frac{bd}{c^2}\kappa=\frac{ad^2}{c^3}\kappa^2,\qquad
  \frac{b^3}{a^2d}=\frac{b^2}{ac}\kappa=\frac{bd}{c^2}\kappa^2=\frac{ad^2}{c^3}\kappa^3.
\]
\end{theorem}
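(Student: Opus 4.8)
The plan is to show that the principal eigenvalue of the linearization of \eqref{5.1} at an arbitrary coexistence state is strictly positive, by a double application of Theorem \ref{th2.1} with the quadratic weight $g(t)=t^{2}$, and then to deduce \eqref{5.3} from the constancy of any of the listed quotients by a monotonicity analysis of $F_\pm$. So, let $(u,v)$ be a coexistence state, so $u,v\gg0$ and $(d_1\mathcal{L}_1-\lambda+au-bv)u=0$, $(d_2\mathcal{L}_2-\mu+dv-cu)v=0$. The linearized stability of $(u,v)$ as a steady state of the parabolic counterpart of \eqref{5.1} is governed by the sign of the principal eigenvalue $\Sigma:=\sigma[\mathcal{M},(\mathcal{B}_1,\mathcal{B}_2),\Omega]$ of the cooperative matrix operator
\[
\mathcal{M}:=\begin{pmatrix} d_1\mathcal{L}_1-\lambda+2au-bv & -bu\\ -cv & d_2\mathcal{L}_2-\mu+2dv-cu\end{pmatrix},
\]
whose off-diagonal entries $-bu,-cv$ are nonpositive; hence $\Sigma$ is real, carries a strongly positive eigenfunction $(\phi,\psi)$, and $(u,v)$ is linearly stable if and only if $\Sigma>0$. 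Since $u,\phi$ (resp.\ $v,\psi$) are strongly positive and share the boundary operator $\mathcal{B}_1$ (resp.\ $\mathcal{B}_2$), the ratios $\xi:=\phi/u$ and $\eta:=\psi/v$ lie in $\mathcal{C}^1(\bar\Omega)$, even across the Dirichlet parts of $\partial\Omega$, so Theorem \ref{th2.1} applies.

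I would use it with $g(t)=t^{2}$ on the pair $(u,\phi)$ and the operator $d_1\mathcal{L}_1$; invoking the equations for $u$ and $\phi$ and the fact that the boundary integral drops out (on each component of $\partial\Omega$ either $\mathcal{D}u=\mathcal{D}\phi=0$ or $\mathcal{R}u=\mathcal{R}\phi=0$) yields
\[
\Sigma\int_\Omega u^{2}\xi^{3}=2\int_\Omega u^{2}\xi\,\langle\nabla\xi,d_1A_1\nabla\xi\rangle+\int_\Omega au^{3}\xi^{3}-\int_\Omega bu^{2}v\,\xi^{2}\eta,
\]
together with its mirror image obtained by replacing $(u,\xi,a,b)$ with $(v,\eta,d,c)$. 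The weight $g(t)=t^{2}$ is forced here: it is the only power for which, after substituting $\phi=u\xi$, $\psi=v\eta$, the leftover factors $u^{2-r},v^{2-r}$ cancel, so that the outcome depends only on the coefficients $a,b,c,d$ and not on the unknown profiles $u,v$. Combining the two identities with positive constants $\alpha,\beta$, the gradient terms are $\ge0$ (as $A_1,A_2$ are positive definite), and the remainder equals $\int_\Omega\psi^{3}P(\phi/\psi)$ with $P(Z):=\alpha aZ^{3}-\alpha bZ^{2}-\beta cZ+\beta d$; hence $\Sigma\ge0$ as soon as $P\ge0$ on $(0,+\infty)$ pointwise on $\bar\Omega$.

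Writing $\gamma:=\beta/\alpha$ and imposing that $aZ^{3}-bZ^{2}-\gamma cZ+\gamma d$ have a positive double root, one finds that (when $\kappa<1$) this cubic is nonnegative on $(0,+\infty)$ precisely for $\gamma\in\bigl[\tfrac{ad^{2}}{c^{3}}F_-(\kappa),\tfrac{ad^{2}}{c^{3}}F_+(\kappa)\bigr]$; indeed $F_\pm(\kappa)$ turn out to be exactly the two roots of $4y^{2}-(27-18\kappa-\kappa^{2})y+4\kappa^{3}=0$, i.e.\ the functions \eqref{5.2}, so the two critical values of $\gamma$ are $\tfrac{ad^{2}}{c^{3}}F_\pm(\kappa)$. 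Since $\tfrac{ad^{2}}{c^{3}}F_\pm(\kappa)$ are continuous on $\bar\Omega$, \eqref{5.3} says precisely that these compact intervals have a common point $\gamma>0$ as $x$ ranges over $\bar\Omega$; fixing such a $\gamma$ gives $\Sigma\ge0$. To upgrade this to $\Sigma>0$ I would argue by rigidity: if $\Sigma=0$ then all three nonnegative terms in the combined identity vanish, hence $\nabla\xi=\nabla\eta=0$, i.e.\ $\phi=c_1u$, $\psi=c_2v$ with $c_1,c_2>0$; substituting into the eigenfunction equations gives $c_1au=c_2bv$ and $c_2dv=c_1cu$, and multiplying these forces $ad=bc$ on $\Omega$, contradicting $\kappa\lneq1$. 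Therefore $\Sigma>0$, i.e.\ every coexistence state is linearly stable.

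Finally, the four quotients are $\tfrac{ad^{2}}{c^{3}}\kappa^{\,j}$, $j=0,1,2,3$; if $\tfrac{ad^{2}}{c^{3}}\kappa^{\,j}\equiv M$ on $\Omega$ then $\tfrac{ad^{2}}{c^{3}}F_\pm(\kappa)=M\,F_\pm(\kappa)/\kappa^{\,j}$, so \eqref{5.3} reduces to $\max_{\bar\Omega}(F_-(\kappa)/\kappa^{\,j})\le\min_{\bar\Omega}(F_+(\kappa)/\kappa^{\,j})$. Setting $S_\pm:=3+t\pm\sqrt{(1-t)(9-t)}$, one checks $S_+S_-=16t$, $S_++S_-=6+2t$ and $F_\pm=\tfrac1{256}S_\pm^{3}(8-S_\mp)$, whence $F_\mp(t)/t^{\,j}=\tfrac{16^{j}}{256}\,S_\mp^{\,3-j}(8-S_\pm)S_\pm^{-j}$. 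As $\sqrt{(1-t)(9-t)}$ has negative derivative on $(0,1)$, $S_-$ increases and $S_+$ decreases there, so each factor of $F_-(t)/t^{\,j}$ is nonnegative and nondecreasing and each factor of $F_+(t)/t^{\,j}$ is nonnegative and nonincreasing; thus $t\mapsto F_-(t)/t^{\,j}$ is nondecreasing and $t\mapsto F_+(t)/t^{\,j}$ nonincreasing on $(0,1]$, and the required max--min inequality then holds automatically because $F_-\le F_+$. The crux of the whole argument --- and the step most prone to computational error --- is the identification of the two critical values of $\gamma$ with $\tfrac{ad^{2}}{c^{3}}F_\pm(\kappa)$; the rest is the spectral theory of cooperative systems and careful bookkeeping for Theorem \ref{th2.1}.
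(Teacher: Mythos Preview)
Your argument is correct and follows the same backbone as the paper: apply the Picone identity of Theorem~\ref{th2.1} with $g(t)=t^{2}$ to the pairs $(u,\phi)$ and $(v,\psi)$, combine with a positive weight, and reduce to the pointwise nonnegativity of a cubic in $\phi/\psi$; then argue rigidity when $\Sigma=0$. The two places where your write-up departs from the paper are both legitimate alternatives rather than gaps. First, to identify the admissible range of the coupling constant you locate the positive double roots of $aZ^{3}-bZ^{2}-\gamma cZ+\gamma d$ and read off $\gamma=\tfrac{ad^{2}}{c^{3}}F_\pm(\kappa)$ as the roots of $4y^{2}-(27-18\kappa-\kappa^{2})y+4\kappa^{3}=0$; the paper instead reparametrizes by $z=\tfrac{c}{d}y$ and optimizes the rational function $z^{2}(z-\kappa)/(z-1)$ directly, arriving at the same $F_\pm$ after an explicit calculation. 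Second, for the ``constant quotient'' clause the paper uses the static chain $F_-(k)\le k^{3}\le k^{2}\le k\le 1\le F_+(k)$ to exhibit a separator, whereas you prove the sharper fact that $F_-(t)/t^{j}$ is nondecreasing and $F_+(t)/t^{j}$ nonincreasing on $(0,1]$ via the factorization through $S_\pm$; this yields the max--min inequality by evaluating both sides at $\kappa_{\max}$. Your route here gives slightly more information (monotonicity rather than a mere inequality) at the cost of the algebraic identity $F_\pm=\tfrac{1}{256}S_\pm^{3}(8-S_\mp)$, which is correct but deserves a one-line verification.
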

\begin{proof}
Let $(u,v)$ be  a coexistence state of \eqref{5.1}. Then, the linearized stability of $(u,v)$ as
an steady-state solution of the parabolic counterpart of \eqref{5.1} is given by the signs of the eigenvalues of the linear eigenvalue problem
\begin{equation}
\label{5.4}
\left\{
\begin{array}{ll}
[d_1\mathcal{L}_1 -\lambda +2au- b v]\varphi- b u\psi=\sigma\varphi & \hbox{in}\;\;\Omega,\\[0ex]
[d_2\mathcal{L}_2 -\mu +2dv -  c u]\psi- c v\varphi=\sigma \psi & \hbox{in}\;\;\Omega,\\[0ex]
\mathcal{B}_1 \varphi=\mathcal{B}_2 \psi=0 & \hbox{on}\;\;\partial\Omega.
\end{array}
\right.
\end{equation}
Since $b(x)u(x)>0$ and $c(x)v(x)>0$ for all $x\in\O$, it follows from  \cite[Th. 1.3]{ALLG} that \eqref{5.4} possesses a unique principal eigenvalue, $\sigma_0$, associated with it there is an eigenfunction $(\v,\psi)$ with $\v\gg 0$ and $\psi\gg 0$.  This result extends the findings of
\cite{LGMM} to cover our general setting even in the context of periodic-parabolic problems.
\par
Particularizing \eqref{5.4} at $\s=\s_0$, multiplying the first equation of \eqref{5.4} by $u$ and using the $u$-equation of \eqref{5.1} yields
\begin{equation}
\begin{split}
\label{5.5}
\sigma_0 u \varphi
&=u d_1\mathcal{L}_1\varphi-\varphi(\lambda-a u+ b v)u+ u^2(a\varphi-b\psi)\\
&=d_1(u\mathcal{L}_1\varphi-\varphi\mathcal{L}_1 u)+u^2(a\varphi-b\psi).
\end{split}
\end{equation}
Similarly, multiplying the second equation of \eqref{5.4} by $v$ and using the $v$-equation
of \eqref{5.1}, it is easily seen that
\begin{equation}
\begin{split}
\label{5.6}
\sigma_0 v \psi
&=v d_2\mathcal{L}_2\psi-\psi(\mu-d v+ c u)v+ v^2(d\psi-c\varphi)\\
&=d_2(v\mathcal{L}_2\psi-\psi\mathcal{L}_2 v)+v^2(d\psi-c\varphi).
\end{split}
\end{equation}
Multiplying \eqref{5.5} and \eqref{5.6} by $\frac{\varphi^2}{u^2}$ and $\frac{\psi^2}{v^2}$, respectively, and integrating in $\Omega$ it becomes apparent that
\begin{equation*}
\sigma_0 \int_\Omega \frac{\varphi^3}{u} =d_1\int_\Omega \left(\frac{\varphi}{u}\right)^2(u\mathcal{L}_1\varphi-\varphi\mathcal{L}_1 u)+\int_\Omega \varphi^2(a\varphi-b\psi),
\end{equation*}
\begin{equation*}
\sigma_0 \int_\Omega \frac{\psi^3}{v} =d_2\int_\Omega \left(\frac{\psi}{v}\right)^2(v\mathcal{L}_2 \psi-\psi\mathcal{L}_2 v)+\int_\Omega \psi^2(d \psi-c\varphi).
\end{equation*}
On the other hand, applying \eqref{2.2} with $g(t)=t^2$ and using the uniform ellipticity of $\mathcal{L}_1$ and $\mathcal{L}_2$ provides us with the estimates
\begin{equation}
\label{5.7}
\begin{split}
\int_\Omega \left(\frac{\varphi}{u}\right)^2(u\mathcal{L}_1\varphi-\varphi\mathcal{L}_1 u)
&=\int_\Omega 2u\varphi\langle \nabla \frac{\varphi}{u},A_1\nabla\frac{\varphi}{u}\rangle - \int_{\partial\Omega}\left(\frac{\varphi}{u}\right)^2 [\mathcal{D} u\mathcal{R}\varphi-\mathcal{D}\varphi\mathcal{R} u]\\ &=\int_\Omega 2u\varphi\langle \nabla \frac{\varphi}{u},A_1\nabla\frac{\varphi}{u}\rangle\ge 0,
\end{split}
\end{equation}
and
\begin{equation}
\label{5.8}
\begin{split}
\int_\Omega \left(\frac{\psi}{v}\right)^2(v\mathcal{L}_2\psi-\psi\mathcal{L}_2 v)
&=\int_\Omega 2v\psi\langle \nabla \frac{\psi}{v},A_2\nabla\frac{\psi}{v}\rangle - \int_{\partial\Omega}[\mathcal{D} v\mathcal{R}\psi-\mathcal{D}\psi\mathcal{R} v]\\
&=\int_\Omega 2v\psi\langle \nabla \frac{\psi}{v},A_2\nabla\frac{\psi}{v}\rangle\ge 0,
\end{split}
\end{equation}
where we have used that
\begin{align*}
  \mathcal{D}u& =\mathcal{D}\varphi=0\quad \hbox{on}\;\; \Gamma_{\mathcal{D}}^1,\qquad
  \mathcal{R}u=\mathcal{R}\varphi=0\quad \hbox{on} \;\; \Gamma_{\mathcal{R}}^1,\\
  \mathcal{D}v & =\mathcal{D}\psi=0\quad \hbox{on} \;\; \Gamma_{\mathcal{D}}^2,\qquad \mathcal{R}v=\mathcal{R}\psi=0\quad \hbox{on} \;\; \Gamma_{\mathcal{R}}^2,
\end{align*}
as well as the fact that $u,v,\phi,\psi\gg 0$ in $\Omega$. Hence,
\begin{equation*}
\sigma_0 \int_\Omega \frac{\varphi^3}{u} \ge \int_\Omega \varphi^2(a\varphi-b\psi)\quad\hbox{and}\quad
\sigma_0 \int_\Omega \frac{\psi^3}{v} \ge -\int_\Omega \psi^2(c\varphi-d\psi).
\end{equation*}
Therefore, for every positive constant $\xi>0$, we find that
\begin{equation}
\label{5.9}
\sigma_0 \left(\int_\Omega \frac{\varphi^3}{u}+\xi \int_\Omega \frac{\psi^3}{v}\right)
\ge \int_\Omega \left[\varphi^2(a\varphi-b\psi)-\xi\psi^2(c\varphi-d\psi)\right].
\end{equation}
Next, we will ascertain the values of $\xi>0$ for which
\begin{equation*}
\varphi^2(a\varphi-b\psi)-\xi\psi^2(c\varphi-d\psi)\geq 0\quad \hbox{in}\;\;\Omega.
\end{equation*}
Dividing by $\psi^3$ and setting $y:=\varphi/\psi$, it suffices to show that, for every $y\ge 0$, $\xi$ satisfies
\begin{equation*}
y^2(a y-b)-\xi(c y-d)\geq 0\quad \hbox{in}\;\;\Omega.
\end{equation*}
Further, setting $z=\frac{c}{d}y\ge 0$ and dividing by $d$ yields
\begin{equation}
\label{5.10}
\frac{a d^2}{c^3}z^2\left(z-\frac{b c}{a d}\right)\geq \xi(z-1)\quad \hbox{in}\;\;\Omega.
\end{equation}
Note that, since $\kappa=\frac{bc}{ad} \leq 1$ in $\bar \Omega$, \eqref{5.10} holds if $z\in\{0,1\}$ for every $\xi > 0$. Hence, the inequality \eqref{5.10} can be split into
\begin{equation*}
\frac{a d^2}{c^3}z^2\frac{z-\kappa}{z-1}\geq \xi\quad\hbox{in}\;\;\Omega\;\;\hbox{for all}\;\; z>1,
\end{equation*}
and
\begin{equation*}
\frac{a d^2}{c^3}z^2\frac{z-\kappa}{z-1}\leq \xi\quad\hbox{in}\;\;\Omega\;\;\hbox{for all}\;\; z\in(0,1).
\end{equation*}
Therefore, in order to get \eqref{5.10} for all $z\ge 0$ and $x\in\O$ it suffices to make sure that the constant $\xi$ satisfies
\begin{equation}
\label{5.11}
\frac{a d^2}{c^3}\sup_{0< z<1} \left(z^2\frac{z-\kappa}{z-1}\right)\leq \xi\leq \frac{a d^2}{c^3} \inf_{z>1} \left(z^2\frac{z-\kappa}{z-1}\right) \quad \hbox{in}\;\;\Omega.
\end{equation}
Subsequently we consider the function
\begin{equation*}
    F(z;k):=z^2\frac{z-k}{z-1},\qquad z >0,\;\; z\neq 1,\quad k \in (0,1].
\end{equation*}
Note that here $k$ is a constant, not a function, like $\kappa:=\kappa(x)$.
By differentiating with respect to $z$ yields
\begin{equation*}
     \frac{d F}{d z}(z;k)=\frac{(3z^2-2k z)(z-1)-(z^3-k z^2)}{(z-1)^2}=
     \frac{2 z^3-(3+k)z^2+2 k z}{(z-1)^2}.
\end{equation*}
Thus, the critical points of $F(\cdot,k)$ are given by the roots of
\begin{equation*}
  (2z^2-(3+k)z+2k)z=0,
\end{equation*}
which are $z=0$ plus
\begin{equation*}
   z_\pm(k):=\frac{3+k\pm \sqrt{(3+k)^2-16k}}{4}=\frac{3+k\pm \sqrt{(9-k)(1-k)}}{4}.
\end{equation*}

\begin{figure}[h!]
\centering
\includegraphics[scale=0.47]{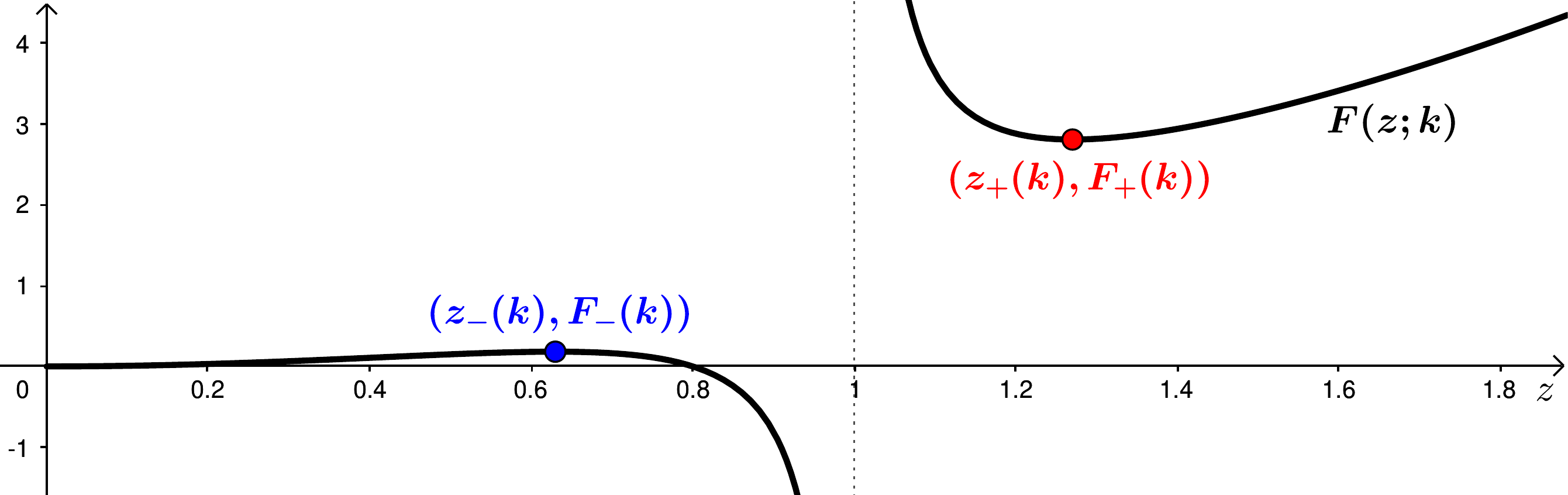}
\caption{A plot of the function $F$ for $k=0.8$}
\label{Fig4}
\end{figure}

It is straightforward to check that $F(\cdot,k)$ has local minimum at $z_{+}(k)\in(1,+\infty)$, which is global in $z\in(1+\infty)$,
while it has a local maximum at $z_{-}(k)\in (0,1)$, which is global in $z\in[0,1)$. Moreover,
\begin{align*}
F_{\pm}(k)&:=F(z_{\pm}(k);k)= z_\pm^2(k)\frac{z_\pm(k)-k}{z_\pm(k)-1} \\[1ex]
&=\frac{\left((3+k)^2-8k\pm(3+k)\sqrt{(9-k)(1-k)}\right)
\left(3-3k\pm\sqrt{(9-k)(1-k)}\right)}{8\left(-1+k\pm\sqrt{(9-k)(1-k)}\right)} \\[1ex]
&=\frac{\left((3+k)^2-8k\pm(3+k)\sqrt{(9-k)(1-k)}\right)\left(3\sqrt{1-k}\pm\sqrt{9-k}  \right)}{8\left(-\sqrt{1-k}\pm\sqrt{9-k}\right)}
\end{align*}
and, rationalizing and simplifying, we find that
\begin{align*}
F_{\pm}(k)&=\frac{\left((3+k)^2-8k\pm(3+k)\sqrt{(9-k)(1-k)}\right)\left(3-k\pm\sqrt{(9-k)(1-k)}  \right)}{16}  \\
&=\frac{1}{8} \left[ -k^2-18k+27\pm(9-k)\sqrt{(9-k)(1-k)}\right].
\end{align*}
Hence, the condition \eqref{5.11} can be rewritten, equivalently,  as
\begin{equation}
\label{5.12}
\frac{a d^2}{c^3}F_-(\kappa)\leq \xi\leq \frac{a d^2}{c^3} F_+(\kappa) \quad \hbox{in}\;\;\Omega.
\end{equation}
Therefore, if there exists a constant $\xi>0$ satisfying \eqref{5.12}, then \eqref{5.9} yields $\sigma_0\ge 0$. Naturally, the condition \eqref{5.3} guarantees the existence of $\xi>0$ such that \eqref{5.12} holds. Let us check that actually $\sigma_0>0$. Indeed, arguing by contradiction, assume that $\sigma_0=0$. We claim that, in such case,
\[
  \int_\Omega 2u\varphi\langle \nabla \frac{\varphi}{u},A_1\nabla\frac{\varphi}{u}\rangle =
  \int_\Omega 2v\psi\langle \nabla \frac{\psi}{v},A_2\nabla\frac{\psi}{v}\rangle =0.
\]
Indeed, if either
\[
  \int_\Omega 2u\varphi\langle \nabla \frac{\varphi}{u},A_1\nabla\frac{\varphi}{u}\rangle >0, \quad
  \hbox{or}\;\;  \int_\Omega 2v\psi\langle \nabla \frac{\psi}{v},A_2\nabla\frac{\psi}{v}\rangle >0,
\]
then,
\begin{equation*}
\sigma_0 \int_\Omega \frac{\varphi^3}{u} > \int_\Omega \varphi^2(a\varphi-b\psi),\quad\hbox{or}\quad
\sigma_0 \int_\Omega \frac{\psi^3}{v} >  -\int_\Omega \psi^2(c\varphi-d\psi),
\end{equation*}
respectively and, hence, by the choice of $\xi$, we find that
\begin{equation*}
0=\sigma_0 \left(\int_\Omega \frac{\varphi^3}{u}+\xi \int_\Omega \frac{\psi^3}{v}\right)
> \int_\Omega \left[\varphi^2(a\varphi-b\psi)-\xi\psi^2(c\varphi-d\psi)\right]\geq 0,
\end{equation*}
which is a contradiction. Thus, since $A_1$ and $A_2$ are definite positive,
it becomes apparent that $\varphi$ and $\psi$ are proportional to $u$ and $v$, respectively.
Consequently, going back to \eqref{5.5} and \eqref{5.6}, we find that
\begin{equation*}
0=\varphi^2(a\varphi-b\psi)\quad\hbox{and}\quad 0=\psi^2(d\psi-c\varphi)\quad\hbox{in}\;\;\Omega,
\end{equation*}
or, equivalently,
\begin{equation*}
0= a\varphi-b\psi=d\psi-c\varphi\quad\hbox{in}\;\;\Omega,
\end{equation*}
which implies $ad=bc$ in $\O$ and contradicts our assumption that $\kappa =\frac{bc}{ad}\lneq 1$ in $\Omega$. This contradiction shows that $\s_0>0$ and ends the proof of the first assertion of the theorem.
\par
The validity of the second assertion of the theorem can be easily shown from the fact that
\begin{equation*}
\frac{bd}{c^2}=\kappa\frac{ad^2}{c^3},\qquad  \frac{b^2}{ac}=\kappa^{2}\frac{ad^2}{c^3},\qquad
\frac{b^3}{a^2d}=\kappa^{3}\frac{ad^2}{c^3},
\end{equation*}
taking into account that
\begin{equation*}
F_-(k)\le k^3\le k^2 \le k \le 1 \le F_+(k)\quad \hbox{for all}\;\; k\in[0,1].
\end{equation*}

\begin{figure}[h!]
\centering
\includegraphics[scale=0.5]{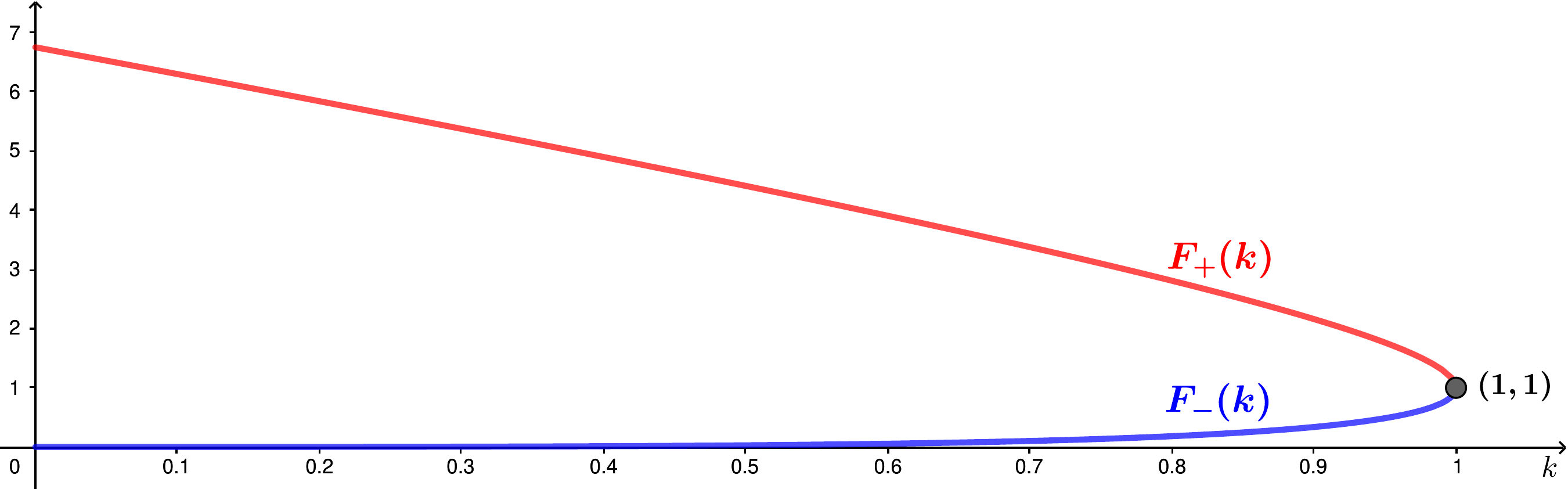}
\caption{Plots of the functions $F_-$ and $F_+$ with respect to $k$.}
\label{Fig5}
\end{figure}

Indeed, for every $k\in[0,1]$,
\begin{align*}
F_+(k)-1&=\frac{1}{8} \left[ -k^2-18k+19+(9-k)\sqrt{(9-k)(1-k)}\right]\\[1ex]
&=\frac{1}{8} \left[ (19+k)(1-k)+(9-k)\sqrt{(9-k)(1-k)}\right]\ge 0
\end{align*}
and
\begin{align*}
k^3-F_-(k)&=\frac{1}{8} \left[8k^3 +k^2+18k-27+(9-k)\sqrt{(9-k)(1-k)}\right] \\[1ex]
&=\frac{1}{8}\left[-(8k^2+9k+27)(1-k)+(9-k)\sqrt{(9-k)(1-k)}\right]\\[1ex]
&=\frac{1}{8}\;\frac{16k^3(1-k)(4k^2+5k+23)}{(8k^2+9k+27)(1-k)+(9-k)\sqrt{(9-k)(1-k)}}\ge 0.
\end{align*}
This ends the proof.
\end{proof}

Note that, in particular, Theorem \ref{th5.1} always applies when $a$, $b$, $c$ and $d$ are positive constants such that $\kappa =bc/ad<1$. Very simple examples show that Theorem \ref{th5.1} might not be true if $\kappa\geq 1$. Indeed, suppose that $\mc{L}_1=\mc{L}_2=\mc{L}$, $\mc{B}_1=\mc{B}_2=\mc{B}$, $d_1=d_2=1$, $\l=\mu$, $a=d=1$ and $b=c$ with $\kappa=b^2\geq 1$. Then, $b\geq 1$ and the problem \eqref{5.1} becomes
\begin{equation}
\label{5.13}
\left\{
\begin{array}{ll}
\mathcal{L} u=\lambda u-u^2+ b uv & \hbox{in}\;\;\Omega,\\
\mathcal{L} v=\l v-v^2+ b uv & \hbox{in}\;\;\Omega,\\
\mathcal{B} u=\mathcal{B} v=0 & \hbox{on}\;\;\partial\Omega.
\end{array}
\right.
\end{equation}
Thus, for any positive solution of
\begin{equation}
\label{5.14}
\left\{
\begin{array}{ll}
\mathcal{L} w=\l w+(b-1)w^2 & \hbox{in}\;\;\Omega,\\
\mathcal{B} w=0 & \hbox{on}\;\;\partial\Omega,
\end{array}
\right.
\end{equation}
the pair $(w,w)$ provides us with a coexistence state of \eqref{5.13}. As for $b=1$ and
$\l=\s[\mc{L},\mc{B},\O]$, \eqref{5.14} possesses infinitely many positive solutions,
for this choice of the values of the parameters, \eqref{5.13} admits multiplicity of coexistence states. Similarly,  if $b>1$, it is folklore that \eqref{5.14} can admit multiple positive solutions for
some range of $\l$'s bellow $\s[\mc{L},\mc{B},\O]$ if $\O$ has the appropriate geometry (see, e.g.,
E. N. Dancer \cite{Dancer}). Therefore, the condition  $\kappa<1$ is optimal for the validity of
Theorem \ref{th5.1} under constant coefficients.
\par
Theorem \ref{th5.1}  provides us with the linear stability of the coexistence states of
\eqref{5.1} (and, hence, their uniqueness as it will become later) not only in the case of constant coefficients, but also in a wide range of situations where two among the coefficients $a$, $b$, $c$ and $d$ are arbitrary while the remaining ones are chosen so that all assumptions of Theorem \ref{th5.1} are fulfilled. For example, choose $c$ and $d$ arbitrary and pick $\eta>0$, take $a:=\frac{\eta c^3}{d^2}$, and finally choose any function $b$ such that
\[
  b\lneq \frac{ad}{c}=\frac{\eta c^2}{d}.
\]
Note that $b$ can be arbitrary by choosing $\eta$ sufficiently large. Another advantage of Theorem \ref{th5.1} is that it provides us with a method that guarantees the linearized stability of
any coexistence state though an easily computable condition.
\par
It is worth-emphasizing that the growth rates of the species, $\lambda$ and $\mu$, do not play any role in Theorem \ref{th5.1}. However, they are ultimately responsible of the dynamics of the associated  non-spatial model ($d_1=d_2=0$). Thus, for any given domain $\Omega$ and functions $a$, $b$, $c$ and $d$ satisfying the hypothesis of Theorem \ref{th5.1}, $\lambda$ and $\mu$ can be chosen so that, for every $x\in\O$, the non-spatial model
\begin{equation}
\label{5.15}
\left\{
\begin{array}{l}
u'=\lambda(x) u-a(x)u^2+ b(x) uv,\\[1ex]
v'=\mu(x) v-d(x) v^2+ c(x) uv,\end{array}
\right.
\end{equation}
can exhibit any desired low-symbiosis dynamics, as soon as it respects the continuity of $\lambda$ and $\mu$. This feature reveals the huge versatility of Theorem \ref{th5.1}, for as it can be applied
independently of the underlying non-spatial dynamics of \eqref{5.15}.
\par
The next by-products of Theorem \ref{th5.1} show the role played by variable diffusion rates in the symbiotic system \eqref{5.1}.

\begin{corollary}
\label{co5.2}
Assume that $d_1,d_2\in\mathcal{C}(\bar\Omega;(0,+\infty))$, $\kappa=\frac{bc}{ad}\lneq 1$ in $\Omega$, and
\begin{equation*}
\max_{\bar{\Omega}}\left( \frac{ad^2}{c^3}\frac{d_2}{d_1}F_{-}(\kappa)\right) \le \min_{\bar{\Omega}}\left(\frac{ad^2}{c^3}\frac{d_2}{d_1}F_{+}(\kappa)\right),
\end{equation*}
with $F_\pm$ defined as in \eqref{5.2}. Then, every coexistence state of
\begin{equation}
\label{5.16}
\left\{
\begin{array}{ll}
d_1(x)\mathcal{L}_1 u=\lambda u-au^2+ b uv & \hbox{in}\;\;\Omega,\\
d_2(x)\mathcal{L}_2 v=\mu v-dv^2+ c uv & \hbox{in}\;\;\Omega,\\
\mathcal{B}_1 u=\mathcal{B}_2 v=0 & \hbox{on}\;\;\partial\Omega,
\end{array}
\right.
\end{equation}
is linearly stable. In particular, there exist functions $d_1(x),d_2(x)\in\mathcal{C}(\bar\Omega;(0,+\infty))$, as close to zero as desired, such that every coexistence state of \eqref{5.16} is linearly stable.
\end{corollary}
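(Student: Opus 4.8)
\noindent The plan is to mimic, essentially line by line, the proof of Theorem \ref{th5.1}, the only genuine novelty being the way the spatially dependent diffusivities enter. Note first that $d_1(x)\mathcal{L}_1$ and $d_2(x)\mathcal{L}_2$ are no longer of the divergence form required by Theorem \ref{th2.1}; the remedy is to apply the Picone identity \eqref{2.2} to the operators $\mathcal{L}_1$, $\mathcal{L}_2$ \emph{themselves} and to let the positive factors $1/d_1(x)$, $1/d_2(x)$ sit inside the integrals as harmless weights.

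So let $(u,v)$ be a coexistence state of \eqref{5.16} and let $\sigma_0$ be the principal eigenvalue of the linearized problem \eqref{5.4} with $d_j$ replaced by $d_j(x)$, with principal eigenfunction $(\varphi,\psi)$, $\varphi,\psi\gg 0$; its existence is guaranteed by \cite[Th.~1.3]{ALLG}, exactly as in Theorem \ref{th5.1}. Multiplying the $\varphi$-equation by $u$ and subtracting $\varphi$ times the $u$-equation of \eqref{5.16} reproduces \eqref{5.5} in the form
\[
 \sigma_0 u\varphi = d_1(x)\bigl(u\mathcal{L}_1\varphi-\varphi\mathcal{L}_1 u\bigr)+u^2(a\varphi-b\psi),
\]
and symmetrically $\sigma_0 v\psi = d_2(x)\bigl(v\mathcal{L}_2\psi-\psi\mathcal{L}_2 v\bigr)+v^2(d\psi-c\varphi)$. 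Since each $d_j$ is continuous and bounded below by a positive constant on $\bar\Omega$, I would solve these identities for $u\mathcal{L}_1\varphi-\varphi\mathcal{L}_1 u$ and $v\mathcal{L}_2\psi-\psi\mathcal{L}_2 v$, multiply them by $(\varphi/u)^2$ and $(\psi/v)^2$, integrate over $\Omega$, and then apply \eqref{2.2} with $g(t)=t^2$ to the pairs $(u,\varphi)$, $(v,\psi)$ --- whose quotients belong to $\mathcal{C}^1(\bar\Omega)$ and whose boundary contributions vanish on each component of $\partial\Omega$, just as in \eqref{5.7}--\eqref{5.8}. Using that $A_1,A_2$ are positive definite this delivers
\[
 \sigma_0\int_\Omega\frac{\varphi^3}{d_1 u}\geq\int_\Omega\frac{\varphi^2(a\varphi-b\psi)}{d_1},\qquad \sigma_0\int_\Omega\frac{\psi^3}{d_2 v}\geq-\int_\Omega\frac{\psi^2(c\varphi-d\psi)}{d_2}.
\]
Weighting the second estimate by a constant $\xi>0$ and adding, the sign of the right-hand side is controlled once $\xi$ can be chosen so that $d_2(x)\varphi^2(a\varphi-b\psi)\geq\xi\,d_1(x)\psi^2(c\varphi-d\psi)$ in $\Omega$; dividing by $d_1\psi^3$, setting $y=\varphi/\psi$ and then $z=(c/d)y$ exactly as in the passage from \eqref{5.10} to \eqref{5.12} simply replaces the coefficient $ad^2/c^3$ appearing there by $\tfrac{d_2}{d_1}\tfrac{ad^2}{c^3}$. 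Hence it suffices to pick a constant $\xi>0$ with $\tfrac{d_2}{d_1}\tfrac{ad^2}{c^3}F_-(\kappa)\leq\xi\leq\tfrac{d_2}{d_1}\tfrac{ad^2}{c^3}F_+(\kappa)$ in $\Omega$, and such a $\xi$ exists precisely under the stated hypothesis. This gives $\sigma_0\geq 0$, and the strict inequality $\sigma_0>0$ then follows by the same contradiction argument as in Theorem \ref{th5.1}: $\sigma_0=0$ would force the Dirichlet integrals $\int_\Omega 2u\varphi\langle\nabla\tfrac{\varphi}{u},A_1\nabla\tfrac{\varphi}{u}\rangle$ and $\int_\Omega 2v\psi\langle\nabla\tfrac{\psi}{v},A_2\nabla\tfrac{\psi}{v}\rangle$ to vanish, hence $\varphi\propto u$ and $\psi\propto v$, and substituting this back into the two displayed identities above yields $a\varphi=b\psi$ and $c\varphi=d\psi$ in $\Omega$, forcing $ad=bc$ in $\Omega$, against $\kappa\lneq 1$.

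For the last assertion, given $a,b,c,d$ with $\kappa\lneq 1$ the natural choice is to make the product $\tfrac{d_2}{d_1}\tfrac{ad^2}{c^3}$ constant; for instance $d_1(x):=\varepsilon$ and $d_2(x):=\varepsilon\,\tfrac{c^3}{ad^2}$ with $\varepsilon>0$, which belong to $\mathcal{C}(\bar\Omega;(0,+\infty))$ and satisfy $\tfrac{d_2}{d_1}\tfrac{ad^2}{c^3}\equiv 1$. The hypothesis of the corollary then reduces to $\max_{\bar\Omega}F_-(\kappa)\leq\min_{\bar\Omega}F_+(\kappa)$, which holds because $F_-(k)\leq 1\leq F_+(k)$ for every $k\in[0,1]$, as established in the proof of Theorem \ref{th5.1}; and letting $\varepsilon\downarrow 0$ makes both $d_1$ and $d_2$ uniformly as small as desired. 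The main obstacle, such as it is, is essentially conceptual: one must realize that replacing constant diffusivities by variable ones only inserts the positive weights $1/d_j(x)$ into the integrals of Theorem \ref{th5.1} without altering any of the sign structure that drives the estimate \eqref{5.9}; once this is seen, the rest is routine bookkeeping.
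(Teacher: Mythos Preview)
Your argument is correct and arrives at the same conclusion as the paper, but the paper's proof is considerably shorter: it simply observes that dividing the $u$-equation of \eqref{5.16} by $d_1(x)$ and the $v$-equation by $d_2(x)$ recasts the system in the form \eqref{5.1} with unit diffusivities and modified coefficients $\tilde a=a/d_1$, $\tilde b=b/d_1$, $\tilde c=c/d_2$, $\tilde d=d/d_2$, for which $\tilde\kappa=\kappa$ and $\tilde a\tilde d^2/\tilde c^3=(d_2/d_1)\,ad^2/c^3$, and then invokes Theorem \ref{th5.1} as a black box. What you have written is precisely the unrolling of that citation: the Picone identity applied to $\mathcal{L}_j$ with the weights $1/d_j(x)$ inside the integrals is exactly what one obtains by running the proof of Theorem \ref{th5.1} on the divided system. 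Your version has the modest advantage of working directly with the principal eigenvalue $\sigma_0$ of the linearization of \eqref{5.16} itself, rather than of the rescaled system; the two principal eigenvalues share the same sign (positive weights do not affect the sign of the principal eigenvalue of a cooperative system), but the paper leaves this equivalence implicit. For the final assertion your choice $d_1=\varepsilon$, $d_2=\varepsilon\,c^3/(ad^2)$ and the paper's choice $d_1=\varepsilon\,ad^2$, $d_2=\varepsilon\,c^3$ both force $(d_2/d_1)\,ad^2/c^3\equiv 1$ and are equally valid.
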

\begin{proof}
The first assertion is an immediate consequence of Theorem \ref{th5.1}, by dividing by $d_1$ the $u$-equation and by $d_2$ the $v$-equation. For the second assertion, since $F_-(\kappa)\le 1\le F_+(\kappa)$, it suffices to take $d_1:=\varepsilon a d^2$ and $d_2:=\varepsilon c^3$ with  $\varepsilon>0$ arbitrary.
\end{proof}

The next result guarantees the existence of, at most, a coexistence state based  on the existence and linearized stability of the semitrivial solutions,  $(\theta_{\{d_1,\lambda,a\}},0)$ and $(0,\theta_{\{d_2,\mu,d\}})$. 

\begin{theorem}
\label{th5.3}
Assume $d_1,d_2\in\mathcal{C}(\bar\Omega;(0,+\infty))$,
\begin{equation}
\label{5.17}
\max_{\bar\Omega}\left(\frac{c}{d}\right)<\min_{\bar\Omega}\left(\frac{a}{b}\right),
\end{equation}
and
\begin{equation*}
\max_{\bar{\Omega}}\left( \frac{ad^2}{c^3}\frac{d_2}{d_1}F_{-}(\kappa)\right)\le \min_{\bar{\Omega}}\left(\frac{ad^2}{c^3}\frac{d_2}{d_1}F_{+}(\kappa)\right).
\end{equation*}
Then:
\begin{enumerate}[{\rm (a)}]
\item\label{th5.3.item1} If some of the semitrivial solutions exists and it is linearly unstable, then the system \eqref{5.16} admits a unique coexistence state. Moreover, the coexistence state  is linearly stable and, actually, it is a global attractor for the component-wise positive solutions of the parabolic counterpart of \eqref{5.16}.
\item\label{th5.3.item2} In any other case, \eqref{5.16} cannot admit admit coexistence states.
\end{enumerate}
\end{theorem}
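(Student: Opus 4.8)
\noindent\emph{Proof strategy.} The plan is to count the coexistence states of \eqref{5.16} through the fixed point index in the positive cone $K$ of the space of admissible states, and then to promote the resulting uniqueness to global attractivity by exploiting that \eqref{5.16} is a cooperative system. Two preliminary inputs are needed. First, \emph{a priori bounds}: evaluating the $u$-equation of \eqref{5.16} at a point where $u$ attains its maximum and using the maximum principle for $d_1\mc{L}_1$ gives $\|u\|_\infty\le C_1+\max_{\bar\O}(b/a)\|v\|_\infty$ and, symmetrically, $\|v\|_\infty\le C_2+\max_{\bar\O}(c/d)\|u\|_\infty$; since \eqref{5.17} forces $\max_{\bar\O}(b/a)\cdot\max_{\bar\O}(c/d)<1$, solving this $2\times2$ linear system bounds $\|u\|_\infty+\|v\|_\infty$ by a constant $M$, uniformly along the homotopy $(b,c)\rightsquigarrow(\tau b,\tau c)$, $\tau\in[0,1]$. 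A companion comparison argument shows that the component-wise positive solutions of the parabolic counterpart of \eqref{5.16} are globally defined in time with orbits confined to $[0,M]^2$, so their $\o$-limit sets are nonempty and compact. Second, \emph{non-degeneracy}: by Corollary \ref{co5.2}, under the hypotheses of the theorem every coexistence state of \eqref{5.16} is linearly stable; hence its principal eigenvalue is strictly positive, it is a non-degenerate fixed point, and it contributes exactly $+1$ to the total index.

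\emph{Index bookkeeping.} Let $T$ be the compact, order preserving fixed point operator on $K$ whose fixed points are the non-negative steady states of \eqref{5.16}. Deforming $(b,c)$ to $(0,0)$ decouples \eqref{5.16} into two scalar logistic problems, each of total index $1$ over a large ball; by the a priori bound above, uniform in $\tau$, homotopy invariance yields $i_K(T,B_R\cap K)=1$ for $R$ large. The index at $(0,0)$ is $0$ when $(0,0)$ is linearly unstable --- in particular whenever some semitrivial solution exists, since then $\s[d_1\mc{L}_1-\l,\mc{B}_1,\O]<0$ or $\s[d_2\mc{L}_2-\mu,\mc{B}_2,\O]<0$ --- and $1$ otherwise. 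When the semitrivial state $(\t_{\{d_1,\l,a\}},0)$ exists it is a non-degenerate fixed point of $T$ on the face $\{v=0\}$, so the boundary-fixed-point index formula gives index $0$ if $\s[d_2\mc{L}_2-\mu-c\,\t_{\{d_1,\l,a\}},\mc{B}_2,\O]<0$ (i.e.\ it is linearly unstable) and $1$ if this eigenvalue is positive; symmetrically for $(0,\t_{\{d_2,\mu,d\}})$. Summing all local indices and using the non-degeneracy of the coexistence states gives $1=i_K(T,(0,0))+\sum_{\text{semitriv.}}i_K(T,\cdot)+n$, where $n$ is the number of coexistence states. If some semitrivial solution exists and is linearly unstable, then monotonicity of the principal eigenvalue with respect to the potential shows that every \emph{existing} semitrivial solution is linearly unstable and that $(0,0)$ is linearly unstable, so the identity reduces to $1=n$: there is exactly one coexistence state, linearly stable, which proves \ref{th5.3.item1}) as far as uniqueness is concerned. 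In the complementary case (no semitrivial solution, or every existing one linearly stable --- the two semitrivial solutions cannot both exist and be linearly stable, since linear stability of $(\t_{\{d_1,\l,a\}},0)$ implies $\s[d_2\mc{L}_2-\mu,\mc{B}_2,\O]>0$, precluding the existence of $(0,\t_{\{d_2,\mu,d\}})$) the identity becomes $1=1+n$ or $1=0+1+n$, whence $n=0$; the borderline cases where $(0,0)$ or a semitrivial state is only neutrally stable are handled either by a small perturbation of $\mu$ or, more directly, by noting that a coexistence state would force $a u\equiv b v$, hence $d v-c u=u\,d\,(a/b-c/d)\gg 0$ by \eqref{5.17}, contradicting $\s[d_2\mc{L}_2-\mu+dv-cu,\mc{B}_2,\O]=0$. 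This settles \ref{th5.3.item2}).

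\emph{Global attractivity.} Assume we are in case \ref{th5.3.item1}) and let $(U,V)\gg(0,0)$ be the unique coexistence state. Given component-wise positive initial data, the strong maximum principle makes the corresponding solution of the parabolic problem strongly positive for every $t>0$, and its orbit is bounded by the a priori estimate. Since \eqref{5.16} is cooperative, the induced semiflow is monotone; picking $\e>0$ small enough that $\e$ times the pair of principal eigenfunctions of $(d_1\mc{L}_1,\mc{B}_1)$ and $(d_2\mc{L}_2,\mc{B}_2)$ is a strict subsolution, and $M(1,1)$ a supersolution, the solutions issuing from these two comparison pairs converge monotonically, from below and from above, to equilibria $\underline E\le\overline E$ of \eqref{5.16} that sandwich the $\o$-limit set of the given solution. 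Neither $\underline E$ nor $\overline E$ can be $(0,0)$ or a semitrivial solution: each of these is linearly unstable, and the associated negative principal eigenvalue forces the vanishing component to grow, which is incompatible with convergence to it through strongly positive states. Hence $\underline E=\overline E=(U,V)$, so the given solution converges to $(U,V)$, uniformly on $\bar\O$ by parabolic regularity. This completes the proof of \ref{th5.3.item1}).

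\emph{Main obstacle.} The genuinely delicate points are (a) upgrading the $L^\infty$ a priori bounds into an invariant bounded region for the parabolic problem under the general operators $\mc{L}_i$ and the mixed, possibly non-classical, boundary operators $\mc{B}_i$ --- which is what guarantees global existence in time and compactness of orbits --- and (b) the precise value of the fixed point index at the semitrivial solutions in this generality, together with the neutrally stable borderline cases; once these are in place, the monotone-dynamics step is routine.
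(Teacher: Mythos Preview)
Your approach is essentially the paper's: a priori bounds, fixed point index in the positive cone, homotopy to the decoupled problem for the total index, the standard computations of the local indices at $(0,0)$, the semitrivial states and the coexistence states (the latter via Corollary~\ref{co5.2}), and global attractivity by the monotone sub/supersolution sandwich (the paper phrases this as ``compressiveness'' \`a la Hess). Two points, however, are not quite right.

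First, your ``direct'' treatment of the borderline cases is unjustified: nothing forces a coexistence state $(u,v)$ to satisfy $au\equiv bv$. The identity $\s[d_1\mc{L}_1-\l+au-bv,\mc{B}_1,\O]=0$ holds for every coexistence state, and coinciding with $\s[d_1\mc{L}_1-\l,\mc{B}_1,\O]=0$ does not force the potentials to agree, since $au-bv$ may change sign. The perturbation argument you mention (shift $\l$ or $\mu$ slightly and use that the perturbed problem falls into a non-degenerate region where the index count applies) is what actually works, and it is what the paper does for its cases (I) and (II).

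Second, and more importantly, you miss the borderline where a coexistence state \emph{does} exist --- e.g.\ $\s[d_1\mc{L}_1-\l,\mc{B}_1,\O]=0$ with $\s[d_2\mc{L}_2-\mu,\mc{B}_2,\O]<0$ (the paper's case~(III)). Here $(0,0)$ is a degenerate fixed point, so Lemma~\ref{le5.7} does not give you its index and the identity ``$1=n$'' cannot be read off directly. Perturbing into the interior region gives a coexistence state for each perturbed problem, but to pass to the limit you need a compactness argument: take $\l+1/n$, extract a convergent subsequence of the (uniformly bounded, by Lemma~\ref{le5.4}) coexistence states, and rule out that the limit is $(0,0)$ or a semitrivial state by showing that either alternative would force one of the principal eigenvalues $\s[d_i\mc{L}_i-\cdot,\mc{B}_i,\O]$ to vanish in a way contradicting the standing assumptions. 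Uniqueness on that border then follows by a second perturbation back into region~(B). As a smaller remark, the paper obtains the a priori bound not via the maximum at a point (which is delicate under the possibly sign-changing $\b_i$) but by constructing an explicit strict supersolution $(\bar u\Psi,\bar v\Psi)$ with $\Psi$ adapted to the boundary operators; your acknowledged ``main obstacle'' is exactly this, and the supersolution route avoids it.
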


The proof of Theorem \ref{th5.3} follows by applying the fixed point index to a certain integral operator associated to the problem
\begin{equation}
\label{5.18}
\left\{
\begin{array}{ll}
d_1(x){\mathcal{L}}_1 u={\lambda} u - a u^2 + \gamma b u v & \mbox{in}\;\;\Omega,\\
d_2(x){\mathcal{L}}_2 v={\mu} v - d v^2 + \gamma c u v & \mbox{in}\;\;\Omega,\\
\mathcal{B}_1 u=\mathcal{B}_2 v=0 & \mbox{on}\;\,\partial\Omega,
\end{array}
\right.
\end{equation}
where $\g\in [0,1]$ is regarded as an homotopy parameter to uncouple \eqref{5.16}.
\par
Subsequently, we will set
\begin{equation*}
W^{2,\infty}_{\mathcal{B}_i}:=\bigcap_{\theta>N} W^{2,\theta}_{\mathcal{B}_i}(\Omega)\quad\mbox{and}\quad
P_{W^{2,\infty}_{\mathcal{B}_i}}=\{w\in W^{2,\infty}_{\mathcal{B}_i}\,:\,w\ge 0\;\;\mbox{in}\;\;\Omega\},\quad i=1,2.
\end{equation*}
Then, $w\in \mbox{int}\, P_{W^{2,\infty}_{\mathcal{B}_i}}$ if $w\in W^{2,\infty}_{\mathcal{B}_i}$ and  satisfies \eqref{3.3} with $(\G_\mc{R},\G_\mc{D})=(\G_\mc{R}^i,\G_\mc{D}^i)$, $i=1, 2$.

The next result provides us with uniform a priori bounds in $\g\in [0,1]$ for the positive solutions of \eqref{5.18}.

\begin{lemma}
\label{le5.4}
Under condition \eqref{5.17}, there exits a bounded set $\mathcal{U}\times\mathcal{V}\subset W^{2,\infty}_{\mathcal{B}_1}\times W^{2,\infty}_{\mathcal{B}_2}$, independent of $\gamma\in [0,1]$, such that $(u,v)\in\mathrm{int}\;(\mathcal{U}\times \mathcal{V})$ if $(u,v)$ is a solution of \eqref{5.18} with $(u,v)\in P_{W^{2,\infty}_{\mathcal{B}_1}}\times \,P_{W^{2,\infty}_{\mathcal{B}_2}}$ for some
$\g \in[0,1]$.
\end{lemma}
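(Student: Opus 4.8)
The plan is to derive the a priori bounds in two stages, first for $u$ (and symmetrically for $v$) via the maximum principle applied to each equation in isolation, and then to close the loop using the sign structure forced by \eqref{5.17}. Fix $\g\in[0,1]$ and let $(u,v)\in P_{W^{2,\infty}_{\mathcal{B}_1}}\times P_{W^{2,\infty}_{\mathcal{B}_2}}$ solve \eqref{5.18}. The difficulty is that the symbiotic terms $\g b uv$ and $\g c uv$ are \emph{helpful} to growth, so one cannot bound $u$ using only its own equation; the coupling must be exploited. The key observation is the linear combination suggested by \eqref{5.17}: since $\max_{\bar\O}(c/d)<\min_{\bar\O}(a/b)$, one can pick a constant $k>0$ with $c/d < k < a/b$ on $\bar\O$, equivalently $kb < a$ and $c < kd$ pointwise on $\bar\O$. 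First I would multiply the $v$-equation of \eqref{5.18} by $k$ and add it to the $u$-equation, or more precisely work with $w:=u+kv$ (after dividing each equation by its diffusion coefficient $d_i(x)>0$, which is bounded and bounded away from zero on $\bar\O$).

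The second step is to estimate the nonlinear right-hand side of the equation satisfied by $w$. Writing $\widehat{\mathcal L}_i := d_i(x)\mathcal L_i$, we have
\[
\widehat{\mathcal L}_1 u + k\widehat{\mathcal L}_2 v = \l u + k\mu v - a u^2 - k d v^2 + \g(b+kc)uv.
\]
By Young's inequality, $\g(b+kc)uv \le \g(b+kc)uv$; I would bound $\g\le 1$ and then use $kb<a$, $c<kd$ to absorb the cross term: indeed $(b+kc)uv \le \tfrac{a}{?}\dots$; the clean way is to note that $au^2 + kd v^2 - (b+kc)uv$ is a positive definite quadratic form in $(u,v)$ precisely because its discriminant $(b+kc)^2 - 4akd$ is negative — and this is exactly where $kb<a$, $c<kd$ enter, since then $(b+kc)^2 < (b+kc)(a/k + kd)\cdot$ (a short elementary check gives $(b+kc)^2 \le 4akd$ under $b<a/k$ and $c<kd$, with strict inequality on a nonempty open set if $\k\lneq 1$). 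Hence there is $\d>0$ with $a u^2 + kd v^2 - (b+kc)uv \ge \d(u^2+v^2) \ge \tfrac{\d}{\max\{1,k^2\}} w^2$, while $\l u + k\mu v \le C_1 w$ for $C_1 := \max_{\bar\O}\max\{|\l|, |\mu|\}$. Therefore $w$ satisfies a differential inequality of the form $\widehat{\mathcal L}_1 u + k\widehat{\mathcal L}_2 v \le C_1 w - c_2 w^2$ in $\O$ with $c_2>0$ independent of $\g$.

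The third step converts this into an $L^\infty$ bound. Since the operators $\mathcal L_i$ are uniformly elliptic with $C^1$ coefficients and the boundary operators $\mathcal B_i$ are of the standard mixed type for which the maximum principle and elliptic $L^p$ estimates hold (as used throughout Sections \ref{sec-3}--\ref{sec-5}, via \cite{LG13}), I would argue as follows: at an interior maximum point $x_0$ of $w$ — or, if the max is on a Robin part of the boundary, using the boundary point lemma — one gets $\widehat{\mathcal L}_1 u(x_0) + k\widehat{\mathcal L}_2 v(x_0) \le 0$ only after controlling the Dirichlet part, so it is cleaner to test: multiply the $w$-inequality by a suitable power of $w$ and integrate, using $\int_\O w^p\widehat{\mathcal L}_i(\cdot) \ge 0$-type bounds from the $\mathcal B_i$ boundary conditions, to obtain $\int_\O w^{p+1} \le C$ uniformly in $\g$, and then bootstrap via the $W^{2,\t}$ estimates of \cite{LG13} applied to each equation separately (now that the right-hand sides $\l u - au^2 + \g buv$ and $\mu v - dv^2 + \g cuv$ are bounded in $L^\t$ for every $\t$, because $u,v$ are bounded in $L^\infty$). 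This yields a bound $\|u\|_{W^{2,\infty}_{\mathcal B_1}} + \|v\|_{W^{2,\infty}_{\mathcal B_2}} \le M$ with $M$ independent of $\g$. Finally, set $\mathcal U := \{u\in W^{2,\infty}_{\mathcal B_1} : \|u\|_{W^{2,\t_0}} < M+1\}$ for one fixed $\t_0>N$ and analogously $\mathcal V$; these are bounded open sets and any solution lies in their interior. The main obstacle is the second step — pinning down the constant $k$ and verifying that the quadratic form $a u^2 + kdv^2 - (b+kc)uv$ is uniformly positive definite on $\bar\O$ purely from \eqref{5.17}; once that elementary inequality is in hand, the rest is the standard a priori bound machinery already invoked in this paper.
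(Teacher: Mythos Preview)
Your approach has a genuine gap at the second step: the claim that the quadratic form $au^2+kdv^2-(b+kc)uv$ is positive definite on $\bar\O$ whenever $kb<a$ and $c<kd$ is false. Take constant coefficients $a=d=1$, $b=0.5$, $c=1.9$; then \eqref{5.17} reads $1.9<2$, so it holds, and any $k\in(1.9,2)$ satisfies $kb<a$ and $c<kd$. With $k=1.95$ one gets $(b+kc)^2=(0.5+3.705)^2\approx 17.68$ while $4akd=7.8$, so the discriminant is positive and the form takes negative values on a cone of positive $(u,v)$. Hence the inequality $\widehat{\mathcal L}_1 u+k\widehat{\mathcal L}_2 v\le C_1 w-c_2 w^2$ cannot be obtained this way. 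There is a second, structural difficulty: even granted such an inequality, $\mathcal L_1$ and $\mathcal L_2$ are different operators (different $A_i$, $C_i$, and different boundary decompositions $\Gamma^i_{\mathcal D}\cup\Gamma^i_{\mathcal R}$), so $d_1\mathcal L_1 u+k d_2\mathcal L_2 v$ is not an elliptic operator applied to $w=u+kv$, and neither a pointwise maximum-principle argument at $\max w$ nor testing against $w^p$ yields the sign information you need.

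The paper takes a different route that sidesteps both problems: it exploits that \eqref{5.18} is a \emph{cooperative} system and builds a supersolution of the form $(\bar u\Psi,\bar v\Psi)$, where $\Psi>0$ is a fixed smooth function with $\mathcal B_i\Psi>0$ on $\partial\O$ (constructed from the distance to the boundary) and $\bar u,\bar v>0$ are constants. The supersolution conditions reduce, after dividing by $a\bar u\Psi^2$ and $d\bar v\Psi^2$, to the pair of linear inequalities
\[
\bar u\ \ge\ \max_{\bar\O}\Big(\frac{b}{a}\Big)\,\bar v+C_1,\qquad
\bar v\ \ge\ \max_{\bar\O}\Big(\frac{c}{d}\Big)\,\bar u+C_2,
\]
and \eqref{5.17} is precisely the condition $\max_{\bar\O}(b/a)\cdot\max_{\bar\O}(c/d)<1$ that makes this system solvable in the first quadrant. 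Then the maximum principle for cooperative systems (as in \cite{LGMM}, \cite{ALLG}) gives $u\le\bar u\Psi$, $v\le\bar v\Psi$ uniformly in $\g\in[0,1]$, and elliptic regularity does the rest. The moral is that \eqref{5.17} is tailored to produce a \emph{pair} of bounds via the cooperative comparison principle, not a single scalar bound on $u+kv$.
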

\begin{proof}
Consider $\Psi$ defined by
\begin{equation*}
\Psi(x):=\exp\left(- M\, \mathrm{dist}_{\partial\Omega}(x) \right)\quad \hbox{with}\;\; M>\max_{\partial\Omega}\left\{0,\frac{-\beta_1}{\langle{\rm n},A_1 \mathrm{n}\rangle}, \frac{-\beta_2}{\langle{\rm n},A_2 {\rm n}\rangle} \right\},
\end{equation*}
on a sufficiently narrow neighborhood of $\partial \Omega$. According to \cite[Sec. 2 \& 3]{FRLGANS} this function is of class $\mathcal{C}^2$, and can be extended to the entire  $\bar\Omega$ with smoothness and positiveness by mean of cut-off functions. Furthermore, $\Psi$ satisfies $\mathcal{B}_i\Psi>0$ in $\Gamma_{\mathcal{D}}^i$ and
\begin{equation*}
\mathcal{B}_i\Psi=\langle {\rm n},A_i \nabla \Psi\rangle + \beta_i \Psi=
-\Psi M \langle {\rm n},A_i \nabla \mathrm{dist}_{\partial\Omega}(\cdot)\rangle + \beta_i \Psi=\Psi \left( M\langle {\rm n},A_i {\rm n}\rangle + \beta_i \right)> 0\quad \hbox{on}\;\;\Gamma_{\mathcal{R}}^i.
\end{equation*}
Now, we will show that there exist two positive constants, $\bar{u}, \bar{v}>0$, independent of $\g \in [0,1]$, such that $(\bar{u}\Psi,\bar{v}\Psi)$ is a positive supersolution of \eqref{5.18}. Once established this, the result holds as a direct consequence of the maximum principle for cooperative systems (see, e.g., \cite[Th. 2.1]{LGMM}, \cite[Th. 6.3]{LGS}, \cite[Th. 5.4]{FRLGNA}, or \cite[Th.1.3]{ALLG}). Indeed, for $(\bar{u}\Psi,\bar{v}\Psi)$ to be a supersolution of \eqref{5.18}, $\bar{u}$ and $\bar{v}$ should satisfy
\begin{equation}
\label{5.19}
\begin{split}
d_1{\mathcal{L}}_1 (\bar{u}\Psi) & \ge\lambda \bar{u}\Psi - a (\bar{u}\Psi)^2 + \gamma b \bar{u}\bar{v}\Psi^2 \quad \hbox{in}\;\;\Omega, \\
d_2{\mathcal{L}}_2 (\bar{v}\Psi) & \ge\mu \bar{v}\Psi - d (\bar{v}\Psi)^2 + \gamma c \bar{u}\bar{v}\Psi^2 \quad \mbox{in}\;\;\Omega,
\end{split}
\end{equation}
for every $\gamma\in[0,1]$. Note that, by the choice of $\Psi$,
\[
  \mathcal{B}_1(\bar{u}\Psi)=\bar u \mc{B}_1\Psi >0\quad \hbox{and} \quad
  \mathcal{B}_2(\bar{v}\Psi)=\bar{v} \mc{B}_2\Psi >0\quad \hbox{on}\;\; \partial \Omega.
\]
Since $\bar{u}$ and $\bar{v}$ are assumed to be positive constants, dividing by $a\bar{u}\Psi^2$
the first equation of \eqref{5.19} and by $d\bar v \Psi^2$ the second one, it becomes apparent that
\eqref{5.19} holds for every $\g\in [0,1]$ as soon as
\begin{equation*}
\bar{u} \ge \frac{b}{a}\bar{v} + \frac{\lambda - d_1 \frac{{\mathcal{L}}_1 \Psi}{\Psi}}{a\Psi} \quad \hbox{and}\quad \bar{v} \ge \frac{c}{d} \bar{u} + \frac{\mu  - d_2 \frac{{\mathcal{L}}_2 \Psi}{\Psi}}{d\Psi} \quad \mbox{in}\;\;\Omega.
\end{equation*}
Naturally, these inequalities are satisfied provided
\begin{equation}
\label{5.20}
\bar{u} \ge \max_{\bar{\Omega}}\left(\frac{b}{a}\right)\bar{v} + \max_{\bar\Omega}\left(\frac{\lambda - d_1 \frac{{\mathcal{L}}_1 \Psi}{\Psi}}{a\Psi}\right) \quad \hbox{and}\quad \bar{v} \ge \max_{\bar\Omega}\left(\frac{c}{d}\right) \bar{u} + \max_{\bar\Omega}\left(\frac{\mu  - d_2 \frac{{\mathcal{L}}_2 \Psi}{\Psi}}{d\Psi}\right).
\end{equation}
Thanks to \eqref{5.17}, we have that
\begin{equation*}
\frac{1}{\max_{\bar{\Omega}}\Big(\frac{a}{b}\Big)}=\min_{\bar{\Omega}}\Big(\frac{a}{b}\Big)
>\max_{\bar{\Omega}}\Big(\frac{c}{d}\Big)>0.
\end{equation*}
Thus, \eqref{5.20} determines a nonempty cone, $Q$, containing points within the first quadrant, and any of these points is a valid choice for $(\bar{u},\bar{v})$. This ends the proof.
\end{proof}

\begin{remark}
\label{re5.5}\rm If $(u,v)$ is a coexistence state of \eqref{5.18}, then
\[
0\le \theta_{\{d_1,\lambda,a\}}\le u\quad \hbox{and}\quad0\le \theta_{\{d_2,\mu,d\}}\le v\quad\hbox{in}\;\;\Omega,
\]
since $(\theta_{\{d_1,\lambda,a\}},\theta_{\{d_2,\mu,d\}})$ is a component-wise non-negative subsolution of \eqref{5.18}. Moreover, as a consequence of the proof of Lemma \ref{le5.4}, \eqref{5.18} admits supersolutions as large as desired. Hence, by the theory of super and subsolutions for cooperative systems
(see, e.g., \cite{MM1}, \cite{MM2} and \cite{AmPM}), \eqref{5.18} admits at least a coexistence state if both semitrivial states exist. Finally, note that \eqref{5.17} guarantees that the positive solutions of the parabolic counterpart of \eqref{5.18} are globally defined in time, as they cannot blow-up in a finite time (see \cite{DLGS}).
\end{remark}

Now, take $m\ge 0$ large enough so that
\begin{equation}
\label{5.21}
\sigma[d_1\mathcal{L}_1+m,\mathcal{B}_1,\Omega]>1,\quad
\sigma[d_2\mathcal{L}_2+m,\mathcal{B}_2,\Omega]>1,
\end{equation}
and
\begin{equation*}
\lambda-a u+\gamma b v+m>0\quad \hbox{and}\quad\mu-d v +\gamma c u+m>0\quad\hbox{in}\;\;\bar\Omega\;\;\hbox{for all}\;\;\gamma\in[0,1],\;\;u\in\mathcal{U},\;\;v\in\mathcal{V},
\end{equation*}
where $\mc{U}$ and $\mc{V}$ are those given by Lemma \ref{le5.4}. Note that, in particular,
\begin{equation}
\label{5.22}
  \l + m >0 \quad\hbox{and}\quad \mu + m>0 \quad\hbox{in}\;\; \bar \O,
\end{equation}
because $(0,0)\in \mc{U}\times \mc{V}$, by construction.
\par
Subsequently, we consider the family of operators
\[
   \mathcal{H}:[0,1]\times\mathcal{U}\times\mathcal{V}\to W^{2,\infty}_{\mathcal{B}_1}\times W^{2,\infty}_{\mathcal{B}_2}
\]
defined through
\begin{equation*}
\mathcal{H}(\gamma,u,v):=\left(
\begin{array}{ll}
(d_1{\mathcal{L}}_1+m)^{-1}[({\lambda}-au+\gamma bv+m)u]\\
(d_2{\mathcal{L}}_2+m)^{-1}[({\mu}-dv+\gamma cu+m)v]
\end{array}
\right).
\end{equation*}
By our assumptions on $m$, for every $\gamma\in[0,1]$, $\mathcal{H}(\gamma,\cdot,\cdot)$ is a compact order preserving operator. Moreover, the fixed points of $\mathcal{H}(\gamma,\cdot,\cdot)$ are the solutions of \eqref{5.18} in $P_{W^{2,\infty}_{\mathcal{B}_1}}\times P_{W^{2,\infty}_{\mathcal{B}_2}}$. Next, we will calculate all the fixed point indices of these solutions as fixed points of $\mc{H}$.

\begin{lemma}
\label{le5.6}
The total fixed point index of $\mc{H}(1,\cdot,\cdot)$ in \, $\mathrm{int}\,(\mathcal{U}\times\mathcal{V})$
\, is given by
\begin{equation*}
i_{P_{W^{2,\infty}_{\mathcal{B}_1}}\times P_{W^{2,\infty}_{\mathcal{B}_2}}}(\mathcal{H}(1,\cdot,\cdot),\mathrm{int}\,(\mathcal{U}\times\mathcal{V}))=1.
\end{equation*}
\end{lemma}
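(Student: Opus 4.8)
The plan is to evaluate this total index by two successive homotopies: first deform the coupling parameter $\g$ from $1$ down to $0$, which uncouples \eqref{5.18} into two scalar logistic problems; then, in the decoupled operator, deform the growth rates $\l$ and $\mu$ down to constants lying strictly below the principal eigenvalues $\s[d_1\mc{L}_1,\mc{B}_1,\O]$ and $\s[d_2\mc{L}_2,\mc{B}_2,\O]$, after which the trivial state $(0,0)$ is the only fixed point and its index is immediately $1$. Throughout, the index is the fixed point index of compact maps in the positive cones $P_{W^{2,\infty}_{\mc{B}_1}}$ and $P_{W^{2,\infty}_{\mc{B}_2}}$, and the admissibility of every homotopy will rest on the a priori bounds of Lemma \ref{le5.4}, made uniform in the homotopy parameters.

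\emph{Step 1.} Since, for each $\g\in[0,1]$, the fixed points of $\mc{H}(\g,\cdot,\cdot)$ in $P_{W^{2,\infty}_{\mc{B}_1}}\times P_{W^{2,\infty}_{\mc{B}_2}}$ are exactly the componentwise non-negative solutions of \eqref{5.18}, Lemma \ref{le5.4} shows that all of them lie in $\mathrm{int}\,(\mc{U}\times\mc{V})$ uniformly in $\g$; hence $\g\mapsto\mc{H}(\g,\cdot,\cdot)$ is an admissible compact homotopy and the homotopy invariance of the index gives
\[
 i_{P_{W^{2,\infty}_{\mc{B}_1}}\times P_{W^{2,\infty}_{\mc{B}_2}}}\big(\mc{H}(1,\cdot,\cdot),\mathrm{int}\,(\mc{U}\times\mc{V})\big)=i_{P_{W^{2,\infty}_{\mc{B}_1}}\times P_{W^{2,\infty}_{\mc{B}_2}}}\big(\mc{H}(0,\cdot,\cdot),\mathrm{int}\,(\mc{U}\times\mc{V})\big).
\]

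\emph{Step 2.} At $\g=0$ the operator splits as $\mc{H}(0,u,v)=(\mc{H}_1u,\mc{H}_2v)$ with $\mc{H}_1u=(d_1\mc{L}_1+m)^{-1}[(\l-au+m)u]$ and $\mc{H}_2v=(d_2\mc{L}_2+m)^{-1}[(\mu-dv+m)v]$. As $\mc{U}\times\mc{V}$ is a product of bounded relatively open sets and, by Lemma \ref{le5.4}, $\mc{H}_1,\mc{H}_2$ are fixed-point free on $\p\mc{U},\p\mc{V}$, the product property of the index yields
\[
 i\big(\mc{H}(0,\cdot,\cdot),\mathrm{int}\,(\mc{U}\times\mc{V})\big)=i_{P_{W^{2,\infty}_{\mc{B}_1}}}(\mc{H}_1,\mathrm{int}\,\mc{U})\cdot i_{P_{W^{2,\infty}_{\mc{B}_2}}}(\mc{H}_2,\mathrm{int}\,\mc{V}).
\]
For the first factor I would fix a constant $\l_0$ with $\l_0\le\l$ in $\bar\O$ and $\l_0<\s[d_1\mc{L}_1,\mc{B}_1,\O]$, enlarge $m$ if necessary (harmless, \eqref{5.21} and the positivity requirements being lower bounds on $m$) so that $\l_0-au+m>0$ on $\mc{U}$, and homotope via $\mc{H}_1^\tau u:=(d_1\mc{L}_1+m)^{-1}[(\l_\tau-au+m)u]$ with $\l_\tau:=(1-\tau)\l+\tau\l_0$, $\tau\in[0,1]$. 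Since $\l_\tau$ decreases in $\tau$, the monotonicity of the logistic solution in its growth rate keeps every positive fixed point along the homotopy below $\theta_{\{d_1,\l,a\}}$, hence, together with $0$, inside $\mathrm{int}\,\mc{U}$; so the homotopy is admissible and $i(\mc{H}_1,\mathrm{int}\,\mc{U})=i(\mc{H}_1^1,\mathrm{int}\,\mc{U})$. Because $\l_0<\s[d_1\mc{L}_1,\mc{B}_1,\O]$, the only non-negative fixed point of $\mc{H}_1^1$ is $0$, and $(\mc{H}_1^1)'(0)=(\l_0+m)(d_1\mc{L}_1+m)^{-1}$ has spectral radius $(\l_0+m)/(\s[d_1\mc{L}_1,\mc{B}_1,\O]+m)<1$; hence, by the standard index formula at a fixed point at the vertex of the cone with contractive derivative, $i(\mc{H}_1^1,\mathrm{int}\,\mc{U})=i_{P_{W^{2,\infty}_{\mc{B}_1}}}(\mc{H}_1^1,0)=1$. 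The symmetric argument gives $i_{P_{W^{2,\infty}_{\mc{B}_2}}}(\mc{H}_2,\mathrm{int}\,\mc{V})=1$.

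Combining the three displays, $i(\mc{H}(1,\cdot,\cdot),\mathrm{int}\,(\mc{U}\times\mc{V}))=1\cdot1=1$, as claimed. The homotopy-invariance and product steps are routine given Lemma \ref{le5.4}; the points deserving care are (i) that the a priori bounds persist, with a fixed absorbing set, throughout both homotopies, so that no fixed point ever reaches $\p(\mc{U}\times\mc{V})$ — which is where the uniformity in the homotopy parameters is essential and may force taking $\mc{U}\times\mc{V}$ order-convex, as the construction in the proof of Lemma \ref{le5.4} permits — and (ii) the evaluation of the index at the vertex fixed point $0$ via linearization. A more computational alternative would be to list all (at most four) non-negative fixed points of the decoupled operator and add their local indices, but the homotopy route above is cleaner; I expect the bookkeeping in (i) to be the only genuine obstacle.
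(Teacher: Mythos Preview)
Your proof is correct and follows the same two-step strategy as the paper: homotope $\g$ from $1$ to $0$ to decouple, apply the product formula, and then reduce each scalar factor by a second homotopy to a map whose only fixed point in the cone is $0$ with linearization of spectral radius strictly less than $1$, concluding via \cite[Th.\ 13.1]{Am76}. The only difference is in the choice of the second homotopy: you lower the growth rate to a subcritical constant $\l_0$ while keeping the quadratic term $-au^2$, whereas the paper deforms $\mc{H}_j$ to the purely linear operator $(d_j\mc{L}_j+m)^{-1}\big[(m+\s[d_j\mc{L}_j,\mc{B}_j,\O]-1)\,\cdot\,\big]$ by simultaneously shrinking the quadratic coefficient; your variant has the minor advantage that admissibility along the homotopy is immediate from the monotonicity of the logistic solution in its growth rate, exactly the point you flag in~(i).
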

\begin{proof} By the homotopy invariance property of the fixed point index (see \cite[Th. 11.1(iii)]{Am76}),
\begin{equation*}
i_{P_{W^{2,\infty}_{\mathcal{B}_1}}\times P_{W^{2,\infty}_{\mathcal{B}_2}}}(\mathcal{H}(1,\cdot,\cdot),\hbox{int}\,(\mathcal{U}\times\mathcal{V}))=
i_{P_{W^{2,\infty}_{\mathcal{B}_1}}\times P_{W^{2,\infty}_{\mathcal{B}_2}}}(\mathcal{H}(0,\cdot,\cdot),\hbox{int}\,(\mathcal{U}\times\mathcal{V})).
\end{equation*}
Thus, owing to the product formula, we find that
\begin{equation*}
i_{P_{W^{2,\infty}_{\mathcal{B}_1}}\times P_{W^{2,\infty}_{\mathcal{B}_2}}}(\mathcal{H}(1,\cdot,\cdot),\hbox{int}\,(\mathcal{U}\times\mathcal{V}))
=i_{P_{W^{2,\infty}_{\mathcal{B}_1}}}(\mathcal{H}_1(\cdot),\hbox{int}\,\mathcal{U})\cdot i_{P_{W^{2,\infty}_{\mathcal{B}_1}}}(\mathcal{H}_2(\cdot),\hbox{int}\,\mathcal{V}),
\end{equation*}
where $\mathcal{H}_1:\mathcal{U}\to W^{2,\infty}_{\mathcal{B}_1}$ and $\mathcal{H}_2:\mathcal{V}\to W^{2,\infty}_{\mathcal{B}_2}$ are the operators defined by
\begin{equation*}
\mathcal{H}_1(u):=(d_1{\mathcal{L}}_1+m)^{-1}[({\lambda}-au+m)u]
\quad \hbox{and}\quad
\mathcal{H}_2(v):=(d_2{\mathcal{L}}_2+m)^{-1}[({\mu} -d v+m)v].
\end{equation*}
Now, consider the homotopies
\begin{align*}
\mathcal{G}_1(\gamma,u) & :=(d_1{\mathcal{L}}_1+m)^{-1}\left[ \left(m+\sigma[d_1\mathcal{L}_1,\mathcal{B}_1,\Omega]-1+\gamma ({\lambda}-\sigma[d_1\mathcal{L}_1,\mathcal{B}_1,\Omega]+1-au)\right)u\right], \\
\mathcal{G}_2(\gamma,v) & :=(d_2{\mathcal{L}}_2+m)^{-1}\left[\left(m+\sigma[d_2\mathcal{L}_2,\mathcal{B}_2,\Omega]-1+\gamma({\mu}-
\sigma[d_2\mathcal{L}_2,\mathcal{B}_2,\Omega]+1-dv)\right)v\right].
\end{align*}
By the homotopy invariance property,
\begin{align*}
i_{P_{W^{2,\infty}_{\mathcal{B}_1}}}(\mathcal{H}_1(\cdot),\hbox{int}\,\mathcal{U}) & =i_{P_{W^{2,\infty}_{\mathcal{B}_1}}}(\mathcal{G}_1(1,\cdot),\hbox{int}\,\mathcal{U})=
i_{P_{W^{2,\infty}_{\mathcal{B}_1}}}(\mathcal{G}_1(0,\cdot),\hbox{int}\,\mathcal{U}),\\
i_{P_{W^{2,\infty}_{\mathcal{B}_2}}}(\mathcal{H}_2(\cdot),\hbox{int}\,\mathcal{V}) & =i_{P_{W^{2,\infty}_{\mathcal{B}_2}}}(\mathcal{G}_2(1,\cdot),\hbox{int}\,\mathcal{V})=
i_{P_{W^{2,\infty}_{\mathcal{B}_2}}}(\mathcal{G}_2(0,\cdot),\hbox{int}\,\mathcal{V}).
\end{align*}
On the other hand, the spectral radio of $\mathcal{G}_j(0,\cdot)$, $j=1,2$, is
\begin{equation*}
\varrho(\mathcal{G}_j(0,\cdot))=\frac{m+\sigma[d_j\mathcal{L}_j,\mathcal{B}_j,\Omega]-1}
{m+\sigma[d_j\mathcal{L}_j,\mathcal{B}_j,\Omega]}<1.
\end{equation*}
Thus, thanks to \cite[Th. 13.1]{Am76}, we can infer that
\begin{equation*}
i_{P_{W^{2,\infty}_{\mathcal{B}_1}}}(\mathcal{G}_1(0,\cdot),\hbox{int}\,\mathcal{U})=
i_{P_{W^{2,\infty}_{\mathcal{B}_2}}}(\mathcal{G}_2(0,\cdot),\hbox{int}\,\mathcal{V})=1.
\end{equation*}
This ends the proof.
\end{proof}

The following result provides us with the fixed point index of $(0,0)$ when it is non-degenerate.

\begin{lemma}
\label{le5.7}
Suppose
\begin{equation*}
   \sigma[d_1\mathcal{L}_1-\lambda,\mathcal{B}_1,\Omega]\neq 0,\qquad
   \sigma[d_2\mathcal{L}_2-\mu,\mathcal{B}_2,\Omega]\neq 0.
\end{equation*}
Then,
\begin{equation*}
i_{P_{W^{2,\infty}_{\mathcal{B}_1}}\times P_{W^{2,\infty}_{\mathcal{B}_2}}}(\mathcal{H}(1,\cdot,\cdot),(0,0))=
\left\{
\begin{array}{ll}
1 & \hbox{if}\;\; \sigma[d_1{\mathcal{L}}_1-{\lambda},\mathcal{B}_1,\Omega]>0\;\; \hbox{and}\;\; \sigma[d_2{\mathcal{L}}_2-{\mu},\mathcal{B}_2,\Omega]> 0,\\[0.5ex]
0 & \hbox{if}\;\; \sigma[d_1{\mathcal{L}}_1-{\lambda},\mathcal{B}_1,\Omega]<0,\;\; \hbox{or}\;\; \sigma[d_2{\mathcal{L}}_2-{\mu},\mathcal{B}_2,\Omega]< 0.
\end{array}
\right.
\end{equation*}
\end{lemma}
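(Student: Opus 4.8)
The plan is to compute the index directly from the linearization of $\mathcal{H}(1,\cdot,\cdot)$ at $(0,0)$. Writing $\lambda_1:=\lambda$, $\lambda_2:=\mu$ and $P:=P_{W^{2,\infty}_{\mathcal{B}_1}}\times P_{W^{2,\infty}_{\mathcal{B}_2}}$, note first that $\mathcal{H}(\gamma,0,0)=(0,0)$ for every $\gamma\in[0,1]$, so the origin is always a fixed point, and that in $\mathcal{H}(\gamma,u,v)$ the coupling terms $\gamma b uv$ and $\gamma c uv$ are quadratic in $(u,v)$. Hence the Fr\'echet derivative of $\mathcal{H}(1,\cdot,\cdot)$ at $(0,0)$ is the block-diagonal compact operator
\begin{equation*}
L:=D_{(u,v)}\mathcal{H}(1,0,0)=\mathrm{diag}(L_1,L_2),\qquad L_j w:=(d_j\mathcal{L}_j+m)^{-1}[(\lambda_j+m)w].
\end{equation*}
By \eqref{5.21} the resolvent $(d_j\mathcal{L}_j+m)^{-1}$ is compact and strongly positive, and by \eqref{5.22} multiplication by $\lambda_j+m>0$ preserves the positive cone; therefore each $L_j$ is compact and strongly positive on $W^{2,\infty}_{\mathcal{B}_j}$, so by the Krein--Rutman theorem its spectral radius $\varrho(L_j)>0$ is an algebraically simple, strictly dominant eigenvalue with eigenfunction $\phi_j\in\mathrm{int}\,P_{W^{2,\infty}_{\mathcal{B}_j}}$. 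Since $L$ is block-diagonal, $\varrho(L)=\max\{\varrho(L_1),\varrho(L_2)\}$ and $(\phi_1,0)$, $(0,\phi_2)$ are positive eigenvectors of $L$.

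The crucial intermediate step is the sign relation
\begin{equation*}
\mathrm{sgn}\,\bigl(1-\varrho(L_j)\bigr)=\mathrm{sgn}\,\sigma[d_j\mathcal{L}_j-\lambda_j,\mathcal{B}_j,\Omega],\qquad j=1,2.
\end{equation*}
I would establish it by a super/subsolution comparison: let $\sigma_j:=\sigma[d_j\mathcal{L}_j-\lambda_j,\mathcal{B}_j,\Omega]$ with principal eigenfunction $\phi\gg 0$, so that $(d_j\mathcal{L}_j+m)\phi=(\lambda_j+m+\sigma_j)\phi$. If $\sigma_j>0$, applying the order-preserving resolvent gives $\phi-L_j\phi=(d_j\mathcal{L}_j+m)^{-1}[\sigma_j\phi]\gg 0$, i.e. $L_j\phi\ll\phi$; testing this against the positive principal eigenfunction of the adjoint $L_j^{*}$, or equivalently invoking the monotonicity of the principal eigenvalue with respect to the potential, forces $\varrho(L_j)<1$. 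The inequalities reverse when $\sigma_j<0$, giving $\varrho(L_j)>1$, while $\sigma_j=0$ yields $L_j\phi=\phi$ and hence $\varrho(L_j)=1$. In particular the hypotheses $\sigma[d_j\mathcal{L}_j-\lambda,\mathcal{B}_j,\Omega]\neq 0$, $j=1,2$, ensure that $1$ is not an eigenvalue of $L$, so $(0,0)$ is a non-degenerate --- hence isolated --- fixed point of $\mathcal{H}(1,\cdot,\cdot)$ and its fixed point index is well defined.

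To conclude I would invoke the linearization principle for the fixed point index in ordered Banach spaces of H. Amann (\cite[Th. 13.1]{Am76}), already used in the proof of Lemma \ref{le5.6}. If $\sigma[d_1\mathcal{L}_1-\lambda,\mathcal{B}_1,\Omega]>0$ and $\sigma[d_2\mathcal{L}_2-\mu,\mathcal{B}_2,\Omega]>0$, then $\varrho(L)<1$, whence $i_P(\mathcal{H}(1,\cdot,\cdot),(0,0))=i_P(L,0)=1$. If instead $\sigma[d_1\mathcal{L}_1-\lambda,\mathcal{B}_1,\Omega]<0$, then $L$ admits the positive eigenvector $(\phi_1,0)$ with eigenvalue $\varrho(L_1)>1$, so Amann's theorem yields $i_P(\mathcal{H}(1,\cdot,\cdot),(0,0))=0$, and the case $\sigma[d_2\mathcal{L}_2-\mu,\mathcal{B}_2,\Omega]<0$ is symmetric, using $(0,\phi_2)$. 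Since these cases exhaust the hypotheses, the lemma follows. The main obstacle is precisely the sign relation above: because $\lambda_j$ need not be constant, the eigenvalue problems for $L_j$ and for $d_j\mathcal{L}_j-\lambda_j$ do not match up by a direct substitution, so one genuinely needs the comparison argument (equivalently, the monotonicity of the principal eigenvalue with respect to the potential). An alternative to the linearization principle, closer in spirit to the proof of Lemma \ref{le5.6}, is to homotope $\mathcal{H}(1,\cdot,\cdot)$ to the decoupled product $\mathcal{H}(0,\cdot,\cdot)=\mathcal{H}_1\times\mathcal{H}_2$ --- admissible because $(0,0)$ stays non-degenerate along the whole homotopy, its linearization being independent of $\gamma$ --- and then apply the product formula for the fixed point index together with the scalar index computations carried out there.
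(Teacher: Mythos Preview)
Your proposal is correct and follows essentially the same approach as the paper: compute the block-diagonal linearization $L=\mathrm{diag}(L_1,L_2)$, relate the position of $\varrho(L_j)$ relative to $1$ to the sign of $\sigma[d_j\mathcal{L}_j-\lambda_j,\mathcal{B}_j,\Omega]$, and conclude via \cite[Th.~13.1]{Am76}. The only cosmetic difference is that the paper establishes the sign relation by studying the monotone auxiliary map $r\mapsto\sigma[d_j\mathcal{L}_j+m-(m+\lambda_j)/r,\mathcal{B}_j,\Omega]$ and its values at $r\to 0,1,+\infty$ rather than by your direct supersolution comparison $L_j\phi\ll\phi$, but both arguments rest on the same monotonicity of the principal eigenvalue with respect to the potential.
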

\begin{proof}
Differentiating $\mathcal{H}(1,\cdot,\cdot)$ with respect to $(u,v)$ and particularizing at $(0,0)$ yields
\begin{equation*}
D_{(u,v)}\mathcal{H}(1,0,0)(u,v)=
\left(
\begin{array}{l}
(d_1{\mathcal{L}}_1+m)^{-1}[({\lambda}+m)u]\\ (d_2{\mathcal{L}}_2+m)^{-1}[({\mu}+m)v]
\end{array}
\right).
\end{equation*}
Suppose that
\begin{equation}
\label{5.23}
\sigma[d_1{\mathcal{L}}_1-{\lambda},\mathcal{B}_1,\Omega]>0\quad \hbox{and}\quad \sigma[d_2{\mathcal{L}}_2-{\mu},\mathcal{B}_2,\Omega]> 0.
\end{equation}
Let $r_0\in\mathbb{R}$ be an eigenvalue of $D_{(u,v)}\mathcal{H}(1,0,0)$ to a component-wise non-negative eigenvector, $(\varphi,\psi)\neq (0,0)$.
Without loss of generality, we can assume that $\varphi>0$. Then,
\begin{equation*}
\sigma\Big[d_1{\mathcal{L}}_1+m-\frac{m+\lambda}{r_0},\mathcal{B}_1,\Omega\Big]=0.
\end{equation*}
Moreover, thanks to \eqref{5.22}, by the strict monotonicity of the principal eigenvalue with respect to the potential (see, e.g.,  \cite[Th. 4.1]{FRLGANS}), it becomes apparent that
\[
  r\mapsto \sigma\left[d_1{\mathcal{L}}_1+m-\frac{m+\lambda}{r},\mathcal{B}_1,\Omega\right]
\]
is strictly increasing and, in addition,  is continuous in $(0,+\infty)$. Taking into account \eqref{5.21},
\begin{equation}
\label{5.24}
\lim_{r\to \xi} \sigma\Big[d_1{\mathcal{L}}_1+m-\frac{m+{\lambda}}{r},\mathcal{B}_1,\Omega\Big]=
\left\{
\begin{array}{ll}
\sigma[d_1{\mathcal{L}}_1+m,\mathcal{B}_1,\Omega]>1 & \hbox{if}\;\; \xi= +\infty,\\
\sigma[d_1{\mathcal{L}}_1-{\lambda},\mathcal{B}_1,\Omega] & \hbox{if}\;\; \xi= 1,\\
-\infty & \hbox{if}\;\; \xi= 0.
\end{array}
\right.
\end{equation}
By \eqref{5.24}, $r_0<1$. Thus,  $D_{(u,v)}\mathcal{H}(1,0,0)$ cannot admit a positive eigenvector to an eigenvalue greater or equal than one. Therefore, owing to \cite[Th. 13.1]{Am76}, we find that
\begin{equation*}
i_{P_{W^{2,\infty}_{\mathcal{B}_1}}\times P_{W^{2,\infty}_{\mathcal{B}_2}}}(\mathcal{H}(1,\cdot,\cdot),(0,0))=1.
\end{equation*}
Now, assume that some of the principal eigenvalues in \eqref{5.23} is negative, instead of positive.
For example,
\[
  \sigma[d_1{\mathcal{L}}_1-{\lambda},\mathcal{B}_1,\Omega]<0.
\]
Then, by \eqref{5.24}, there is a unique $r_0>1$ such that
\[
  \sigma\left[d_1{\mathcal{L}}_1+m-\frac{m+{\lambda}}{r_0},\mathcal{B}_1,\Omega\right]=0.
\]
Let $\varphi>0$ be any principal eigenfunction associated to this eigenvalue. Then, $(\varphi, 0)$ provides us with a positive eigenvector of $D_{(u,v)}\mathcal{H}(1,0,0)$ to an eigenvalue greater than one. Therefore, thanks to \cite[Th. 13.1]{Am76},
\begin{equation*}
i_{P_{W^{2,\infty}_{\mathcal{B}_1}}\times P_{W^{2,\infty}_{\mathcal{B}_2}}}(\mathcal{H}(1,\cdot,\cdot),(0,0))=0.
\end{equation*}
From these ingredients the proof can be easily completed in all possible cases.
\end{proof}

To calculate the indices of the semitrivial solutions we will make an intensive use of \cite[Le. 4.1]{LGDIE}, which goes back to \cite[Le. 2 \& Le. 4]{Dancer83}. In our setting, it can be stated as follows. An analogous version holds true for $(0,\theta_{\{d_2,\mu,d\}})$. Subsequently, we will denote by
\[
  P_2:W^{2,\infty}_{\mathcal{B}_1}\times W^{2,\infty}_{\mathcal{B}_2}\to \{0\}\times W^{2,\infty}_{\mathcal{B}_2}
\]
the projection on the second component, i.e., $P_2(u,v):=(0,v)$.

\begin{lemma}
\label{le5.8}
Assume that $\sigma[d_1\mathcal{L}_1-\lambda,\mathcal{B}_1,\Omega]<0$. So, $\theta_{\{d_1,\lambda,a\}}\neq 0$. Then, the following assertions hold:
\begin{enumerate}[{\rm a)}]
\item\label{le5.8.item1} If the operator $I-D_{(u,v)}\mathcal{H}(1,\theta_{\{d_1,\lambda,a\}},0)$ is injective  in $W^{2,\infty}_{\mathcal{B}_1}\times W^{2,\infty}_{\mathcal{B}_2}$ and the spectral radius of $P_2 D_{(u,v)}\mathcal{H}(1,\theta_{\{d_1,\lambda,a\}},0)\Big|_{\{0\}\times  W^{2,\infty}_{\mathcal{B}_2}}$ is greater than one, then
\begin{equation*}
i_{P_{W^{2,\infty}_{\mathcal{B}_1}}\times P_{W^{2,\infty}_{\mathcal{B}_2}}}(\mathcal{H}(1,\cdot,\cdot),(\theta_{\{d_1,\lambda,a\}},0))=0.
\end{equation*}
\item\label{le5.8.item2} If the operator $I-D_{(u,v)}\mathcal{H}(1,\theta_{\{d_1,\lambda,a\}},0)$ is injective in $W^{2,\infty}_{\mathcal{B}_1}\times W^{2,\infty}_{\mathcal{B}_2}$ and the spectral radius of $P_2 D_{(u,v)}\mathcal{H}(1,\theta_{\{d_1,\lambda,a\}},0)\Big|_{\{0\}\times  W^{2,\infty}_{\mathcal{B}_2}}$ is less than one, then
\begin{equation*}
i_{P_{W^{2,\infty}_{\mathcal{B}_1}}\times P_{W^{2,\infty}_{\mathcal{B}_2}}}(\mathcal{H}(1,\cdot,\cdot),(\theta_{\{d_1,\lambda,a\}},0))=(-1)^\chi,
\end{equation*}
where $\chi$ stands for the sum of the algebraic multiplicities of the eigenvalues of $D_{(u,v)}\mathcal{H}(1,\theta_{\{d_1,\lambda,a\}},0)$ greater than one.

\item\label{le5.8.item3} If $I-D_{(u,v)}\mathcal{H}(1,\theta_{\{d_1,\lambda,a\}},0)$ is injective in $W^{2,\infty}_{\mathcal{B}_1}\times P_{W^{2,\infty}_{\mathcal{B}_2}}$,  instead of in $W^{2,\infty}_{\mathcal{B}_1}\times W^{2,\infty}_{\mathcal{B}_2}$, and there exists $w\in W^{2,\infty}_{\mathcal{B}_1}\times P_{W^{2,\infty}_{\mathcal{B}_2}}$ such that the equation
\[
   (I-D_{(u,v)}\mathcal{H}(1,\theta_{\{d_1,\lambda,a\}},0))y=w
\]
has no solution $y\in  W^{2,\infty}_{\mathcal{B}_1}\times P_{W^{2,\infty}_{\mathcal{B}_2}}$, then
\begin{equation*}
i_{P_{W^{2,\infty}_{\mathcal{B}_1}}\times P_{W^{2,\infty}_{\mathcal{B}_2}}}(\mathcal{H}(1,\cdot,\cdot),(\theta_{\{d_1,\lambda,a\}},0))=0.
\end{equation*}
\end{enumerate}
\end{lemma}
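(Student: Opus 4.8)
This lemma is an application of the Dancer-type index formula \cite[Le. 4.1]{LGDIE} (see also \cite[Le. 2 \& Le. 4]{Dancer83}), once two preliminary reductions are carried out. Since $\sigma[d_1\mathcal{L}_1-\lambda,\mathcal{B}_1,\Omega]<0$, the scalar logistic problem $d_1\mathcal{L}_1 w=\lambda w-aw^2$, $\mathcal{B}_1 w=0$, admits a unique positive solution $\theta_{\{d_1,\lambda,a\}}$, which is non-degenerate and satisfies \eqref{3.3} with $(\Gamma_{\mathcal{R}},\Gamma_{\mathcal{D}})=(\Gamma_{\mathcal{R}}^1,\Gamma_{\mathcal{D}}^1)$; in particular $\theta_{\{d_1,\lambda,a\}}\in\mathrm{int}\,P_{W^{2,\infty}_{\mathcal{B}_1}}$. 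Writing $w_0:=(\theta_{\{d_1,\lambda,a\}},0)$, the plan is to first replace the index of $w_0$ as a fixed point of $\mathcal{H}(1,\cdot,\cdot)$ in the cone $P:=P_{W^{2,\infty}_{\mathcal{B}_1}}\times P_{W^{2,\infty}_{\mathcal{B}_2}}$ by its index in the wedge $W:=W^{2,\infty}_{\mathcal{B}_1}\times P_{W^{2,\infty}_{\mathcal{B}_2}}$, and then invoke the cited lemma verbatim. For the replacement, note that $\theta_{\{d_1,\lambda,a\}}\gg 0$ provides an open neighbourhood $N$ of $w_0$ in $W^{2,\infty}_{\mathcal{B}_1}\times W^{2,\infty}_{\mathcal{B}_2}$, which can be taken inside $\mathcal{U}\times\mathcal{V}$, such that $N\cap P=N\cap W$ and $u\gg 0$ for all $(u,v)\in N$. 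On $N$ the positivity of $(d_2\mathcal{L}_2+m)^{-1}$, together with the sign normalization of $m$ which makes $\mu-dv+cu+m>0$ on $\bar\Omega$ in the relevant range (cf. \eqref{5.22}), shows that the second component of $\mathcal{H}(1,u,v)$ is non-negative whenever $v\ge 0$, so $\mathcal{H}(1,\cdot,\cdot)$ leaves $N\cap W$ inside $W$; by excision and the local character of the fixed point index,
\[
  i_{P_{W^{2,\infty}_{\mathcal{B}_1}}\times P_{W^{2,\infty}_{\mathcal{B}_2}}}(\mathcal{H}(1,\cdot,\cdot),w_0)
  =i_{W}(\mathcal{H}(1,\cdot,\cdot),w_0).
\]

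Next I would compute the derivative at $w_0$. Setting $\theta:=\theta_{\{d_1,\lambda,a\}}$, a direct differentiation of $\mathcal{H}(1,\cdot,\cdot)$ gives the block-triangular form
\begin{equation*}
D_{(u,v)}\mathcal{H}(1,w_0)(\varphi,\psi)=\Bigl((d_1\mathcal{L}_1+m)^{-1}\bigl[(\lambda-2a\theta+m)\varphi+b\theta\psi\bigr],\ (d_2\mathcal{L}_2+m)^{-1}\bigl[(\mu+c\theta+m)\psi\bigr]\Bigr),
\end{equation*}
so that $P_2 D_{(u,v)}\mathcal{H}(1,w_0)\big|_{\{0\}\times W^{2,\infty}_{\mathcal{B}_2}}$ is exactly the compact, strongly positive operator $\psi\mapsto(d_2\mathcal{L}_2+m)^{-1}[(\mu+c\theta+m)\psi]$, well defined because $\mu+c\theta+m>0$ on $\bar\Omega$ by \eqref{5.22} and $c,\theta\ge 0$. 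With the index now taken in the wedge $W$, the three conclusions a), b), c) are precisely the three alternatives of \cite[Le. 4.1]{LGDIE}: if $I-D_{(u,v)}\mathcal{H}(1,w_0)$ is injective on $W^{2,\infty}_{\mathcal{B}_1}\times W^{2,\infty}_{\mathcal{B}_2}$ and the above strongly positive operator has spectral radius larger than $1$, the index vanishes; if it is injective there and the spectral radius is smaller than $1$, the index equals $(-1)^\chi$ with $\chi$ the sum of the algebraic multiplicities of the eigenvalues of $D_{(u,v)}\mathcal{H}(1,w_0)$ exceeding $1$; and in the degenerate variant, where $I-D_{(u,v)}\mathcal{H}(1,w_0)$ is only injective along $W$ and the linearized equation $(I-D_{(u,v)}\mathcal{H}(1,w_0))y=w$ has no solution within $W$, the index is again $0$. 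The corresponding assertions for $(0,\theta_{\{d_2,\mu,d\}})$ follow by exchanging the two components and the associated data, with the projection on the first component replacing $P_2$.

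The only genuinely non-formal point, and the step I would treat with care, is the reduction from the cone $P$ to the wedge $W$: one must check both that no fixed point of $\mathcal{H}(1,\cdot,\cdot)$ in $W\setminus P$ enters a small neighbourhood of $w_0$ (which holds because that neighbourhood can be chosen inside $P$) and that $\mathcal{H}(1,\cdot,\cdot)$ locally maps $W$ into itself, for which the strong positivity $\theta_{\{d_1,\lambda,a\}}\gg 0$ and the sign choice of $m$ are essential. Once this is settled, the remainder is the term-by-term matching of the hypotheses of \cite[Le. 4.1]{LGDIE} with those of the present lemma, the block-triangular structure of $D_{(u,v)}\mathcal{H}(1,w_0)$ making the identification of $P_2 D_{(u,v)}\mathcal{H}(1,w_0)\big|_{\{0\}\times W^{2,\infty}_{\mathcal{B}_2}}$ immediate.
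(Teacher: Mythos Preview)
Your proposal is correct and matches the paper's approach: the paper does not give a proof of this lemma at all but simply states it as the specialization to the present setting of \cite[Le.~4.1]{LGDIE} (tracing back to \cite[Le.~2 \& Le.~4]{Dancer83}), exactly the references you invoke. Your write-up actually supplies more detail than the paper --- the explicit passage from the cone $P$ to the wedge $W$, the block-triangular computation of $D_{(u,v)}\mathcal{H}(1,w_0)$, and the identification of $P_2 D_{(u,v)}\mathcal{H}(1,w_0)\big|_{\{0\}\times W^{2,\infty}_{\mathcal{B}_2}}$ --- all of which the paper leaves implicit and only unpacks later, inside the proof of Lemma~\ref{le5.9}.
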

As a direct consequence of Lemma \ref{le5.8}, the next result establishes that the fixed point index of each semitrivial solution is determined by its linear stability as  a steady-state solution of the parabolic counterpart of \eqref{5.16}, as soon as it is a non-degenerate steady state.

\begin{lemma}
\label{le5.9} The following identities are satisfied:
\begin{equation}
\label{5.25}
i_{P_{W^{2,\infty}_{\mathcal{B}_1}}\times P_{W^{2,\infty}_{\mathcal{B}_2}}}(\mathcal{H}(1,\cdot,\cdot),(\theta_{\{d_1,\lambda,a\}},0))=
\left\{
\begin{array}{ll}
0 & \hbox{if}\;\;\sigma[d_2\mathcal{L}_2-\mu-c\theta_{\{d_1,\lambda,a\}},\mathcal{B}_2,\Omega]<0,\\
1 & \hbox{if}\;\;\sigma[d_2\mathcal{L}_2-\mu-c\theta_{\{d_1,\lambda,a\}},\mathcal{B}_2,\Omega]>0,
\end{array}
\right.
\end{equation}
and
\begin{equation}
\label{5.26}
i_{P_{W^{2,\infty}_{\mathcal{B}_1}}\times P_{W^{2,\infty}_{\mathcal{B}_2}}}(\mathcal{H}(1,\cdot,\cdot),(0,\theta_{\{d_2,\mu,d\}}))=
\left\{
\begin{array}{ll}
0 & \hbox{if}\;\;\sigma[d_1\mathcal{L}_1-\lambda-b\theta_{\{d_2,\mu,d\}},\mathcal{B}_1,\Omega]<0,\\
1 & \hbox{if}\;\;\sigma[d_1\mathcal{L}_1-\lambda-b\theta_{\{d_2,\mu,d\}},\mathcal{B}_1,\Omega]>0.
\end{array}
\right.
\end{equation}
\end{lemma}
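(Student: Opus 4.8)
The plan is to invoke Lemma~\ref{le5.8} (and its stated analogue for $(0,\theta_{\{d_2,\mu,d\}})$), so the first step is to compute the Fr\'echet derivative of $\mathcal{H}(1,\cdot,\cdot)$ at the semitrivial fixed point. Writing $\theta:=\theta_{\{d_1,\lambda,a\}}$ (the unique positive solution of $d_1\mathcal{L}_1u=\lambda u-au^2$, $\mathcal{B}_1u=0$, which exists precisely when $\sigma[d_1\mathcal{L}_1-\lambda,\mathcal{B}_1,\Omega]<0$), differentiating the two components of $\mathcal{H}$ and evaluating at $(\theta,0)$ yields
\begin{equation*}
D_{(u,v)}\mathcal{H}(1,\theta,0)\begin{pmatrix}\phi\\ \psi\end{pmatrix}=\begin{pmatrix}(d_1\mathcal{L}_1+m)^{-1}\big[(\lambda-2a\theta+m)\phi+b\theta\,\psi\big]\\[1ex](d_2\mathcal{L}_2+m)^{-1}\big[(\mu+c\theta+m)\psi\big]\end{pmatrix},
\end{equation*}
which is block upper triangular; hence the transversal operator entering Lemma~\ref{le5.8} is $P_2D_{(u,v)}\mathcal{H}(1,\theta,0)\big|_{\{0\}\times W^{2,\infty}_{\mathcal{B}_2}}=(d_2\mathcal{L}_2+m)^{-1}(\mu+c\theta+m)$. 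By \eqref{5.22} and $c\theta\ge0$ its weight is positive, so it is strongly positive and compact, and Krein--Rutman together with the strict monotonicity of the principal eigenvalue with respect to the potential (as in the proof of Lemma~\ref{le5.7}, cf.\ \cite[Th.~4.1]{FRLGANS}) and \eqref{5.21} show that its spectral radius is $>1$ if and only if $\sigma[d_2\mathcal{L}_2-\mu-c\theta,\mathcal{B}_2,\Omega]<0$, and is $<1$ if and only if that eigenvalue is positive.

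Next I would verify the non-degeneracy hypotheses. Testing $d_1\mathcal{L}_1\theta=\lambda\theta-a\theta^2$ against $\theta$ gives $\sigma[d_1\mathcal{L}_1-\lambda+a\theta,\mathcal{B}_1,\Omega]=0$, whence $\sigma[d_1\mathcal{L}_1-\lambda+2a\theta,\mathcal{B}_1,\Omega]>0$ by strict monotonicity (as $a\theta>0$ in $\Omega$). Strengthening the original choice of $m$ so that in addition $\lambda-2au+m>0$ and $\mu-2dv+m>0$ on $\bar\Omega$ for all $u\in\mathcal{U}$, $v\in\mathcal{V}$ --- which changes neither the preceding lemmas nor the fixed points of $\mathcal{H}$ --- the $(1,1)$-block $(d_1\mathcal{L}_1+m)^{-1}(\lambda-2a\theta+m)$ becomes strongly positive with spectral radius $<1$, hence with no eigenvalue $\ge1$. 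Consequently the $(1,1)$-block of $I-D_{(u,v)}\mathcal{H}(1,\theta,0)$ is invertible, and $I-(d_2\mathcal{L}_2+m)^{-1}(\mu+c\theta+m)$ is injective on the positive cone of $W^{2,\infty}_{\mathcal{B}_2}$ because a strongly positive operator has a positive eigenvector only at its spectral radius, which here differs from $1$; this supplies the injectivity needed by parts (b) and (c) of Lemma~\ref{le5.8}.

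Finally I assemble the index. If $\sigma[d_2\mathcal{L}_2-\mu-c\theta,\mathcal{B}_2,\Omega]<0$, the transversal operator has spectral radius $\varrho>1$; taking $w:=(0,\psi^*)$ with $\psi^*\gg0$ its principal eigenfunction, the equation $(I-D_{(u,v)}\mathcal{H}(1,\theta,0))y=w$ has no solution $y=(y_1,y_2)$ in $W^{2,\infty}_{\mathcal{B}_1}\times P_{W^{2,\infty}_{\mathcal{B}_2}}$, since iterating the second component gives $y_2\ge\sum_{k=0}^{n-1}\varrho^{k}\psi^*$ for every $n$, which is impossible; hence Lemma~\ref{le5.8}(c) yields index $0$. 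If instead $\sigma[d_2\mathcal{L}_2-\mu-c\theta,\mathcal{B}_2,\Omega]>0$, then both diagonal blocks of $D_{(u,v)}\mathcal{H}(1,\theta,0)$ are strongly positive with spectral radius $<1$, so the derivative has no real eigenvalue $>1$, i.e.\ $\chi=0$ in Lemma~\ref{le5.8}(b), and the index equals $(-1)^0=1$; this proves \eqref{5.25}. The identity \eqref{5.26} follows by the same argument with the two species interchanged: the derivative of $\mathcal{H}(1,\cdot,\cdot)$ at $(0,\theta_{\{d_2,\mu,d\}})$ is block \emph{lower} triangular with transversal part $(d_1\mathcal{L}_1+m)^{-1}(\lambda+b\theta_{\{d_2,\mu,d\}}+m)$, and the analogue of Lemma~\ref{le5.8} relative to the projection $P_1(u,v):=(u,0)$ closes the proof.

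The main obstacle --- the only step that is not bookkeeping --- is the evaluation of $\chi$, i.e.\ the verification that the ``parallel'' block $(d_1\mathcal{L}_1+m)^{-1}(\lambda-2a\theta+m)$ contributes no eigenvalue larger than one. This is exactly the linearized stability of the logistic state $\theta$ inside the scalar equation, and it is the reason one must keep the freedom of enlarging $m$ so that the relevant weight is positive and the Krein--Rutman characterization of the spectral radius applies cleanly.
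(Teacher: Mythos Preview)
Your proof is correct and follows the same overall strategy as the paper: compute the block--triangular Fr\'echet derivative, analyse the transversal block via the Krein--Rutman characterisation of its spectral radius, and invoke the three alternatives of Lemma~\ref{le5.8}. The tactical differences are minor. First, the paper does \emph{not} enlarge $m$; instead, to rule out eigenvalues $\tau>1$ in the $(1,1)$--block it inserts the intermediate potential $\lambda-a\theta+m$ (which is already positive by the original choice of $m$) and uses two monotonicity steps to reach $\sigma[d_1\mathcal{L}_1-\lambda+a\theta,\mathcal{B}_1,\Omega]=0$. Your device of strengthening $m$ so that $\lambda-2a\theta+m>0$ and then reading $\chi=0$ directly off the block spectra is a clean equivalent. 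Second, for the index~$0$ case the paper shows non--surjectivity on the wedge by choosing $w_2=(d_2\mathcal{L}_2+m)^{-1}\omega$ with $\omega\gneq0$ and appealing to the supersolution characterisation of the principal eigenvalue (\cite[Th.~7.10]{LG13}), whereas you take $w=(0,\psi^*)$ and iterate the positive resolvent identity to force $y_2\ge\sum_{k=0}^{n-1}\varrho^{k}\psi^*\to\infty$. Both arguments are valid; yours is slightly more self--contained, the paper's ties the obstruction directly to the sign of $\sigma[d_2\mathcal{L}_2-\mu-c\theta,\mathcal{B}_2,\Omega]$.
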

\begin{proof} We will only prove \eqref{5.25}, as \eqref{5.26} follows by symmetry.
Assume that $\sigma[d_1{\mathcal{L}}_1-{\lambda},\mathcal{B}_1,\Omega]<0$. Then,  $\theta_{\{d_1,\lambda,a\}}\gg 0$ and differentiating $\mathcal{H}(1,\cdot,\cdot)$ yields
\begin{equation*}
D_{(u,v)}\mathcal{H}(1,\theta_{\{d_1,\lambda,a\}},0)(u,v)=
\left(
\begin{array}{l}
(d_1{\mathcal{L}}_1+m)^{-1}\left[({\lambda}-2a \theta_{\{d_1,\lambda,a\}} +m)u+b \theta_{\{d_1,\lambda,a\}} v\right]\\
(d_2{\mathcal{L}}_2+m)^{-1}[({\mu}+c \theta_{\{d_1,\lambda,a\}} +m)v]
\end{array} \right).
\end{equation*}
Thanks to \eqref{5.21} and \eqref{5.22}, $D_{(u,v)}\mathcal{H}(1,\theta_{\{d_1,\lambda,a\}},0)$
maps $W^{2,\infty}_{\mathcal{B}_1}\times P_{W^{2,\infty}_{\mathcal{B}_2}}$ into itself.
Assume that
\begin{equation}
\label{5.27}
\sigma[d_2{\mathcal{L}}_2-{\mu}-c\theta_{\{d_1,{\lambda},a\}},\mathcal{B}_2,\Omega]>0.
\end{equation}
Then, $I-D_{(u,v)}\mathcal{H}(1,\theta_{\{d_1,\lambda,a\}},0)$ is injective on $W^{2,\infty}_{\mathcal{B}_1}\times W^{2,\infty}_{\mathcal{B}_2}$. Indeed, if there exists $(u,v)\in W^{2,\infty}_{\mathcal{B}_1}\times W^{2,\infty}_{\mathcal{B}_2}$ such that
\[
  D_{(u,v)}\mathcal{H}(1,\theta_{\{d_1,\lambda,a\}},0)(u,v)=(u,v),
\]
then
\begin{equation}
\label{5.28}
\left(d_1{\mathcal{L}}_1-{\lambda}+2a \theta_{\{d_1,\lambda,a\}}\right) u=b \theta_{\{d_1,\lambda,a\}} v
\end{equation}
and
\begin{equation}
\label{5.29}
\left(d_2{\mathcal{L}}_2-{\mu}-c \theta_{\{d_1,\lambda,a\}}\right) v=0.
\end{equation}
If $v\neq 0$, then $0$ is an eigenvalue of $d_2{\mathcal{L}}_2-{\mu}-c \theta_{\{d_1,\lambda,a\}}$. Thus,
\[
 \sigma[d_2{\mathcal{L}}_2-{\mu}-c \theta_{\{d_1,\lambda,a\}},\mathcal{B}_2,\Omega]\leq 0,
\]
which contradicts  \eqref{5.27}. Hence, $v=0$ and \eqref{5.28} becomes
\[
\left(d_1{\mathcal{L}}_1-{\lambda}+2a \theta_{\{d_1,\lambda,a\}}\right) u=0.
\]
If $u\neq 0$, then $0$ is an eigenvalue of $d_1{\mathcal{L}}_1-{\lambda}+2a \theta_{\{d_1,\lambda,a\}}$. Thus,
\begin{equation}
\label{5.30}
\sigma[d_1{\mathcal{L}}_1-{\lambda}+2a \theta_{\{d_1,\lambda,a\}},\mathcal{B}_1,\Omega]\leq 0.
\end{equation}
Consequently, by the monotonicity of the principal eigenvalue with respect to the potential,
\begin{equation}
\label{5.31}
\sigma[d_1{\mathcal{L}}_1-{\lambda}+a \theta_{\{d_1,\lambda,a\}},\mathcal{B}_1,\Omega]<\sigma[d_1{\mathcal{L}}_1-{\lambda}+2a \theta_{\{d_1,\lambda,a\}},\mathcal{B}_1,\Omega]\leq 0.
\end{equation}
This contradicts the fact that
$$
   (d_1{\mathcal{L}}_1-{\lambda}+a \theta_{\{d_1,\lambda,a\}})\theta_{\{d_1,\lambda,a\}}=0
$$
because this entails that
\begin{equation}
\label{5.32}
\sigma[d_1{\mathcal{L}}_1-{\lambda}+a \theta_{\{d_1,\lambda,a\}},\mathcal{B}_1,\Omega]=0.
\end{equation}
Therefore, $(u,v)=(0,0)$ and, hence, $I-D_{(u,v)}\mathcal{H}(1,\theta_{\{d_1,\lambda,a\}},0)$ is injective.
For applying Lemma \ref{le5.8}, it remains to estimate the spectral radio of the operator
\[
P_2 D_{(u,v)}\mathcal{H}(1,\theta_{\{d_1,\lambda,a\}},0)v:=(d_2{\mathcal{L}}_2+m)^{-1}[({\mu}+c \theta_{\{d_1,\lambda,a\}} +m)v], \quad v\in W^{2,\infty}_{\mc{B}_2}.
\]
A direct calculation shows that the spectral radius, $r_0$, of this operator satisfies
\[
\sigma\Big[d_2{\mathcal{L}}_2+m-\frac{{\mu}+c \theta_{\{d_1,\lambda,a\}} +m}{r_0},\mathcal{B}_2,\Omega\Big]=0.
\]
Arguing as in the proof of Lemma \ref{le5.7}, it is apparent that the map
$$
  r\mapsto \sigma\left[d_2{\mathcal{L}}_2+m-\frac{{\mu}+c \theta_{\{d_1,\lambda,a\}} +m}{r},\mathcal{B}_2,\Omega\right]
$$
is continuous, strictly increasing and, owing to \eqref{5.27}, satisfies
\begin{equation*}
\lim_{r\to \xi} \sigma\Big[d_2{\mathcal{L}}_2+m-\frac{{\mu}+c \theta_{\{d_1,\lambda,a\}} +m}{r},\mathcal{B}_2,\Omega\Big]=
\left\{
\begin{array}{ll}
\sigma[d_2{\mathcal{L}}_2+m,\mathcal{B}_2,\Omega]>1 & \hbox{if}\;\; \xi= +\infty,\\
\sigma[d_2{\mathcal{L}}_2-{\mu}-c \theta_{\{d_1,\lambda,a\}},\mathcal{B}_2,\Omega]>0 & \hbox{if}\;\; \xi= 1,\\ -\infty & \hbox{if}\;\; \xi= 0.
\end{array}
\right.
\end{equation*}
Thus, $r_0<1$. Therefore, due to Lemma \ref{le5.8} \ref{le5.8.item2}),
\begin{equation*}
i_{P_{W^{2,\infty}_{\mathcal{B}_1}}\times P_{W^{2,\infty}_{\mathcal{B}_2}}}(\mathcal{H}(1,\cdot,\cdot),(\theta_{\{d_1,\lambda,a\}},0))=(-1)^\chi,
\end{equation*}
where $\chi$ is the sum of the multiplicities of the eigenvalues of $D_{(u,v)}\mathcal{H}(1,\theta_{\{d_1,\lambda,a\}},0)$ greater than one. Now, assume that $\tau>1$ is an eigenvalue of $D_{(u,v)}\mathcal{H}(1,\theta_{\{d_1,\lambda,a\}},0)$ with associated eigenvector $(u,v)\neq (0,0)$. If $v\ne 0$, then, it is easily seen that
\begin{equation}
\label{5.33}
\sigma\Big[d_2{\mathcal{L}}_2+m-\frac{{\mu}+c \theta_{\{d_1,\lambda,a\}} +m}{\tau},\mathcal{B}_2,\Omega\Big]\leq 0,
\end{equation}
by the dominance of the principal eigenvalue. However, by the strict monotonicity of the principal eigenvalue with respect to the potential, it follows from \eqref{5.27} that
\[
\sigma\Big[d_2{\mathcal{L}}_2+m-\frac{{\mu}+c \theta_{\{d_1,\lambda,a\}} +m}{\tau},\mathcal{B}_2,\Omega\Big]>\sigma\Big[d_2{\mathcal{L}}_2-{\mu}-c \theta_{\{d_1,\lambda,a\}},\mathcal{B}_2,\Omega\Big] >0,
\]
which contradicts \eqref{5.33}. Consequently, $v=0$. Hence, $u\neq 0$. Thus, it follows from
the $u$-equation that
\[
\sigma\Big[d_1{\mathcal{L}}_1+m-\frac{{\lambda}-2a \theta_{\{d_1,\lambda,a\}} +m}{\tau},\mathcal{B}_1,\Omega\Big]\leq 0.
\]
Moreover, by the strict monotonicity of the principal eigenvalue and \eqref{5.32},
\begin{align*}
\sigma\Big[d_1{\mathcal{L}}_1+m-\frac{{\lambda}-2a \theta_{\{d_1,\lambda,a\}} +m}{\tau},\mathcal{B}_1,\Omega\Big]&>\sigma\Big[d_1{\mathcal{L}}_1+m-\frac{{\lambda}-a \theta_{\{d_1,\lambda,a\}} +m}{\tau},\mathcal{B}_1,\Omega\Big] \\ & >\sigma\left[d_1{\mathcal{L}}_1-{\lambda}+a \theta_{\{d_1,\lambda,a\}},\mathcal{B}_1,\Omega\right] =0,
\end{align*}
which again leads to a contradiction. Therefore, $\chi=0$ and
\begin{equation*}
i_{P_{W^{2,\infty}_{\mathcal{B}_1}}\times P_{W^{2,\infty}_{\mathcal{B}_2}}}(\mathcal{H}(1,\cdot,\cdot),(\theta_{\{d_1,\lambda,a\}},0))=1.
\end{equation*}
This ends the proof of the second identity of \eqref{5.25}.
\par
Next, suppose that
\begin{equation}
\label{5.34}
\sigma[d_2{\mathcal{L}}_2-{\mu}-c\theta_{\{d_1,{\lambda},a\}},\mathcal{B}_2,\Omega]<0,
\end{equation}
instead of \eqref{5.27}. We claim that $I-D_{(u,v)}\mathcal{H}(1,\theta_{\{d_1,\lambda,a\}},0)$ is an injective operator on $W^{2,\infty}_{\mathcal{B}_1}\times P_{W^{2,\infty}_{\mathcal{B}_2}}$. Indeed, if there is a $(u,v)\in W^{2,\infty}_{\mathcal{B}_1}\times P_{W^{2,\infty}_{\mathcal{B}_2}}$ such that
$$
  D_{(u,v)}\mathcal{H}(1,\theta_{\{d_1,\lambda,a\}},0)(u,v)=(u,v),
$$
then the identities \eqref{5.28} and \eqref{5.29} hold. Since $v\in P_{W^{2,\infty}_{\mathcal{B}_2}}$, if $v\neq 0$, then
\[
\sigma[d_2{\mathcal{L}}_2-{\mu}-c\theta_{\{d_1,{\lambda},a\}},\mathcal{B}_2,\Omega]=0,
\]
which contradicts \eqref{5.34}. Thus, $v=0$. Similarly, arguing as in the previous case,
by \eqref{5.30} and \eqref{5.31}, one can easily infer that $u=0$. Hence, $I-D_{(u,v)}\mathcal{H}(1,\theta_{\{d_1,\lambda,a\}},0)$ is injective on $W^{2,\infty}_{\mathcal{B}_1}\times P_{W^{2,\infty}_{\mathcal{B}_2}}$. According to Lemma \ref{le5.8} \ref{le5.8.item3}), to complete the proof of \eqref{5.25}, it suffices to show that  $I-D_{(u,v)}\mathcal{H}(1,\theta_{\{d_1,\lambda,a\}},0)$ is not
surjective on $W^{2,\infty}_{\mathcal{B}_1}\times P_{W^{2,\infty}_{\mathcal{B}_2}}$. To prove this, we
proceed by contradiction. Assume that, for every $(w_1,w_2)\in W^{2,\infty}_{\mathcal{B}_1}\times P_{W^{2,\infty}_{\mathcal{B}_2}}$ there exists $(u,v)\in W^{2,\infty}_{\mathcal{B}_1}\times P_{W^{2,\infty}_{\mathcal{B}_2}}$ such that
\[
[I-D_{(u,v)}\mathcal{H}(1,\theta_{\{d_1,\lambda,a\}},0)](u,v)=(w_1,w_2),
\]
i.e.,
\begin{equation*}
u-(d_1{\mathcal{L}}_1+m)^{-1}\left[({\lambda}-2a \theta_{\{d_1,\lambda,a\}} +m)u+b \theta_{\{d_1,\lambda,a\}} v\right]=w_1
\end{equation*}
and
\begin{equation}
\label{5.35}
v-(d_2{\mathcal{L}}_2+m)^{-1}[({\mu}+c \theta_{\{d_1,\lambda,a\}} +m)v]=w_2.
\end{equation}
In particular, since we are assuming that $\sigma[d_2{\mathcal{L}}_2+m,\mathcal{B}_2,\Omega]>0$, for every $w\in P_{W^{2,\infty}_{\mc{B}_2}}\setminus\{0\}$, the function $w_2$ defined by
\[
w_2:=(d_2{\mathcal{L}}_2+m)^{-1}\omega
\]
is strongly positive. For this choice, \eqref{5.35} becomes
\[
(d_2{\mathcal{L}}_2-{\mu}-c \theta_{\{d_1,\lambda,a\}})v=\omega> 0.
\]
This implies that $v>0$ and hence, thanks to Theorem 7.10 of \cite{LG13},
\[
  \sigma[d_2{\mathcal{L}}_2-{\mu}-c \theta_{\{d_1,\lambda,a\}},\mathcal{B}_2,\Omega]>0,
\]
which contradicts \eqref{5.34}. This ends the proof.
\end{proof}

It remains to obtain the fixed point index of the coexistence  states.

\begin{lemma}
\label{le5.10}
Under the hypothesis of Theorem \ref{th5.3},
\begin{equation*}
i_{P_{W^{2,\infty}_{\mathcal{B}_1}}\times P_{W^{2,\infty}_{\mathcal{B}_2}}}(\mathcal{H}(1,\cdot,\cdot),(u,v))=1
\end{equation*}
for every coexistence state, $(u,v)$, of \eqref{5.16}.
\end{lemma}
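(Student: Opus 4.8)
The plan is to identify the fixed point index of a coexistence state $(u,v)$ with the Leray--Schauder index of $\mathcal{H}(1,\cdot,\cdot)$ at $(u,v)$, to compute the Fr\'echet derivative $T:=D_{(u,v)}\mathcal{H}(1,u,v)$, and to show that its spectral radius satisfies $r(T)<1$; by the index formula of H. Amann \cite[Th. 13.1]{Am76}, already used in Lemmas \ref{le5.6}--\ref{le5.9}, this forces the index to equal $(-1)^0=1$.

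First I would invoke the linear stability of $(u,v)$. Since \eqref{5.17} yields $\max_{\bar\Omega}\kappa<1$, the remaining hypothesis of Theorem \ref{th5.3} on $\tfrac{ad^2}{c^3}\tfrac{d_2}{d_1}F_\pm(\kappa)$ is exactly the one required by Corollary \ref{co5.2}; hence every coexistence state $(u,v)$ of \eqref{5.16} is linearly stable, i.e. the principal eigenvalue $\sigma_0$ of the linearized eigenvalue problem
\begin{equation*}
\left\{
\begin{array}{ll}
[d_1\mathcal{L}_1-\lambda+2au-bv]\varphi-bu\psi=\sigma\varphi & \hbox{in}\;\;\Omega,\\
[d_2\mathcal{L}_2-\mu+2dv-cu]\psi-cv\varphi=\sigma\psi & \hbox{in}\;\;\Omega,\\
\mathcal{B}_1\varphi=\mathcal{B}_2\psi=0 & \hbox{on}\;\;\partial\Omega,
\end{array}
\right.
\end{equation*}
is positive, $\sigma_0>0$; here the existence, simplicity and strict dominance of $\sigma_0$, together with a principal eigenfunction $(\varphi_0,\psi_0)$ with $\varphi_0\gg0$ and $\psi_0\gg0$, follow from \cite[Th. 1.3]{ALLG}, because $bu\gg0$ and $cv\gg0$ in $\Omega$. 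Moreover, being a coexistence state, $u\gg0$ and $v\gg0$, so $(u,v)\in\mathrm{int}\,(P_{W^{2,\infty}_{\mathcal{B}_1}}\times P_{W^{2,\infty}_{\mathcal{B}_2}})$ and the cone index at $(u,v)$ coincides with its Leray--Schauder index.

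Next I would compute, after replacing $m$ by a larger constant if necessary (which alters neither the fixed points of $\mathcal{H}(1,\cdot,\cdot)$ nor the value of the index, and can be arranged so that $\lambda-2au+bv+m>0$ and $\mu-2dv+cu+m>0$ in $\bar\Omega$),
\begin{equation*}
T(\varphi,\psi)=\left(
\begin{array}{l}
(d_1\mathcal{L}_1+m)^{-1}[(\lambda-2au+bv+m)\varphi+bu\psi]\\
(d_2\mathcal{L}_2+m)^{-1}[(\mu-2dv+cu+m)\psi+cv\varphi]
\end{array}
\right).
\end{equation*}
Writing $\mathcal{A}$ for the (matrix, cooperative) operator appearing on the left of the linearized problem at $\sigma=0$, and $\mathcal{D}+m$ for $\mathrm{diag}(d_1\mathcal{L}_1+m,\,d_2\mathcal{L}_2+m)$, one checks at once that $T=(\mathcal{D}+m)^{-1}[(\mathcal{D}+m)-\mathcal{A}]$; by \eqref{5.21} and the strict positivity of $bu$ and $cv$, $T$ is a compact operator which is strongly positive on $P_{W^{2,\infty}_{\mathcal{B}_1}}\times P_{W^{2,\infty}_{\mathcal{B}_2}}$. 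Applying $T$ to $\Phi_0:=(\varphi_0,\psi_0)$ and using $\mathcal{A}\Phi_0=\sigma_0\Phi_0$ gives the key identity
\begin{equation*}
T\Phi_0=\Phi_0-\sigma_0\,(\mathcal{D}+m)^{-1}\Phi_0\ll\Phi_0,
\end{equation*}
the strict inequality holding because $\sigma_0>0$ and $(\mathcal{D}+m)^{-1}\Phi_0\gg0$. A standard Krein--Rutman comparison — pairing with a strictly positive eigenfunction of the adjoint $T^*$ associated with $r(T)$, or iterating the inequality $T\Phi_0\le(1-\delta)\Phi_0$ valid for some $\delta>0$ — then yields $r(T)<1$. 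Consequently $T$ has no eigenvalue in $[1,+\infty)$; in particular $1\notin\mathrm{spec}\,T$, so $(u,v)$ is a non-degenerate fixed point, and \cite[Th. 13.1]{Am76} gives
\begin{equation*}
i_{P_{W^{2,\infty}_{\mathcal{B}_1}}\times P_{W^{2,\infty}_{\mathcal{B}_2}}}(\mathcal{H}(1,\cdot,\cdot),(u,v))=(-1)^0=1.
\end{equation*}

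The genuinely delicate point is the passage from $\sigma_0>0$ (linear stability of $(u,v)$ as a steady state of the parabolic counterpart of \eqref{5.16}) to $r(T)<1$: everything rests on extracting the identity $T\Phi_0=\Phi_0-\sigma_0(\mathcal{D}+m)^{-1}\Phi_0$ and on having Krein--Rutman theory available in the product cone — strong positivity of $T$, simplicity and strict dominance of $r(T)$, and existence of a strictly positive adjoint eigenfunction — which is exactly the machinery underlying \cite[Th. 1.3]{ALLG} and \cite[Th. 13.1]{Am76}. The remaining items — enlarging $m$, checking that $(u,v)$ lies in the interior of the cone, and verifying the chain of identities defining $T$ — are routine.
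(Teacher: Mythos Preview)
Your proof is correct and follows essentially the same route as the paper's. The paper's own proof is a one-line sketch: it invokes Corollary \ref{co5.2} for the linear stability of $(u,v)$, the dominance of the principal eigenvalue of the linearization, and then the fixed point index formula (citing \cite[Th.~11.4]{Am76} rather than Th.~13.1, since $(u,v)$ lies in the interior of the cone and the cone index therefore reduces to the ordinary Leray--Schauder index). You unpack this sketch explicitly --- computing $T$, deriving the identity $T\Phi_0=\Phi_0-\sigma_0(\mathcal{D}+m)^{-1}\Phi_0$, and feeding $T\Phi_0\ll\Phi_0$ into Krein--Rutman to obtain $r(T)<1$ --- which is exactly what the phrase ``dominance of the principal eigenvalue'' is meant to encode. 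The enlargement of $m$ you perform to ensure strong positivity of $T$ is a harmless normalization (the fixed points and their indices are unchanged), and the paper simply leaves this implicit.
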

\begin{proof}
This is an immediate consequence of Corollary \ref{co5.2}, the dominance of the principal eigenvalue of the linearization at $(u,v)$, and the definition of the fixed point index (see \cite[Th. 11.4]{Am76}).
\end{proof}

Now, we have all the ingredients to complete the proof of Theorem \ref{th5.3}.

\subsection*{Proof of Theorem \ref{th5.3}:} Subsequently, we will denote by $\mf{C}$ the set of coexistence
states of \eqref{5.16}. By construction, we already know that $\mf{C}$ lies in the interior of $\mc{U}\times \mc{V}$. Then, by the additivity property of the fixed point index (see \cite[Th. 11.1]{Am76}), we have that
\begin{align*}
i_{P_{W^{2,\infty}_{\mathcal{B}_1}}\times P_{W^{2,\infty}_{\mathcal{B}_2}}} & (\mathcal{H}(1,\cdot,\cdot),\hbox{int}\,(\mathcal{U}\times\mathcal{V}))
=i_{P_{W^{2,\infty}_{\mathcal{B}_1}}\times P_{W^{2,\infty}_{\mathcal{B}_2}}}(\mathcal{H}(1,\cdot,\cdot),(0,0))\\
&+i_{P_{W^{2,\infty}_{\mathcal{B}_1}}\times P_{W^{2,\infty}_{\mathcal{B}_2}}}(\mathcal{H}(1,\cdot,\cdot),(\theta_{\{d_1,\lambda,a\}},0))
+i_{P_{W^{2,\infty}_{\mathcal{B}_1}}\times P_{W^{2,\infty}_{\mathcal{B}_2}}}(\mathcal{H}(1,\cdot,\cdot),(0,\theta_{\{d_2,\mu,d\}}))\\
&+\sum_{(u,v)\in\mf{C}} i_{P_{W^{2,\infty}_{\mathcal{B}_1}}\times P_{W^{2,\infty}_{\mathcal{B}_2}}}(\mathcal{H}(1,\cdot,\cdot),(u,v))
\end{align*}
provided both semitrivial states exist.
Thus, as a straightforward consequence of Lemmas \ref{le5.6}, \ref{le5.7}, \ref{le5.9}, and \ref{le5.10}, the system \eqref{5.16} admits a unique coexistence state in each of the following situations
\begin{enumerate}[(A)]
\item\label{item5.1} If $\sigma[d_1{\mathcal{L}}_1-{\lambda},\mathcal{B}_1,\Omega]<0$, $\sigma[d_2{\mathcal{L}}_2-{\mu},\mathcal{B}_2,\Omega]> 0$ and $\sigma[d_2\mathcal{L}_2-\mu-c\theta_{\{d_1,\lambda,a\}},\mathcal{B}_2,\Omega]<0$.
\item\label{item5.2} If $\sigma[d_1{\mathcal{L}}_1-{\lambda},\mathcal{B}_1,\Omega]<0$ and $\sigma[d_2{\mathcal{L}}_2-{\mu},\mathcal{B}_2,\Omega]< 0$. This is the case where both semitrivial states exist. By the monotonicity of the principal eigenvalue with respect to the potential, in this case,
\[
  \sigma[d_2\mathcal{L}_2-\mu-c\theta_{\{d_1,\lambda,a\}},\mathcal{B}_2,\Omega]<0,\qquad  \sigma[d_1\mathcal{L}_1-\lambda-b\theta_{\{d_2,\mu,d\}},\mathcal{B}_1,\Omega]<0.
\]
\item\label{item5.3} If $\sigma[d_1{\mathcal{L}}_1-{\lambda},\mathcal{B}_1,\Omega]>0$,  $\sigma[d_2{\mathcal{L}}_2-{\mu},\mathcal{B}_2,\Omega]<0$, and
     $\sigma[d_1\mathcal{L}_1-\lambda-b\theta_{\{d_2,\mu,d\}},\mathcal{B}_1,\Omega]<0$.
\end{enumerate}
On the other hand, there are no coexistence states in the following cases:
\begin{enumerate}[(A)]
\setcounter{enumi}{3}
\item\label{item5.4} If $\sigma[d_2{\mathcal{L}}_2-{\mu},\mathcal{B}_2,\Omega]<0$ and $\sigma[d_1\mathcal{L}_1-\lambda-b\theta_{\{d_2,\mu,d\}},\mathcal{B}_1,\Omega]>0$, which implies $\sigma[d_1{\mathcal{L}}_1-{\lambda},\mathcal{B}_1,\Omega]>0$.
\item\label{item5.5} If $\sigma[d_1{\mathcal{L}}_1-{\lambda},\mathcal{B}_1,\Omega]>0$ and $\sigma[d_2{\mathcal{L}}_2-{\mu},\mathcal{B}_2,\Omega]> 0$. In such a case, there are no semitrivial states.
\item\label{item5.6} If $\sigma[d_1{\mathcal{L}}_1-{\lambda},\mathcal{B}_1,\Omega]<0$ and $\sigma[d_2\mathcal{L}_2-\mu-c\theta_{\{d_1,\lambda,a\}},\mathcal{B}_2,\Omega]>0$, which implies
    $\sigma[d_2{\mathcal{L}}_2-{\mu},\mathcal{B}_2,\Omega]> 0$.
\end{enumerate}

All these regions in the parameter space $(\l,\mu)$ have been represented on Figure \ref{Fig6}, where we are assuming the functions $\l$ and $\mu$ to be constant on $\O$.

\begin{figure}[h!]
\centering
\includegraphics[scale=0.22]{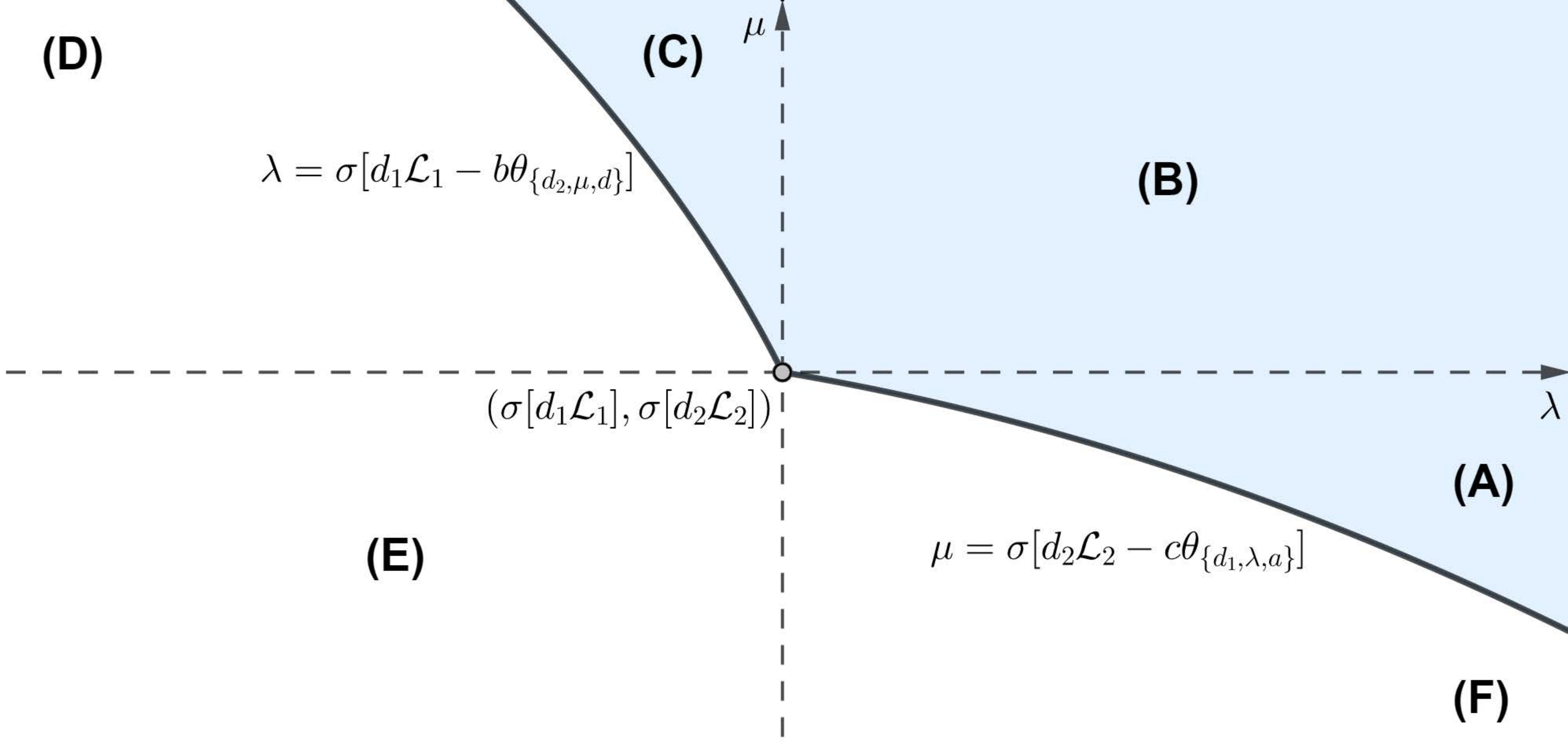}
\caption{Plot of an admissible $(\lambda,\mu)$-plane under the assumptions of Theorem \ref{th5.3}.
 For the sake of simplicity, the boundary conditions and the domain have been omitted from the principal eigenvalues. }
\label{Fig6}
\end{figure}

In Figure \ref{Fig6}, the curves separating the six regions (\ref{item5.1})--(\ref{item5.6}) consist of
the curves of change of stability of the semitrivial positive solutions,
\[
  \sigma[d_1\mathcal{L}_1-\lambda-b\theta_{\{d_2,\mu,d\}},\mathcal{B}_1,\Omega]=0, \qquad
  \sigma[d_2\mathcal{L}_2-\mu -c\theta_{\{d_1,\lambda,a\}},\mathcal{B}_2,\Omega]=0.
\]
together with the coordinate axis
\[
  \sigma[d_1\mathcal{L}_1-\lambda,\mathcal{B}_1,\Omega]=0, \qquad
   \sigma[d_2\mathcal{L}_2-\mu,\mathcal{B}_2,\Omega]=0.
\]
To complete the proof of Theorem \ref{th5.3} it remains to prove the following assertions
on each of these curves:
\begin{enumerate}[(I)]
\item\label{proof5.3.itemI} There are no coexistence states if either $\sigma[d_1{\mathcal{L}}_1-{\lambda},\mathcal{B}_1,\Omega]=0$ and $\sigma[d_2{\mathcal{L}}_2-{\mu},\mathcal{B}_2,\Omega]\ge 0$, or $\sigma[d_2{\mathcal{L}}_2-{\mu},\mathcal{B}_2,\Omega]=0$ and $\sigma[d_1{\mathcal{L}}_1-{\lambda},\mathcal{B}_1,\Omega]\ge 0$.
\item\label{proof5.3.itemII} There are no coexistence states if either
\[
  \sigma[d_1{\mathcal{L}}_1-{\lambda},\mathcal{B}_1,\Omega]<0\quad \hbox{and}\quad \sigma[d_2\mathcal{L}_2-\mu-c\theta_{\{d_1,\lambda,a\}},\mathcal{B}_2,\Omega]=0,
\]
which implies $\sigma[d_2{\mathcal{L}}_2-{\mu},\mathcal{B}_2,\Omega]> 0$, or else
\[
  \sigma[d_2{\mathcal{L}}_2-{\mu},\mathcal{B}_2,\Omega]<0\quad\hbox{and}\quad
  \sigma[d_1\mathcal{L}_1-\lambda-b\theta_{\{d_2,\mu,d\}},\mathcal{B}_1,\Omega]=0,
\]
which implies $\sigma[d_1{\mathcal{L}}_1-{\lambda},\mathcal{B}_1,\Omega]> 0$.
\item\label{proof5.3.itemIII} There exists a unique coexistence state if either
\[
  \sigma[d_1{\mathcal{L}}_1-{\lambda},\mathcal{B}_1,\Omega]=0\quad \hbox{and}\quad  \sigma[d_2{\mathcal{L}}_2-{\mu},\mathcal{B}_2,\Omega]< 0,
\]
which implies $\sigma[d_1\mathcal{L}_1-\lambda-b\theta_{\{d_2,\mu,d\}},\mathcal{B}_1,\Omega]<0$, or else \[
   \sigma[d_2{\mathcal{L}}_2-{\mu},\mathcal{B}_2,\Omega] =0\quad \hbox{and}\quad  \sigma[d_1{\mathcal{L}}_1-{\lambda},\mathcal{B}_1,\Omega]< 0,
\]
which implies $\sigma[d_2\mathcal{L}_2-\mu-c\theta_{\{d_1,\lambda,a\}},\mathcal{B}_2,\Omega]<0$.
\end{enumerate}
The proof of the second assertions on each of these cases follow by symmetry form the first ones. To prove the cases (\ref{proof5.3.itemI}) and (\ref{proof5.3.itemII}) we first show that  there is $\varepsilon_{0}>0$ such that
\begin{equation}
\label{5.36}
\left\{
\begin{array}{ll}
d_1(x)\mathcal{L}_1 u=(\lambda+\varepsilon_1) u-au^2+ b uv & \hbox{in}\;\;\Omega,\\
d_2(x)\mathcal{L}_2 v=(\mu+\varepsilon_2) v-dv^2+ c uv & \hbox{in}\;\;\Omega,\\
\mathcal{B}_1 u=\mathcal{B}_2 v=0 & \hbox{on}\;\;\partial\Omega
\end{array}
\right.
\end{equation}
admits a coexistence state for every $\varepsilon_1,\varepsilon_2\in (-\varepsilon_0,\varepsilon_0)$
if \eqref{5.16} admits a coexistence state. This is a direct consequence of the implicit function theorem,
for as  the linearization of \eqref{5.36} with respect to $(u,v)$,  particularized at $(\e_1,\e_2)=(0,0)$,   at the coexistence state of \eqref{5.16}  is invertible, because, due to Corollary \ref{co5.2}, its principal eigenvalue is positive. Actually,  the implicit function theorem provides us with a smooth surface of coexistence states for sufficiently small  $\varepsilon_1, \varepsilon_2$.
\par
Suppose we are in Case (\ref{proof5.3.itemI}), i.e., $\sigma[d_1{\mathcal{L}}_1-{\lambda},\mathcal{B}_1,\Omega]=0$ and $\sigma[d_2{\mathcal{L}}_2-{\mu},\mathcal{B}_2,\Omega]\ge 0$, and that,  arguing by contradiction,  the problem \eqref{5.16} admits some coexistence  state. Then, $\varepsilon_0>0$ exists such that \eqref{5.36} admits a coexistence  state for each $\varepsilon_1,\varepsilon_2\in [-\varepsilon_0,\varepsilon_0]$. Consider \eqref{5.36} with $\varepsilon_1=\varepsilon_2=-\varepsilon_0$. Then, by the monotonicity of the principal eigenvalue with respect to the potential,
\begin{equation*}
\sigma[d_1{\mathcal{L}}_1-{\lambda}+\varepsilon_0,\mathcal{B}_1,\Omega]>0\quad\hbox{and}\quad \sigma[d_2{\mathcal{L}}_2-{\mu}+\varepsilon_0,\mathcal{B}_2,\Omega]> 0.
\end{equation*}
Thus, the problem \eqref{5.36} with $\varepsilon_1=\varepsilon_2=-\varepsilon_0$ fits Case (\ref{item5.5}) above, for which we already know that a coexistence state should not exist. This contradiction ends the proof of Case (\ref{proof5.3.itemI}).
\par
Now, assume that we are in Case (\ref{proof5.3.itemII}), i.e., $\sigma[d_1{\mathcal{L}}_1-{\lambda},\mathcal{B}_1,\Omega]<0$ and $\sigma[d_2\mathcal{L}_2-\mu-c\theta_{\{d_1,\lambda,a\}},\mathcal{B}_2,\Omega]=0$ (so,  $\sigma[d_2{\mathcal{L}}_2-{\mu},\mathcal{B}_2,\Omega]> 0$), and that the problem \eqref{5.16} admits some coexistence state. Arguing as before, there is $\varepsilon_0>0$ such that \eqref{5.36} admits a coexistence state for each $\varepsilon_1,\varepsilon_2\in [-\varepsilon_0,\varepsilon_0]$. Consider \eqref{5.36} with $\varepsilon_1=0$ and $\varepsilon_2=-\varepsilon_0$. Thanks to the monotonicity of the principal eigenvalue with respect to the potential, we have that
\begin{equation*}
\sigma[d_2\mathcal{L}_2-(\mu-\varepsilon_0)-c\theta_{\{d_1,\lambda,a\}},\mathcal{B}_2,\Omega]>
\sigma[d_2\mathcal{L}_2-\mu-c\theta_{\{d_1,\lambda,a\}},\mathcal{B}_2,\Omega]=0.
\end{equation*}
Hence, the problem \eqref{5.36}, with $\varepsilon_1=0$ and $\varepsilon_2=-\varepsilon_0$, fits Case  (\ref{item5.6}) above, for which we already know that should not exist any coexistence state. This contradiction ends  the proof.
\par
It remains to prove Case (\ref{proof5.3.itemIII}). Assume that
\begin{equation}
\label{5.37}
\sigma[d_1{\mathcal{L}}_1-{\lambda},\mathcal{B}_1,\Omega]=0\quad\hbox{and}\quad\sigma[d_2{\mathcal{L}}_2-{\mu},\mathcal{B}_2,\Omega]< 0.
\end{equation}
Then,
\begin{equation}
\label{5.38}
\sigma[d_1\mathcal{L}_1-\lambda-b\theta_{\{d_2,\mu,d\}},\mathcal{B}_1,\Omega]<0.
\end{equation}
The coexistence state will be constructed through a compactness argument. For every integer $n\geq 1$, consider the problem
\begin{equation}
\label{5.39}
\left\{
\begin{array}{ll}
d_1(x)\mathcal{L}_1 u=(\lambda+\frac{1}{n}) u-au^2+ b uv & \hbox{in}\;\;\Omega,\\
d_2(x)\mathcal{L}_2 v=\mu v-dv^2+ c uv & \hbox{in}\;\;\Omega,\\
\mathcal{B}_1 u=\mathcal{B}_2 v=0 & \hbox{on}\;\;\partial\Omega.
\end{array}
\right.
\end{equation}
Since
\begin{equation*}
\sigma[d_2{\mathcal{L}}_2-{\mu},\mathcal{B}_2,\Omega]< 0\quad\hbox{and}\quad
\sigma\Big[ d_1{\mathcal{L}}_1-\Big({\lambda}+\frac{1}{n}\Big),\mathcal{B}_1,\Omega\Big]<0,
\end{equation*}
\eqref{5.39} satisfies  (\ref{item5.2}) above, and hence, it admits a unique coexistence state, $(u_n,v_n)$. By the proof of Lemma \ref{le5.4}, these coexistence states are uniformly bounded in $n\geq 1$. Moreover, for every $n\geq 1$,
\begin{align*}
u_n & =(d_1(x){\mathcal{L}}_1+m)^{-1} \left[ \Big({\lambda}+\frac{1}{n}+m-a u_n+ b v_n\Big) u_n\right],\\
v_n & =(d_2(x){\mathcal{L}}_2+m)^{-1}\left[ ({\mu}+m-d v_n+ c u_n) v_n\right].
\end{align*}
On the other hand, by the choice of $m$, $(d_1{\mathcal{L}}_1+m)^{-1}$ and $(d_2{\mathcal{L}}_2+m)^{-1}$ are strongly positive compact operators. Hence, $(u_n,v_n)$ admits a convergent subsequence, renamed by $(u_n,v_n)$, that converges to some $(u_0,v_0)$ as $n\uparrow +\infty$. Necessarily, $(u_0,v_0)\in W_{\mathcal{B}_1}^{2,\infty}\times W_{\mathcal{B}_2}^{2,\infty}$, $u_0,v_0\ge 0$ and $(u_0,v_0)$ satisfies
\begin{align*}
u_0 & =(d_1(x){\mathcal{L}}_1+m)^{-1} [({\lambda}+m-a u_0+ b v_0) u_0], \\
v_0 & =(d_2(x){\mathcal{L}}_2+m)^{-1} [({\mu}+m-d v_0+ c u_0) v_0],
\end{align*}
i.e., $(u_0,v_0)$ is a component-wise non-negative solution of \eqref{5.16}. Furthermore, by the Arzel\`a --Ascoli Theorem, the convergence is uniform $\bar\Omega$. Thus, either $(u_0,v_0)=(0,0)$, or $(u_0,v_0)=(0,\theta_{\{d_2,\mu,d\}})$, or $(u_0,v_0)$ is a coexistence state, for as $\t_{\{d_1,\l,a\}}=0$.
Since $(u_n,v_n)$ is a coexistence state of \eqref{5.39}, we have that, for every $n\geq 1$,
\begin{align*}
\sigma\Big[d_1(x)\mathcal{L}_1-\Big(\lambda+\frac{1}{n}\Big)+a u_n-b v_n,\mathcal{B}_1,\Omega\Big]& =0,\\
\sigma[d_2(x)\mathcal{L}_2 -\mu +d v_n- c u_n,\mathcal{B}_2,\Omega] & =0.
\end{align*}
Moreover, as the convergence is uniform on $\bar \O$, by the continuous dependence of the principal eigenvalues with respect to the potential, it becomes apparent that
\begin{equation*}
\sigma[d_2(x)\mathcal{L}_2 -\mu,\mathcal{B}_2,\Omega]=0
\end{equation*}
if $(u_0,v_0)=(0,0)$, which contradicts \eqref{5.37}. Similarly,
\begin{equation*}
\sigma[d_1(x)\mathcal{L}_1-\lambda-b \theta_{\{d_2,\mu,d\}},\mathcal{B}_1,\Omega]=0
\end{equation*}
if $(u_0,v_0)=(0,\theta_{\{d_2,\mu,d\}})$, which contradicts \eqref{5.38}. Therefore, $(u_0,v_0)$ is a coexistence state of \eqref{5.16} and the existence is established.
\par
The uniqueness can be easily derived with the following argument. Suppose \eqref{5.37} holds and  \eqref{5.16} admits two coexistence states, $(u_j,v_j)$, $j=1,2$. Then, since they are linearly stable, they must be  non-degenerate and hence applying the implicity function, it becomes apparent that the problem \eqref{5.36} possesses at least two coexistence states  for sufficiently small $\e_1$ and $\e_2$.  By choosing $\e_2=0$ and $\e_1>0$ we can enter on region (\ref{item5.2}), where we already know that the system admits a unique coexistence state. This contradiction shows the uniqueness.
\par
To conclude the proof of Theorem \ref{th5.3}, it suffices to observe  that whenever there exists a unique coexistence state which is linearly stable,  it is a global attractor for the component-wise positive solutions of the parabolic model, by compressiveness (see \cite[Rm. 33.2 \& Th. 33.3]{Hess}).
\par
According to Theorem \ref{th5.3}, when $(\lambda,\mu)\in\mathbb{R}^2$ is located in the shaded region
of Figure \ref{Fig6} (regions (\ref{item5.1}), (\ref{item5.2}), (\ref{item5.3}) and the semi-axis between them), then \eqref{5.16} admits a unique coexistence state which is a global attractor for the component-wise positive solutions. Moreover, if $(\lambda,\mu)$ belongs to either regions (\ref{item5.4}) or (\ref{item5.3}), then the stable semitrivial solution is actually a global attractor for the component-wise positive solutions since there are no coexistence states and the system is compressive (see \cite[Rm. 33.2 \& Th. 33.3]{Hess}). Finally, in region (\ref{item5.5}) the trivial solution must be a global attractor with respect to the positive solutions of the parabolic counterpart of \eqref{5.16}.

\section{Uniqueness of coexistence states in competitive systems}
\label{sec-6}

\noindent Throughout this section we maintain the notations introduced in Section \ref{sec-5} for the uniformly elliptic differential operators, $\mathcal{L}_i$, the mixed boundary operators, $\mathcal{B}_i$, as well as for the functions they are made of, i.e., $A_i\in\mathcal{M}_N^\mathrm{sym}(\mathcal{C}^1(\bar\Omega))$, $C_i\in\mathcal{C}(\bar\Omega)$ and $\beta_i\in\mathcal{C}(\Gamma_{\mathcal{R}}^{i})$, $i\in\{1,2\}$. The competition model of Lotka--Volterra  is given by
\begin{equation}
\label{6.1}
\left\{
\begin{array}{ll}
d_1(x)\mathcal{L}_1 u=\lambda u-au^2 - buv & \mbox{in}\;\;\Omega,\\
d_2(x)\mathcal{L}_2 v=\mu v-d v^2 - cuv & \mbox{in}\;\;\Omega,\\
\mathcal{B}_1 u=\mathcal{B}_2 v = 0 & \mbox{on}\;\;\partial\Omega,
\end{array}
\right.
\end{equation}
where $d_1,d_2\in\mathcal{C}(\bar\Omega;(0,+\infty))$ are the diffusion rates of the species, $\lambda,\mu\in\mathcal{C}(\bar\Omega)$ stand for their growth rates, $a,d\in\mathcal{C}(\bar\Omega;(0,+\infty))$ are their intra-specific competition coefficients, and $b,c\in\mathcal{C}(\bar\Omega;(0,+\infty))$ their inter-specific competition rates. It is said that the species exhibit \emph{low competition} if
\begin{equation*}
bc\lneq ad\quad\mbox{in}\;\;\Omega.
\end{equation*}
The next result provides us with a  counterpart of Theorem \ref{th5.1} and Corollary \ref{co5.2} for competition models. Some of the comments on Section \ref{sec-5} after the statement of Theorem \ref{th5.1} also apply here.

\begin{theorem}
\label{th6.1}
Assume that $\kappa=\frac{bc}{ad}\lneq 1$ in $\Omega$ and
\begin{equation*}
\max_{\bar{\Omega}}\left( \frac{ad^2}{c^3}\frac{d_2}{d_1}F_{-}(\kappa)\right) \le \min_{\bar{\Omega}}\left( \frac{ad^2}{c^3}\frac{d_2}{d_1}F_{+}(\kappa)\right),
\end{equation*}
where $F_\pm$ are the functions defined in  \eqref{5.2}. Then, every coexistence state of \eqref{6.1}
is linearly stable. In particular, there exist functions $d_1(x),d_2(x)\in\mathcal{C}(\bar\Omega;(0,+\infty))$, as close to zero as desired, such that every coexistence  state of \eqref{6.1} is linearly stable.
\end{theorem}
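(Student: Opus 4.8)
The plan is to run, almost word for word, the proof of Theorem~\ref{th5.1} together with the rescaling used in Corollary~\ref{co5.2}; the only genuinely new point is that the linearization of a competitive system is competitive rather than cooperative, which is repaired by an elementary reflection of one component. First I would reduce to $d_1\equiv d_2\equiv 1$: given a coexistence state $(u,v)$ of \eqref{6.1}, divide the $u$-equation by $d_1(x)$ and the $v$-equation by $d_2(x)$. This leaves $\mathcal{L}_1,\mathcal{L}_2$ unchanged and replaces $(\lambda,a,b)$, $(\mu,c,d)$ by $(\lambda/d_1,a/d_1,b/d_1)$, $(\mu/d_2,c/d_2,d/d_2)$, all still in $\mathcal{C}(\bar\Omega)$ with $a/d_1,\,d/d_2>0$. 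A one-line computation gives $\widehat\kappa:=\tfrac{(b/d_1)(c/d_2)}{(a/d_1)(d/d_2)}=\kappa$ and $\tfrac{(a/d_1)(d/d_2)^2}{(c/d_2)^3}=\tfrac{ad^2}{c^3}\tfrac{d_2}{d_1}$, so the hypothesis of the theorem is precisely condition \eqref{5.3} for the rescaled coefficients. From now on I assume $d_1=d_2=1$, so $\mathcal{L}_1 u=(\lambda-au-bv)u$ and $\mathcal{L}_2 v=(\mu-dv-cu)v$.

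The linear stability of $(u,v)$ as a steady state of the parabolic counterpart of \eqref{6.1} is decided by the sign of the principal eigenvalue of
\[
\left\{
\begin{array}{l}
[\mathcal{L}_1-\lambda+2au+bv]\varphi+bu\psi=\sigma\varphi,\\
[\mathcal{L}_2-\mu+2dv+cu]\psi+cv\varphi=\sigma\psi,\\
\mathcal{B}_1\varphi=\mathcal{B}_2\psi=0.
\end{array}
\right.
\]
Here the off-diagonal couplings $bu$ and $cv$ are \emph{positive}, so this system is competitive, not cooperative as the symbiotic \eqref{5.4}. However, the substitution $\psi\mapsto-\psi$ (conjugation by $\mathrm{diag}(1,-1)$, which preserves the spectrum) turns it into
\[
\left\{
\begin{array}{l}
[\mathcal{L}_1-\lambda+2au+bv]\varphi-bu\psi=\sigma\varphi,\\
[\mathcal{L}_2-\mu+2dv+cu]\psi-cv\varphi=\sigma\psi,
\end{array}
\right.
\]
whose off-diagonal couplings $-bu<0$, $-cv<0$ are exactly of the type occurring in \eqref{5.4}. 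Since $b(x)u(x)>0$ and $c(x)v(x)>0$ in $\Omega$, \cite[Th.~1.3]{ALLG} applies and yields a unique principal eigenvalue $\sigma_0$, algebraically simple and dominant, with eigenfunction $(\varphi,\psi)$ satisfying $\varphi\gg 0$, $\psi\gg 0$; and $\sigma_0$ is the quantity governing the stability of $(u,v)$.

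From here the argument coincides with that of Theorem~\ref{th5.1}. Multiplying the first equation by $u$ and using $\mathcal{L}_1 u=(\lambda-au-bv)u$ gives, after cancellation, $\sigma_0 u\varphi=(u\mathcal{L}_1\varphi-\varphi\mathcal{L}_1 u)+u^2(a\varphi-b\psi)$, which is formally \eqref{5.5}; the second equation likewise gives the analogue of \eqref{5.6}. Multiplying these by $\varphi^2/u^2$ and $\psi^2/v^2$, integrating over $\Omega$, and applying the Picone identity (Theorem~\ref{th2.1}) with $g(t)=t^2$ to the pairs $(u,\varphi)$ and $(v,\psi)$ --- the boundary terms vanish since on each component of $\partial\Omega$ either both Dirichlet data or both Robin data of the relevant functions vanish, and the bulk terms are nonnegative by the uniform ellipticity of $\mathcal{L}_1,\mathcal{L}_2$ --- produces exactly \eqref{5.7}, \eqref{5.8} and hence \eqref{5.9}. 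Choosing $\xi>0$ so that the integrand on the right of \eqref{5.9} is nonnegative reduces, after dividing by $\psi^3$ and the substitutions $y=\varphi/\psi$, $z=(c/d)y$, to the scalar inequality \eqref{5.10}, and the analysis of $F(z;k)=z^2\frac{z-k}{z-1}$ shows that such a $\xi$ exists iff \eqref{5.12} holds, i.e.\ iff \eqref{5.3} holds for the rescaled coefficients --- our hypothesis. Hence $\sigma_0\ge 0$, and $\sigma_0=0$ is excluded exactly as in Theorem~\ref{th5.1} (it would force $\varphi\propto u$, $\psi\propto v$, hence $a\varphi=b\psi$ and $d\psi=c\varphi$ in $\Omega$, i.e.\ $bc=ad$, contradicting $\kappa\lneq 1$). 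Thus $\sigma_0>0$ and $(u,v)$ is linearly stable. For the final assertion, take $d_1:=\varepsilon a d^2$ and $d_2:=\varepsilon c^3$ with $\varepsilon>0$ arbitrary; then $\frac{ad^2}{c^3}\frac{d_2}{d_1}\equiv 1$, so the hypothesis holds because $F_-(\kappa)\le 1\le F_+(\kappa)$, while $d_1,d_2$ can be made as small as one wishes by letting $\varepsilon\downarrow 0$.

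The only obstacle, and the sole place where the competitive case departs from the symbiotic one of Section~\ref{sec-5}, is the wrong sign of the off-diagonal couplings in the linearized eigenvalue problem, which a priori obstructs the use of the principal-eigenvalue theory for cooperative systems; once one observes that the reflection $\psi\mapsto-\psi$ restores cooperativity, the entire Picone machinery transfers without change, and the rescaling reduction accommodates the variable diffusivities exactly as in Corollary~\ref{co5.2}.
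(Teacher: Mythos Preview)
Your proposal is correct and follows essentially the same approach as the paper: rescale the coefficients by dividing through by $d_1(x)$ and $d_2(x)$ (exactly as in Corollary~\ref{co5.2}), observe that the reflection $\psi\mapsto-\psi$ converts the competitive linearization into a cooperative one to which \cite[Th.~1.3]{ALLG} applies, and then reproduce verbatim the Picone-identity computations \eqref{5.5}--\eqref{5.12} from the proof of Theorem~\ref{th5.1}. Your write-up is in fact more explicit than the paper's in verifying that the rescaled hypothesis coincides with \eqref{5.3} and in checking the algebra leading to the analogues of \eqref{5.5} and \eqref{5.6}; the paper simply states that ``the rest of the proof follows identical patterns as the proof of Theorem~\ref{th5.1} and Corollary~\ref{co5.2}.''
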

\begin{proof}
Consider the elliptic problem
\begin{equation}
\label{6.2}
\left\{
\begin{array}{ll}
\mathcal{L}_1 u=\tilde{\lambda} u-\tilde{a}u^2 - \tilde{b}uv & \mbox{in}\;\;\Omega,\\
\mathcal{L}_2 v=\tilde{\mu} v-\tilde{d} v^2 - \tilde{c}uv & \mbox{in}\;\;\Omega,\\
\mathcal{B}_1 u=\mathcal{B}_2 v = 0 & \mbox{on}\;\;\partial\Omega,
\end{array}
\right.
\end{equation}
obtained from \eqref{6.1} by performing the changes
\begin{equation*}
\tilde{\lambda}=\frac{\lambda}{d_1},\quad \tilde{a}=\frac{a}{d_1},\quad\tilde{b}=\frac{b}{d_1}, \quad
\tilde{\mu}=\frac{\mu}{d_2},\quad \tilde{c}=\frac{c}{d_2},\quad\hbox{and}\quad\tilde{d}=\frac{d}{d_2}.
\end{equation*}
Let $(u,v)$ be a coexistence state of \eqref{6.2}, and so, of \eqref{6.1}. Linearizing \eqref{6.2} at $(u,v)$ yields
\begin{equation*}
\left\{
\begin{array}{ll}
[\mathcal{L}_1 -\tilde\lambda +2\tilde a u+ \tilde b v]\varphi+ \tilde b u\psi=\sigma \varphi & \hbox{in}\;\;\Omega,\\[0ex]
[\mathcal{L}_2 -\tilde\mu +2\tilde d v + \tilde c u]\psi+ \tilde c v\varphi= \sigma \psi & \hbox{in}\;\;\Omega,\\[0ex]
\mathcal{B}_1 \varphi=\mathcal{B}_2 \psi=0 & \hbox{on}\;\;\partial\Omega,
\end{array}
\right.
\end{equation*}
which is an eigenvalue problem for a quasi-cooperative operator. Thus, by making the change $\tilde\varphi:=\varphi$ and $\tilde\psi:=-\psi$, the previous eigenvalue problem is equivalent to the next one
\begin{equation}
\label{6.3}
\left\{
\begin{array}{ll}
[\mathcal{L}_1 -\tilde\lambda +2\tilde a u+\tilde b v]\tilde\varphi- \tilde b u\tilde\psi=\sigma\tilde\varphi & \hbox{in}\;\;\Omega,\\[0ex]
[\mathcal{L}_2 -\tilde \mu +2\tilde d v + \tilde c u]\tilde\psi- \tilde c v\tilde\varphi=\sigma \tilde\psi & \hbox{in}\;\;\Omega,\\[0ex]
\mathcal{B}_1 \tilde\varphi=\mathcal{B}_2 \tilde\psi=0 & \hbox{on}\;\;\partial\Omega,
\end{array}
\right.
\end{equation}
of cooperative type, which shares the properties of \eqref{5.4}. In particular, there exists a unique principal eigenvalue, $\sigma_0$, associated with it there is an eigenfunction, $(\tilde\varphi,\tilde\psi)$, such that $\tilde\varphi\gg 0$ and $\tilde\psi\gg 0$. It should be noted that multiplying the equations in \eqref{6.3} by $u$ and $v$, respectively, and particularizing at $\sigma=\sigma_0$, yields
\begin{align*}
u\mathcal{L}_1\tilde\varphi-\tilde\varphi\mathcal{L}_1 u+u^2(\tilde{a} \tilde\varphi-\tilde{b}\tilde\psi)& =\sigma_0 u \tilde\varphi, \\ v\mathcal{L}_2\tilde\psi-\tilde\psi\mathcal{L}_2 v+v^2(\tilde{d} \tilde\psi-\tilde{c}\tilde\varphi) & =\sigma_0 v\tilde\psi,
\end{align*}
which mimic the equations \eqref{5.5} and \eqref{5.6}, respectively. The rest of the proof follows identical patterns as the proof of Theorem \ref{th5.1} and Corollary \ref{co5.2}.
\end{proof}

As in Theorem \ref{th5.3}, the next result shows how the linear stability of the semitrivial
solutions determine the existence of coexistence states.

\begin{theorem}
\label{th6.2}
Assume that $d_1,d_2\in\mathcal{C}(\bar\Omega;(0,+\infty))$ and $\kappa=\frac{bc}{ad}\lneq 1$ in $\Omega$. If
\begin{equation*}
\max_{\bar{\Omega}}\left( \frac{ad^2}{c^3}\frac{d_2}{d_1}F_{-}(\kappa)\right)\le \min_{\bar{\Omega}}\left(\frac{ad^2}{c^3}\frac{d_2}{d_1}F_{+}(\kappa)\right),
\end{equation*}
then:
\begin{enumerate}[{\rm a)}]
\item\label{th6.2.item1} If both semitrivial solutions exist and are linearly unstable, then \eqref{6.1} admits a unique coexistence state.  Moreover, it is linearly stable.
\item\label{th6.2.item2} In any other case the system \eqref{6.1} does not admit any coexistence state.
\item\label{th6.2.item3} Both semitrivial solutions  cannot be linearly stable simultaneously.
\item\label{th6.2.item4} If a solution of \eqref{6.1} is linearly stable, then it is a global attractor for the component-wise positive solutions of the parabolic counterpart of \eqref{6.1}.
\end{enumerate}
\end{theorem}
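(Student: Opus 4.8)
\noindent\emph{Proof proposal for Theorem \ref{th6.2}.}
The plan is to run the fixed point index scheme of the proof of Theorem \ref{th5.3}, with the simplifications afforded by the competitive structure, feeding in the new stability statement of Theorem \ref{th6.1}. First I would keep the normalization \eqref{6.2} introduced in the proof of Theorem \ref{th6.1}, so that the principal parts are $\mathcal{L}_1$ and $\mathcal{L}_2$. Here the a priori bounds are immediate and require no analogue of \eqref{5.17}: any component-wise non-negative solution $(u,v)$ of \eqref{6.1} satisfies $d_1\mathcal{L}_1 u\le\lambda u-au^2$ and $d_2\mathcal{L}_2 v\le\mu v-dv^2$, because $b,c,u,v\ge 0$, so the comparison principle gives $0\le u\le\theta_{\{d_1,\lambda,a\}}$ and $0\le v\le\theta_{\{d_2,\mu,d\}}$ in $\Omega$. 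Fixing $m$ large as in \eqref{5.21}--\eqref{5.22}, I would then introduce the homotopy-uncoupled family
\begin{equation*}
\left\{
\begin{array}{ll}
d_1(x)\mathcal{L}_1 u=\lambda u-au^2-\gamma b uv & \hbox{in}\;\;\Omega,\\
d_2(x)\mathcal{L}_2 v=\mu v-dv^2-\gamma c uv & \hbox{in}\;\;\Omega,\\
\mathcal{B}_1 u=\mathcal{B}_2 v=0 & \hbox{on}\;\;\partial\Omega,
\end{array}
\right.
\end{equation*}
$\gamma\in[0,1]$, and the associated compact operator $\mathcal{H}(\gamma,\cdot,\cdot)$ on $P_{W^{2,\infty}_{\mathcal{B}_1}}\times P_{W^{2,\infty}_{\mathcal{B}_2}}$, whose fixed points are the component-wise non-negative solutions of \eqref{6.1}.

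Next I would compute four fixed point indices, imitating Lemmas \ref{le5.6}--\ref{le5.10}. (i) The total index of $\mathcal{H}(1,\cdot,\cdot)$ in the bounding box equals $1$, by homotopy invariance in $\gamma$ and the product formula at $\gamma=0$. (ii) The index at $(0,0)$, whose linearization is the same diagonal operator as in Lemma \ref{le5.7}, equals $1$ if $\sigma[d_1\mathcal{L}_1-\lambda,\mathcal{B}_1,\Omega]>0$ and $\sigma[d_2\mathcal{L}_2-\mu,\mathcal{B}_2,\Omega]>0$, and $0$ if either is negative. (iii) The index at $(\theta_{\{d_1,\lambda,a\}},0)$, which exists precisely when $\sigma[d_1\mathcal{L}_1-\lambda,\mathcal{B}_1,\Omega]<0$, equals $0$ when this semitrivial state is linearly unstable, i.e. $\sigma[d_2\mathcal{L}_2-\mu+c\theta_{\{d_1,\lambda,a\}},\mathcal{B}_2,\Omega]<0$, and $1$ when that eigenvalue is positive; here one must first turn the linearization into a cooperative problem through the change $\tilde\psi=-\psi$, exactly as in Theorem \ref{th6.1}, check that $\mathcal{H}(1,\cdot,\cdot)$ and the relevant derivative map the mixed cone $W^{2,\infty}_{\mathcal{B}_1}\times P_{W^{2,\infty}_{\mathcal{B}_2}}$ into itself (this is where the size of $m$ enters), and then apply the competitive analogue of Lemma \ref{le5.8}; symmetrically for $(0,\theta_{\{d_2,\mu,d\}})$. (iv) Every coexistence state has index $1$, because under the hypotheses of Theorem \ref{th6.2} the conclusion of Theorem \ref{th6.1} applies and forces the principal eigenvalue of the linearization to be positive, so the linearized fixed point operator has spectral radius below one and \cite[Th. 11.4]{Am76} gives index $1$.

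The assertions then follow from additivity of the fixed point index. If both semitrivials exist and are linearly unstable, the indices at $(0,0)$ and at both semitrivials vanish, so the indices of the coexistence states add up to $1$; since each equals $1$, there is exactly one coexistence state, and it is linearly stable by Theorem \ref{th6.1}, which is \ref{th6.2.item1}). In every other configuration one checks that the indices at $(0,0)$ and at the existing semitrivials already add up to $1$, forcing no coexistence state, which is \ref{th6.2.item2}); and two linearly stable semitrivials would make those indices add up to $2\neq 1$, which is impossible, giving \ref{th6.2.item3}). The degenerate boundary cases, where one of $\sigma[d_1\mathcal{L}_1-\lambda,\mathcal{B}_1,\Omega]$, $\sigma[d_2\mathcal{L}_2-\mu,\mathcal{B}_2,\Omega]$, $\sigma[d_2\mathcal{L}_2-\mu+c\theta_{\{d_1,\lambda,a\}},\mathcal{B}_2,\Omega]$ or $\sigma[d_1\mathcal{L}_1-\lambda-b\theta_{\{d_2,\mu,d\}},\mathcal{B}_1,\Omega]$ vanishes, are reduced to the non-degenerate ones by perturbing $\lambda,\mu$ by $\pm\varepsilon$ and following a smooth, persistent surface of coexistence states, exactly as in Cases (\ref{proof5.3.itemI})--(\ref{proof5.3.itemIII}) of the proof of Theorem \ref{th5.3}. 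Finally, \ref{th6.2.item4}) is a monotone-dynamics argument: the parabolic counterpart of \eqref{6.1} is order-preserving for the competitive order on the forward-invariant box $[0,\theta_{\{d_1,\lambda,a\}}]\times[\theta_{\{d_2,\mu,d\}},0]$, which attracts every component-wise positive orbit, so a linearly stable equilibrium contained in it is the global attractor for those orbits (cf. \cite[Rm. 33.2 \& Th. 33.3]{Hess}).

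The hard part will be step (iii): the linearization of a competition system at a semitrivial state is not cooperative, so one must carefully justify the sign-flip $\tilde\psi=-\psi$ that makes it so, re-prove the index lemma on the mixed cone $W^{2,\infty}_{\mathcal{B}_1}\times P_{W^{2,\infty}_{\mathcal{B}_2}}$, and verify both the injectivity of $I-D\mathcal{H}$ and the location of the spectral radius of the projected operator, all without the convexity shortcut that \eqref{5.17} provided in the symbiotic setting.
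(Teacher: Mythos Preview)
Your plan is correct and matches the paper's proof almost step for step: a priori bounds via the logistic comparison (the paper's Lemma \ref{le6.3}), the same index computations packaged as Lemma \ref{le6.4}, additivity to sort out cases (\ref{item6.1})--(\ref{item6.6}), and perturbation in $(\lambda,\mu)$ for the degenerate boundaries, exactly as you outline.

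Two small corrections. First, a sign slip: in the competitive model the instability eigenvalue of $(0,\theta_{\{d_2,\mu,d\}})$ is $\sigma[d_1\mathcal{L}_1-\lambda+b\theta_{\{d_2,\mu,d\}},\mathcal{B}_1,\Omega]$, not $-b\theta$. Second, you overestimate the difficulty of step (iii). At a semitrivial state, say $(\theta_{\{d_1,\lambda,a\}},0)$, the linearization of $\mathcal{H}(1,\cdot,\cdot)$ is \emph{triangular}: its second component is $(d_2\mathcal{L}_2+m)^{-1}[(\mu-c\theta_{\{d_1,\lambda,a\}}+m)v]$, which depends only on $v$ and is a positive operator on $P_{W^{2,\infty}_{\mathcal{B}_2}}$ by the choice of $m$. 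So Lemma \ref{le5.8} applies verbatim with no sign-flip; the change $\tilde\psi=-\psi$ is only needed at coexistence states, and that work is already encapsulated in Theorem \ref{th6.1}. Finally, condition \eqref{5.17} in the symbiotic proof was used solely for the a priori bounds (Lemma \ref{le5.4}), not for the index lemma, so nothing is lost here.
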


Even though the proof of this result follows the same general patterns as the proof of Theorem \ref{th5.3}, it is imperative to construct the bounded open set containing all the non-negative solutions in order to apply  the fixed point index in cones. The next result accomplished this task.

\begin{lemma}
\label{le6.3}
There exits a bounded set $\mathcal{U}\times\mathcal{V}\subset W^{2,\infty}_{\mathcal{B}_1}\times W^{2,\infty}_{\mathcal{B}_2}$, independent of $b$ and $c$, such that $(u,v)\in\mathrm{int}\;(\mathcal{U}\times \mathcal{V})$ if $(u,v)$ is a solution of \eqref{6.1} with $(u,v)\in P_{W^{2,\infty}_{\mathcal{B}_1}}\times \,P_{W^{2,\infty}_{\mathcal{B}_2}}$.
\end{lemma}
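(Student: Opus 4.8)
The plan is to exploit the sign of the competitive coupling in \eqref{6.1}. Since $b,c>0$, the interaction terms $-buv$ and $-cuv$ are non-positive whenever $u,v\ge 0$, so each component of a component-wise non-negative solution of \eqref{6.1} is a subsolution of a \emph{scalar} logistic problem, and hence is dominated, uniformly in $b$ and $c$, by the maximal non-negative solution of that logistic problem. Once these $\mathcal{C}(\bar\Omega)$-bounds are in hand, a bootstrap in \eqref{6.1} upgrades them to $W^{2,\theta}(\Omega)$-bounds for every $\theta>N$, and $\mathcal{U}\times\mathcal{V}$ is then an appropriate bounded open neighbourhood of the resulting a priori region, defined verbatim as in the proof of Lemma \ref{le5.4}. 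The only genuine departure from that proof, and the step I regard as the crux, is the a priori bound itself: for the cooperative system \eqref{5.18} one builds a common supersolution $(\bar u\Psi,\bar v\Psi)$ and invokes the maximum principle for cooperative systems, a device unavailable here because the linearisation of \eqref{6.1} is of competitive (non-cooperative) type; the point is that for competitive coupling no such device is needed, since dropping the non-positive interaction terms can only enlarge each component.

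Concretely, let $(u,v)\in P_{W^{2,\infty}_{\mathcal{B}_1}}\times P_{W^{2,\infty}_{\mathcal{B}_2}}$ solve \eqref{6.1}. As $buv\ge 0$ in $\Omega$, one has $d_1\mathcal{L}_1 u=\lambda u-au^2-buv\le\lambda u-au^2$ in $\Omega$ with $\mathcal{B}_1 u=0$ on $\partial\Omega$, so $u$ is a non-negative subsolution of the logistic problem $d_1\mathcal{L}_1 w=\lambda w-aw^2$, $\mathcal{B}_1 w=0$; the comparison principle for the scalar logistic equation then yields $0\le u\le\theta_{\{d_1,\lambda,a\}}$ in $\Omega$, with the convention $\theta_{\{d_1,\lambda,a\}}=0$ when $\sigma[d_1\mathcal{L}_1-\lambda,\mathcal{B}_1,\Omega]\ge 0$ (in which case necessarily $u\equiv 0$). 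Symmetrically, $0\le v\le\theta_{\{d_2,\mu,d\}}$ in $\Omega$. Hence $\|u\|_{\mathcal{C}(\bar\Omega)}\le M_1:=\|\theta_{\{d_1,\lambda,a\}}\|_{\mathcal{C}(\bar\Omega)}$ and $\|v\|_{\mathcal{C}(\bar\Omega)}\le M_2:=\|\theta_{\{d_2,\mu,d\}}\|_{\mathcal{C}(\bar\Omega)}$, and neither $M_1$ nor $M_2$ involves $b$ or $c$, since the barriers $\theta_{\{d_1,\lambda,a\}}$, $\theta_{\{d_2,\mu,d\}}$ do not.

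Finally, substituting $0\le u\le M_1$, $0\le v\le M_2$ back into \eqref{6.1}, the right-hand sides belong to $L^\infty(\Omega)$ with norms at most $K_1:=\|\lambda\|_\infty M_1+\|a\|_\infty M_1^2+\|b\|_\infty M_1M_2$ and $K_2:=\|\mu\|_\infty M_2+\|d\|_\infty M_2^2+\|c\|_\infty M_1M_2$; hence the $L^p$-a priori estimates for $d_1\mathcal{L}_1$, $d_2\mathcal{L}_2$ under $\mathcal{B}_1$, $\mathcal{B}_2$ (see \cite[Ch. 5]{LG13}) furnish, for every $\theta>N$, constants $C_\theta,\widetilde C_\theta>0$ depending only on the fixed data and, at worst, on $\|b\|_\infty$, $\|c\|_\infty$, with $\|u\|_{W^{2,\theta}(\Omega)}\le C_\theta$ and $\|v\|_{W^{2,\theta}(\Omega)}\le\widetilde C_\theta$ for every component-wise non-negative solution of \eqref{6.1}. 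Crucially, these estimates persist with the same constants when $b$, $c$ are replaced by $\gamma b$, $\gamma c$ with $\gamma\in[0,1]$ (the barriers ignore $\gamma$, and $\|\gamma b\|_\infty\le\|b\|_\infty$, $\|\gamma c\|_\infty\le\|c\|_\infty$ in the bootstrap), which is precisely what makes the set below independent of $b$ and $c$ and admissible for the decoupling homotopy used to prove Theorem \ref{th6.2}. One now takes $\mathcal{U}\subset W^{2,\infty}_{\mathcal{B}_1}$ and $\mathcal{V}\subset W^{2,\infty}_{\mathcal{B}_2}$ to be bounded open neighbourhoods of $\{w:\ 0\le w\le\theta_{\{d_1,\lambda,a\}},\ \|w\|_{W^{2,\theta}(\Omega)}\le C_\theta\ \text{for all}\ \theta>N\}$ and $\{w:\ 0\le w\le\theta_{\{d_2,\mu,d\}},\ \|w\|_{W^{2,\theta}(\Omega)}\le\widetilde C_\theta\ \text{for all}\ \theta>N\}$, respectively, built exactly as in the proof of Lemma \ref{le5.4}; then every component-wise non-negative solution of \eqref{6.1} lies in $\mathrm{int}\,(\mathcal{U}\times\mathcal{V})$, which is the assertion. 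The remaining verifications, namely boundedness and the precise openness requirement on $\mathcal{U}\times\mathcal{V}$ in the ambient spaces, are routine and identical to those underlying Lemma \ref{le5.4}.
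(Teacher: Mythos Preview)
Your proof is correct and follows essentially the same approach as the paper's: drop the non-positive competitive coupling terms so that each component becomes a subsolution of the associated scalar logistic problem, then invoke comparison with $\theta_{\{d_1,\lambda,a\}}$ and $\theta_{\{d_2,\mu,d\}}$ to obtain $\mathcal{C}(\bar\Omega)$-bounds that do not involve $b$ or $c$. The paper's proof stops at the pointwise inequalities $0\le u\le\theta_{\{d_1,\lambda,a\}}$, $0\le v\le\theta_{\{d_2,\mu,d\}}$ and declares the proof complete, treating the subsequent bootstrap and the explicit construction of $\mathcal{U}\times\mathcal{V}$ as routine; you spell these steps out, which is fine, though note that your bootstrap constants $C_\theta,\widetilde C_\theta$ do involve $\|b\|_\infty,\|c\|_\infty$, so the resulting $\mathcal{U}\times\mathcal{V}$ is uniform in the homotopy parameter $\gamma\in[0,1]$ (which is what is actually used) rather than literally independent of $b,c$ as functions.
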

\begin{proof}
It suffices to note that, if $(u,v)$ is a solution of \eqref{6.1} with $(u,v)\in P_{W^{2,\infty}_{\mathcal{B}_1}}\times \,P_{W^{2,\infty}_{\mathcal{B}_2}}$, then
\begin{equation*}
\left\{
\begin{array}{ll}
d_1\mathcal{L}_1 u=\lambda u-a u^2 - b u v\le \lambda u - a u^2 & \hbox{in}\;\;\Omega,\\
\mathcal{B}_1 u=0 & \hbox{on}\;\;\partial\Omega.
\end{array}
\right.
\end{equation*}
Similarly,
\begin{equation*}
\left\{
\begin{array}{ll}
d_2\mathcal{L}_2 v=\mu v-d v^2 - c u v\le \mu v - d v^2 & \hbox{in}\;\;\Omega,\\
\mathcal{B}_2 v=0 & \hbox{on}\;\;\partial\Omega.
\end{array}
\right.
\end{equation*}
Thus, $u$ and $v$ are subsolutions of the associated logistic bvp's. Moreover,
$\kappa\Psi$ provides us with supersolutions of these problems for sufficiently large $\kappa>1$, where
$\Psi$ should be chosen as in the proof of Lemma \ref{le5.4}.
Therefore, by the uniqueness of solution to these problems, it follows from \cite[Th. 7.10]{LG13}, or  Lemma 3.4 of \cite{FKLM}, that
\begin{equation*}
0\le u\le \theta_{\{d_1,\lambda,a\}}\quad \hbox{and}\quad 0\le v\le \theta_{\{d_2,\mu,d\}}\quad\hbox{in}\;\;\Omega.
\end{equation*}
This ends the proof.
\end{proof}

Subsequently, we consider $\mathcal{U}\times\mathcal{V}$, the bounded set provided by Lemma \ref{le6.3}. Let $m>0$ be large enough so that
\begin{equation*}
\sigma[d_1\mathcal{L}_1+m,\mathcal{B}_1,\Omega]>1, \qquad
\sigma[d_2\mathcal{L}_2+m,\mathcal{B}_2,\Omega]>1,
\end{equation*}
\begin{equation*}
\lambda-a u- b v+m>0\quad\hbox{and}\quad
\mu-dv-c u+m>0\quad\hbox{in}\;\;\bar\Omega\quad \hbox{for all} \;\;u\in\mathcal{U},\;\; v\in\mathcal{V},
\end{equation*}
and consider the operator $\mc{I}:\mathcal{U}\times\mathcal{V}\to W^{2,\infty}_{\mathcal{B}_1}\times W^{2,\infty}_{\mathcal{B}_2}$ defined through
\begin{equation*}
\mc{I}(u,v):=\left(
\begin{array}{ll}
(d_1{\mathcal{L}}_1+m)^{-1}[({\lambda}-au- bv+m)u]\\
(d_2{\mathcal{L}}_2+m)^{-1}[({\mu}-dv- cu+m)v]
\end{array}
\right),
\end{equation*}
which is a compact order preserving operator, by our assumptions on $m$. Moreover, its fixed points are  the solutions of \eqref{6.1}. The next result provides us with the fixed point indices of the trivial, semitrivial and coexistence states of \eqref{6.1}. As the proof follows, almost \emph{mutatis mutandis}, the patterns of Lemmas \ref{le5.6}, \ref{le5.7}, \ref{le5.9} and \ref{le5.10}, it will be omitted here by repetitive.

\begin{lemma}
\label{le6.4}
The following statements hold:
\begin{enumerate}[{\rm a)}]
\item\label{le6.4.item1} The total fixed point index is given by
\begin{equation*}
i_{P_{W^{2,\infty}_{\mathcal{B}_1}}\times P_{W^{2,\infty}_{\mathcal{B}_2}}}(\mc{I}(\cdot,\cdot),\mathrm{int}\,(\mathcal{U}\times\mathcal{V}))=1.
\end{equation*}
\item\label{le6.4.item2} If $\sigma[d_1\mathcal{L}_1-\lambda,\mathcal{B}_1,\Omega]\neq 0$ and $\sigma[d_2\mathcal{L}_2-\mu,\mathcal{B}_2,\Omega]\neq 0$, then
\begin{equation*}
i_{P_{W^{2,\infty}_{\mathcal{B}_1}}\times P_{W^{2,\infty}_{\mathcal{B}_2}}}(\mc{I}(\cdot,\cdot),(0,0))=
\left\{
\begin{array}{ll}
1 & \hbox{if}\;\; \sigma[d_1{\mathcal{L}}_1-{\lambda},\mathcal{B}_1,\Omega]>0\;\; \hbox{and}\;\; \sigma[d_2{\mathcal{L}}_2-{\mu},\mathcal{B}_2,\Omega]> 0,\\
0 & \hbox{if}\;\; \sigma[d_1{\mathcal{L}}_1-{\lambda},\mathcal{B}_1,\Omega]<0,\;\; \hbox{or}\;\; \sigma[d_2{\mathcal{L}}_2-{\mu},\mathcal{B}_2,\Omega]< 0.
\end{array}
\right.
\end{equation*}
\item\label{le6.4.item3} The indices of the semitrivial solutions are
\begin{equation*}
i_{P_{W^{2,\infty}_{\mathcal{B}_1}}\times P_{W^{2,\infty}_{\mathcal{B}_2}}}(\mc{I}(\cdot,\cdot),(\theta_{\{d_1,\lambda,a\}},0))=
\left\{
\begin{array}{ll}
0 & \hbox{if}\;\;\sigma[d_2\mathcal{L}_2-\mu+c\theta_{\{d_1,\lambda,a\}},\mathcal{B}_2,\Omega]<0,\\
1 & \hbox{if}\;\;\sigma[d_2\mathcal{L}_2-\mu+c\theta_{\{d_1,\lambda,a\}},\mathcal{B}_2,\Omega]>0,
\end{array}
\right.
\end{equation*}
\begin{equation*}
i_{P_{W^{2,\infty}_{\mathcal{B}_1}}\times P_{W^{2,\infty}_{\mathcal{B}_2}}}(\mc{I}(\cdot,\cdot),(0,\theta_{\{d_2,\mu,d\}}))=
\left\{
\begin{array}{ll}
0 & \hbox{if}\;\;\sigma[d_1\mathcal{L}_1-\lambda+b\theta_{\{d_2,\mu,d\}},\mathcal{B}_1,\Omega]<0,\\
1 & \hbox{if}\;\;\sigma[d_1\mathcal{L}_1-\lambda+b\theta_{\{d_2,\mu,d\}},\mathcal{B}_1,\Omega]>0.
\end{array}
\right.
\end{equation*}
\item\label{le6.4.item4} Under the hypothesis of Theorem \ref{th6.2}, for every coexistence state, $(u,v)$, of \eqref{6.1},
\begin{equation*}
i_{P_{W^{2,\infty}_{\mathcal{B}_1}}\times P_{W^{2,\infty}_{\mathcal{B}_2}}}(\mc{I}(\cdot,\cdot),(u,v))=1.
\end{equation*}
\end{enumerate}
\end{lemma}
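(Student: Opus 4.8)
The plan is to obtain the four index formulas by transcribing, one at a time, the arguments of Lemmas \ref{le5.6}, \ref{le5.7}, \ref{le5.9} and \ref{le5.10}, the only genuinely new ingredient being the different signs carried by the competitive cross terms and the $b,c$-independence of the a priori bounds furnished by Lemma \ref{le6.3}. For Part \ref{le6.4.item1} I would introduce the homotopy
\[ \mc I_\gamma(u,v):=\left( (d_1\mc L_1+m)^{-1}[(\lambda-au-\gamma bv+m)u],\ (d_2\mc L_2+m)^{-1}[(\mu-dv-\gamma cu+m)v]\right),\qquad \gamma\in[0,1], \]
so that $\mc I_1=\mc I$ and, since $0\le\gamma bv\le bv$ and $0\le\gamma cu\le cu$, the choice of $m$ is still admissible along the whole homotopy. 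A fixed point of $\mc I_\gamma$ is a non-negative solution of \eqref{6.1} with $\gamma b,\gamma c$ in place of $b,c$, so by (the proof of) Lemma \ref{le6.3} it satisfies $0\le u\le\theta_{\{d_1,\lambda,a\}}$, $0\le v\le\theta_{\{d_2,\mu,d\}}$; hence no $\mc I_\gamma$ has a fixed point on $\partial(\mc U\times\mc V)$, and the homotopy invariance of the fixed point index reduces the computation to $\gamma=0$, where the system decouples. The product formula then expresses the total index as the product of the two scalar logistic indices, each of which equals $1$ by a further homotopy to a linear map of spectral radius $<1$ and \cite[Th. 13.1]{Am76}, exactly as in the proof of Lemma \ref{le5.6}.

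For Part \ref{le6.4.item2} the observation is that $D_{(u,v)}\mc I(0,0)$ coincides with the linearization treated in Lemma \ref{le5.7}, because the cross terms $bv$ and $cu$ vanish at the origin; tracking the unique positive eigenvalue of $D_{(u,v)}\mc I(0,0)$ through the continuous, strictly increasing map $r\mapsto\sigma[d_i\mc L_i+m-(m+\lambda)/r,\mc B_i,\Omega]$ and invoking \cite[Th. 13.1]{Am76} yields index $1$ when both $\sigma[d_i\mc L_i-\cdot,\mc B_i,\Omega]>0$ and index $0$ as soon as one of them is negative. For Part \ref{le6.4.item4}, Theorem \ref{th6.1} asserts that every coexistence state $(u,v)$ of \eqref{6.1} is linearly stable; since the corresponding eigenvalue problem becomes, after the change $\tilde\varphi=\varphi$, $\tilde\psi=-\psi$, a cooperative problem with a dominant principal eigenvalue, linear stability means that this principal eigenvalue is positive, so $D_{(u,v)}\mc I(u,v)$ has spectral radius strictly less than $1$ and no eigenvalue $\ge1$; the index is then $1$ by the basic properties of the fixed point index, \cite[Th. 11.4]{Am76}, exactly as in Lemma \ref{le5.10}.

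The substantive part is \ref{le6.4.item3}. Here I would compute
\[ D_{(u,v)}\mc I(\theta_{\{d_1,\lambda,a\}},0)(u,v)=\left( (d_1\mc L_1+m)^{-1}[(\lambda-2a\theta_{\{d_1,\lambda,a\}}+m)u-b\theta_{\{d_1,\lambda,a\}}v],\ (d_2\mc L_2+m)^{-1}[(\mu-c\theta_{\{d_1,\lambda,a\}}+m)v]\right), \]
note that this operator is upper triangular for the splitting $W^{2,\infty}_{\mc B_1}\times\{0\}$ and $\{0\}\times W^{2,\infty}_{\mc B_2}$, that by the choice of $m$ and $0\le\theta_{\{d_1,\lambda,a\}}$ it maps $W^{2,\infty}_{\mc B_1}\times P_{W^{2,\infty}_{\mc B_2}}$ into itself, and that the spectral radius of its restriction to $\{0\}\times W^{2,\infty}_{\mc B_2}$ is $<1$ or $>1$ according to the sign of $\sigma[d_2\mc L_2-\mu+c\theta_{\{d_1,\lambda,a\}},\mc B_2,\Omega]$. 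Using the identity $\sigma[d_1\mc L_1-\lambda+a\theta_{\{d_1,\lambda,a\}},\mc B_1,\Omega]=0$ together with the strict monotonicity of the principal eigenvalue in the potential, I would check: (i) injectivity of $I-D_{(u,v)}\mc I(\theta_{\{d_1,\lambda,a\}},0)$ on $W^{2,\infty}_{\mc B_1}\times W^{2,\infty}_{\mc B_2}$ when that eigenvalue is positive, and on $W^{2,\infty}_{\mc B_1}\times P_{W^{2,\infty}_{\mc B_2}}$ when it is negative, plus the failure of surjectivity on the latter space in the negative case (taking $w_2=(d_2\mc L_2+m)^{-1}\omega$ with $\omega\gg0$ and appealing to \cite[Th. 7.10]{LG13}); and (ii) in the positive case, that $D_{(u,v)}\mc I(\theta_{\{d_1,\lambda,a\}},0)$ has no eigenvalue $\ge1$. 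Feeding these into Lemma \ref{le5.8}\,\ref{le5.8.item2})--\ref{le5.8.item3}) gives index $1$ when $\sigma[d_2\mc L_2-\mu+c\theta_{\{d_1,\lambda,a\}},\mc B_2,\Omega]>0$ and index $0$ when it is negative, verbatim the structure of Lemma \ref{le5.9}; the second identity follows by interchanging the two species.

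The main obstacle I anticipate is precisely Part \ref{le6.4.item3}: it requires verifying the exact hypotheses of the Dancer-type Lemma \ref{le5.8} on the correct function spaces, and carefully carrying through the sign change $+c\theta_{\{d_1,\lambda,a\}}\rightsquigarrow-c\theta_{\{d_1,\lambda,a\}}$ (together with the analogous change in the diagonal potentials), which is what shifts the stability threshold of the semitrivial state $(\theta_{\{d_1,\lambda,a\}},0)$ to $\sigma[d_2\mc L_2-\mu+c\theta_{\{d_1,\lambda,a\}},\mc B_2,\Omega]$; everything else is a routine transcription of Section \ref{sec-5}, which is why the full proof is omitted as repetitive.
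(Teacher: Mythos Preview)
Your proposal is correct and follows essentially the same approach the paper indicates: the authors explicitly state that the proof of Lemma~\ref{le6.4} ``follows, almost \emph{mutatis mutandis}, the patterns of Lemmas~\ref{le5.6}, \ref{le5.7}, \ref{le5.9} and \ref{le5.10}'' and omit it, and your sketch supplies exactly those transcriptions with the appropriate sign changes ($+c\theta_{\{d_1,\lambda,a\}}$ in place of $-c\theta_{\{d_1,\lambda,a\}}$, etc.) and invokes the $b,c$-independence of the bounds in Lemma~\ref{le6.3} to run the decoupling homotopy for Part~\ref{le6.4.item1}. Your computation of $D_{(u,v)}\mc I(\theta_{\{d_1,\lambda,a\}},0)$ and the subsequent verification of the hypotheses of Lemma~\ref{le5.8} are the right adaptations; nothing is missing.
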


\subsection*{Proof of Theorem \ref{th6.2}:} As a direct consequence of the additivity property of the fixed point index (see \cite[Th.11.1]{Am76}) and the conclusions of Lemma \ref{le6.4}, the problem \eqref{6.1} admits a unique coexistence state in the next case
\begin{enumerate}[(A)]
\item\label{item6.1} If $\sigma[d_1{\mathcal{L}}_1-{\lambda},\mathcal{B}_1,\Omega]<0$, $\sigma[d_2{\mathcal{L}}_2-{\mu},\mathcal{B}_2,\Omega]< 0$, and
 \[
  \sigma[d_2\mathcal{L}_2-\mu+c\theta_{\{d_1,\lambda,a\}},\mathcal{B}_2,\Omega]<0,\qquad \sigma[d_1\mathcal{L}_1-\lambda+b\theta_{\{d_2,\mu,d\}},\mathcal{B}_1,\Omega]<0,
 \]
i.e., if both semitrivial states do exist and are linearly unstable.
\end{enumerate}
On the other hand, \eqref{6.1} cannot admit a coexistence state in each of the following cases:
\begin{enumerate}[(A)]
\setcounter{enumi}{1}
\item\label{item6.2} If $\sigma[d_1{\mathcal{L}}_1-{\lambda},\mathcal{B}_1,\Omega]<0$, $\sigma[d_2{\mathcal{L}}_2-{\mu},\mathcal{B}_2,\Omega]< 0$, and
\[
  \sigma[d_2\mathcal{L}_2-\mu+c\theta_{\{d_1,\lambda,a\}},\mathcal{B}_2,\Omega]<0,\qquad  \sigma[d_1\mathcal{L}_1-\lambda+b\theta_{\{d_2,\mu,d\}},\mathcal{B}_1,\Omega]>0.
\]
\item\label{item6.3} If $\sigma[d_1{\mathcal{L}}_1-{\lambda},\mathcal{B}_1,\Omega]>0$ and $\sigma[d_2{\mathcal{L}}_2-{\mu},\mathcal{B}_2,\Omega]< 0$, which implies $\sigma[d_1\mathcal{L}_1-\lambda+b\theta_{\{d_2,\mu,d\}},\mathcal{B}_1,\Omega]>0$.
\item\label{item6.4} If $\sigma[d_1{\mathcal{L}}_1-{\lambda},\mathcal{B}_1,\Omega]>0$, $\sigma[d_2{\mathcal{L}}_2-{\mu},\mathcal{B}_2,\Omega]> 0$, i.e., there are no semitrivial states.
\item\label{item6.5} If $\sigma[d_1{\mathcal{L}}_1-{\lambda},\mathcal{B}_1,\Omega]<0$ and $\sigma[d_2{\mathcal{L}}_2-{\mu},\mathcal{B}_2,\Omega]> 0$, which implies $\sigma[d_2\mathcal{L}_2-\mu+c\theta_{\{d_1,\lambda,a\}},\mathcal{B}_2,\Omega]>0$.
\item\label{item6.6} If $\sigma[d_1{\mathcal{L}}_1-{\lambda},\mathcal{B}_1,\Omega]<0$, $\sigma[d_2{\mathcal{L}}_2-{\mu},\mathcal{B}_2,\Omega]< 0$, and
\[
  \sigma[d_2\mathcal{L}_2-\mu+c\theta_{\{d_1,\lambda,a\}},\mathcal{B}_2,\Omega]>0,\qquad
\sigma[d_1\mathcal{L}_1-\lambda+b\theta_{\{d_2,\mu,d\}},\mathcal{B}_1,\Omega]<0.
\]
\end{enumerate}
By the additivity property of the fixed point index, should they exist, the semitrivial solutions cannot be simultaneously linearly stable. All these regions have been represented in Figure \ref{Fig7} in the special case when $\l$ and $\mu$ are positive constants.

\begin{figure}[h!]
\centering
\includegraphics[scale=0.22]{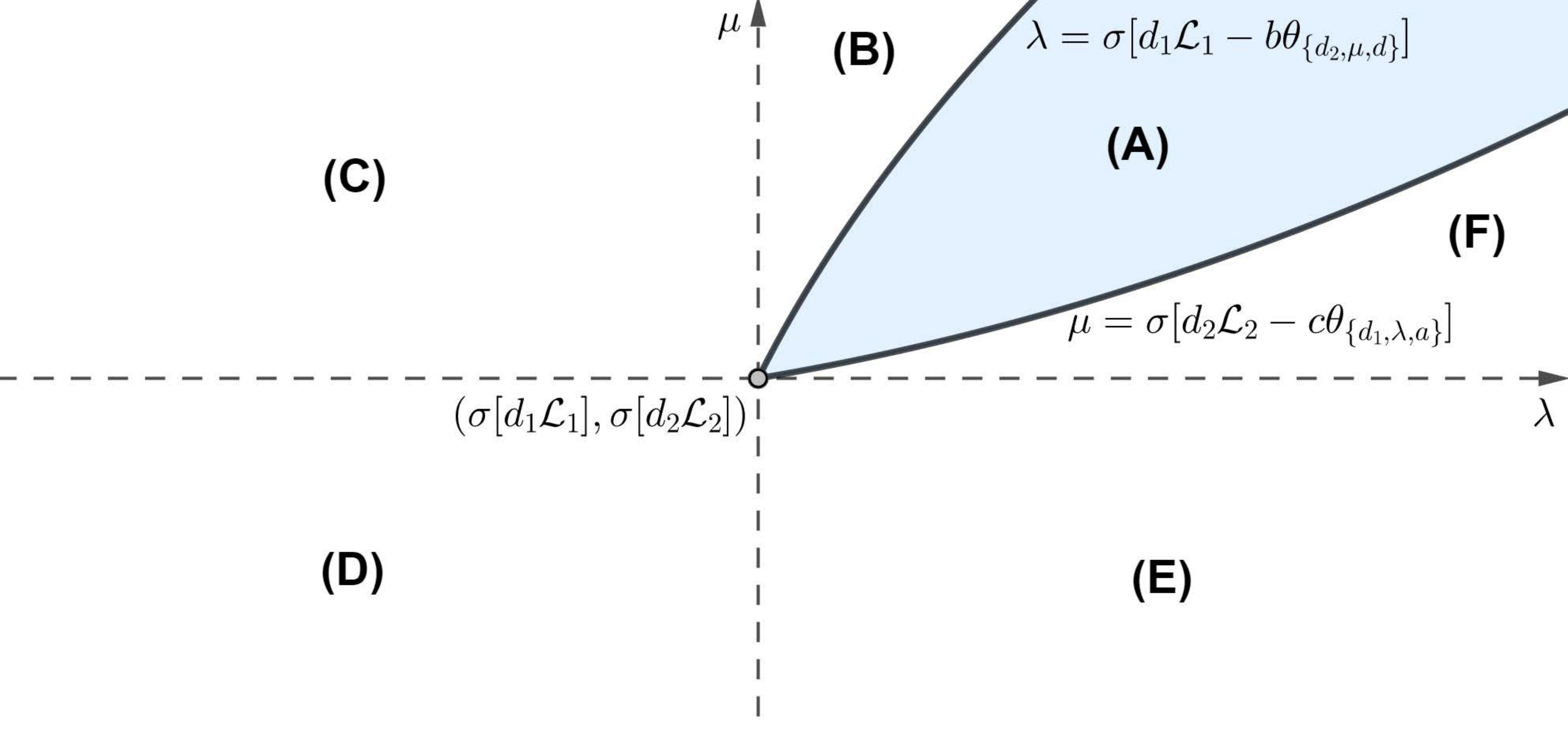}
\caption{Plot of an admissible  $(\lambda,\mu)$-plane for a problem whose coefficients satisfy the hypothesis of Theorem \ref{th6.2}. For the sake of simplicity, the boundary condition and the domain have been omitted in the principal eigenvalues.}
\label{Fig7}
\end{figure}

It remains to make sure that there are no coexistence states on each of the following limiting situation cases:
\begin{enumerate}[(I)]
\item\label{proofth6.2.itemI} $\sigma[d_1{\mathcal{L}}_1-{\lambda},\mathcal{B}_1,\Omega]<0$ and $\sigma[d_2{\mathcal{L}}_2-{\mu},\mathcal{B}_2,\Omega]= 0$, or  $\sigma[d_1{\mathcal{L}}_1-{\lambda},\mathcal{B}_1,\Omega]=0$ and $\sigma[d_2{\mathcal{L}}_2-{\mu},\mathcal{B}_2,\Omega]< 0$.
\item\label{proofth6.2.itemII} $\sigma[d_1{\mathcal{L}}_1-{\lambda},\mathcal{B}_1,\Omega]\ge 0$ and $\sigma[d_2{\mathcal{L}}_2-{\mu},\mathcal{B}_2,\Omega]= 0$, or  $\sigma[d_1{\mathcal{L}}_1-{\lambda},\mathcal{B}_1,\Omega]=0$ and $\sigma[d_2{\mathcal{L}}_2-{\mu},\mathcal{B}_2,\Omega]\ge 0$.
\item\label{proofth6.2.itemIII} $\sigma[d_1{\mathcal{L}}_1-{\lambda},\mathcal{B}_1,\Omega]<0$ and $\sigma[d_2{\mathcal{L}}_2-{\mu}+c \theta_{\{d_1,\lambda,a\}},\mathcal{B}_2,\Omega]= 0$, or  $\sigma[d_2{\mathcal{L}}_2-{\mu},\mathcal{B}_2,\Omega]< 0$ and $\sigma[d_1{\mathcal{L}}_1-{\lambda}+b\theta_{\{d_2,\mu,d\}},\mathcal{B}_1,\Omega]=0$.
\end{enumerate}
In each of these cases, the non-existence result is an immediate consequence of the implicit function theorem. Indeed, suppose that \eqref{6.1} admits a coexistence state, $(u_0,v_0)$, and, for every  $\varepsilon_1,\varepsilon_2\in \mathbb{R}$, consider the problem
\begin{equation}
\label{6.4}
\left\{
\begin{array}{ll}
d_1(x)\mathcal{L}_1 u=(\lambda+\varepsilon_1) u-au^2- b uv & \hbox{in}\;\;\Omega,\\
d_2(x)\mathcal{L}_2 v=(\mu+\varepsilon_2) v-dv^2- c uv & \hbox{in}\;\;\Omega,\\
\mathcal{B}_1 u=\mathcal{B}_2 v=0 & \hbox{on}\;\;\partial\Omega.
\end{array}
\right.
\end{equation}
The coexistence state $(u_0,v_0)$ solves \eqref{6.4} for the choice $(\varepsilon_1,\varepsilon_2)=(0,0)$. Moreover, by Theorem \ref{th6.1}, $(u_0,v_0)$ is linearly stable and, hence, non-degenerate. Thus, by the implicit function theorem, much like in the proof of  Theorem \ref{th5.3}, there exists $\varepsilon_0$ such that \eqref{6.4} admits a coexistence state for all $\varepsilon_1,\varepsilon_2\in[-\varepsilon_0,\varepsilon_0]$. And, due to Theorem \ref{th6.1}, these coexistence states are linearly stable.
\par
Suppose that Case (\ref{proofth6.2.itemI}) holds, i.e.,
\begin{equation*}
\sigma[d_1{\mathcal{L}}_1-{\lambda},\mathcal{B}_1,\Omega]<0\quad\hbox{and}\quad
\sigma[d_2{\mathcal{L}}_2-{\mu},\mathcal{B}_2,\Omega]= 0,
\end{equation*}
and the problem \eqref{6.1} has a coexistence state. Then, by the implicit function theorem,
\eqref{6.4} admits a coexistence state for $\varepsilon_1=0$ and $\varepsilon_2=-\varepsilon_0$ for sufficiently small $\varepsilon_0>0$. However, since
\begin{equation*}
\sigma[d_2{\mathcal{L}}_2-({\mu}-\varepsilon_0),\mathcal{B}_2,\Omega]>
\sigma[d_2{\mathcal{L}}_2-{\mu},\mathcal{B}_2,\Omega]= 0,
\end{equation*}
that problem fits the case (\ref{item6.5}) above, for which we already know that there are no coexistence states, leading to a contradiction.
\par
Now, suppose that the conditions of the first part of Case (\ref{proofth6.2.itemII}) hold, i.e.,
\begin{equation*}
\sigma[d_1{\mathcal{L}}_1-{\lambda},\mathcal{B}_1,\Omega]\ge 0\quad\hbox{and}\quad \sigma[d_2{\mathcal{L}}_2-{\mu},\mathcal{B}_2,\Omega]= 0,
\end{equation*}
and \eqref{6.1} admits a coexistence state. Then, \eqref{6.4} also has a coexistence state for $\varepsilon_1=\varepsilon_2=-\varepsilon_0$ with sufficiently small $\varepsilon_0>0$.  However, for this choice,
\begin{equation*}
\sigma[d_1{\mathcal{L}}_1-(\lambda-\varepsilon_0),\mathcal{B}_1,\Omega]> 0\quad\hbox{and}\quad \sigma[d_2{\mathcal{L}}_2-({\mu}-\varepsilon_0),\mathcal{B}_2,\Omega]> 0,
\end{equation*}
and hence, \eqref{6.4} matches the situation (\ref{item6.4}) above for which there are no coexistence states. This contradiction ends the proof in this case also.
\par
Finally, suppose that we are under the assumptions of the first part of Case (\ref{proofth6.2.itemIII}), i.e.,
\begin{equation*}
\sigma[d_1{\mathcal{L}}_1-{\lambda},\mathcal{B}_1,\Omega]<0\quad\hbox{and}\quad
\sigma[d_2{\mathcal{L}}_2-{\mu}+c \theta_{\{d_1,\lambda,a\}},\mathcal{B}_2,\Omega]= 0,
\end{equation*}
and that \eqref{6.1} admits a coexistence state. Then, by the implicit function theorem, \eqref{6.4} also admits a coexistence state with $\varepsilon_1=0$ and $\varepsilon_2=-\varepsilon_0$ for
sufficiently small $\varepsilon_0>0$. Moreover,
\begin{equation*}
\sigma[d_2{\mathcal{L}}_2-({\mu}-\varepsilon_0)+c \theta_{\{d_1,\lambda,a\}},\mathcal{B}_2,\Omega]> 0,
\end{equation*}
which fits the situation  (\ref{item6.6}) provided $\e_0$ is chosen so that
\[
  \sigma[d_2{\mathcal{L}}_2-({\mu}-\varepsilon_0),\mathcal{B}_2,\Omega]<0.
\]
This is again impossible, as we already know that the problem cannot admit a coexistence state in any of those situations.
\par
The fact that the unique stable steady-state solution is a global attractor with respect to the positive solutions of the parabolic counterpart of \eqref{6.1} can be easily adapted from the proof of Theorem \ref{th5.3}. This ends the proof of Theorem \ref{th6.2}.

%% If you have bibdatabase file and want bibtex to generate the
%% bibitems, please use
%%
%%  \bibliographystyle{elsarticle-num} 
%%  \bibliography{<your bibdatabase>}

\begin{thebibliography}{amsplain}

\bibitem{Am76} H. Amann, Fixed point equations and nonlinear eigenvalue problems in ordered Banach spaces, SIAM Review 18 (1976) 620--709.

\bibitem{AmPM} H. Amann, Maximum principles and principal eigenvalues, in \emph{Ten mathematical essays on approximation in analysis and topology} (J. Ferrera, J. L\'opez-G\'omez and F. R. Ruiz del Portal Eds.), 1--60, Elsevier B. V., Amsterdam, 2005.

\bibitem{ALG} H. Amann and J. L\'opez-G\'omez, A priori bounds and multiple solutions for superlinear indefinite elliptic problems, \emph{J. Diff. Eqns.} \textbf{146} (1998), 336--374.

\bibitem{ALLG} I. Ant\'{o}n and J. L\'{o}pez-G\'{o}mez, Principal eigenvalue and maximum principle for periodic-parabolic systems, \emph{Nonl. Anal.} \textbf{178} (2019), 152--189.

\bibitem{BT} M. Benzi and E. Toscano, Mauro Picone, Sandro Faedo, and the numerical solution of Partial Differential Equations in Italy (1928--1953), \emph{Numer. Algor.} \textbf{66} (2014), 105--145.

\bibitem{BCDN} H. Berestycki, I. Capuzo-Dolcetta and L. Nirenberg, Variational methods for indefinite superlinear homogeneous elliptic problems, \emph{NoDEA Nonl. Diff. Eqns. Appl.} \textbf{2} (1995), 553--572.

\bibitem{CCLG} S. Cano-casanova and J. L\'{o}pez-G\'{o}mez, Properties of the principal eigenvalues of a
general class of non-classical mixed boundary value proeblems, \emph{J. Diff. Eqns.}
\textbf{178} (2002), 123--211.

\bibitem{CR71} M. G. Crandall and P. H. Rabinowitz, Bifurcation from simple eigenvalues, J. Funct. Anal. 8 (1971) 321--340.

\bibitem{Dancer83} E. N. Dancer, On the indices of fixed points of mappings in cones and applications, \emph{J. Math. Anal. Appl.} \textbf{91} (1983), 131--151.

\bibitem{Dancer} E. N. Dancer, The effect of domain shape on the number of positive solutions of certain nonlinear equations, \emph{J. Diff.  Eqns.} \textbf{74} (1988), 120--156.

\bibitem{DLGS} M. Delgado, J. L\'{o}pez-G\'{o}mez and A. Su\'arez, On the symbiotic Lotka--Volterra
model with diffusion and transport effects, \emph{J. Diff. Eqns.} \textbf{160} (2000), 175--262.

\bibitem{EKR} M. Epple, A. Karachalios and V. R. Remmert. Aerodynamics and Mathematics in National Socialist Germany and Fascist Italy: A Comparison of Research Institutes. \emph{Osiris}, 2nd Series, Vol. 20, Politics and Science in Wartime: Comparative International Perspectives on the Kaiser Wilhelm Institute (2005), 131--158.

\bibitem{Fel} G. Feltrin, \emph{Positive Solutions to Indefinite Problems. A Topological Approach},
Frontiers in Mathematics, Birkh\"auser, Cham, 2018.

\bibitem{FRLGNA} S. Fern\'andez-Rinc\'on and J. L\'opez-G\'omez, Spatially heterogeneous Lotka--Volterra
competition, \emph{Nonl. Anal.} \textbf{165} (2017), 33--79.

\bibitem{FRLGANS} S. Fern\'andez-Rinc\'on and J. L\'opez-G\'omez, The Singular Perturbation Problem for a Class of Generalized Logistic Equations Under Non-classical Mixed Boundary Conditions, \emph{Adv. Nonl. Studies} \textbf{19} (2019), 1--27.

\bibitem{FKLM}  J. M. Fraile, P. Koch Medina, J. L\'opez-G\'omez and S. Merino, Elliptic eigenvalue problems and unbounded continua of positive solutions of a semilinear elliptic equation, \emph{J. Diff. Eqns.} \textbf{127} (1996), 295--319.

\bibitem{FLG} J. E. Furter and J. L\'opez-G\'omez, On the existence and uniqueness of coexistence states
for the Lotka--Volterra competition model with diffusion and spatially dependent coefficients,
\emph{Nonl. Anal.} \textbf{25} (1995), 363--398.

\bibitem{GRLG00} R. G\'omez-Re\~{n}asco and J. L\'opez-G\'omez, The effect of varying coefficients on the dynamics of a class of superlinear indefinite reaction diffusion equations, \emph{J. Diff. Eqns.}
     \textbf{167} (2000) 36--72.

\bibitem{GRLG01} R. G\'omez-Re\~{n}asco and J. L\'opez-G\'omez, The uniqueness of the stable positive solution for a class of superlinear indefinite reaction diffusion equations,
\emph{Diff. Int. Eqns.} \textbf{14} (2001), 751--768.

\bibitem{HeNiA} X. He and W. M. Ni, Global dynamics of the Lotka--Volterra competition-diffusion system, Diffusion and spatial heterogeneity, I, \emph{Comm. Pure Appl. Math.} \textbf{69} (2016), 981--1014.

\bibitem{HeNiB} X. He and W. M. Ni, Global dynamics of the  Lotka--Volterra competition-diffusion system with equal amount of total resources, \emph{Calc. Var. PDEs} \textbf{25} (2016), 1--20.

\bibitem{Hess} P. Hess, \emph{Periodic-Parabolic Boundary Value Problems and Positivity}, Pitman Research Notes in Mathematics Series, 247. Longman Scientific \& Technical, Harlow, Essex, 1991.

\bibitem{Kato} T. Kato, \emph{Perturbation Theory for Linear Operators}, Springer, 1995.

\bibitem{Kreith} K. Kreith, Picone's identity and generalizations, \emph{Rend. di Matematica} \textbf{8} (1975), 251--262.

\bibitem{LP}   M. Langlais and D. Phillips, Stabilization of solutions of nonlinear and degenerate evolution equations, \emph{Nonl. Anal.} \textbf{9} (1985), 321--333.

\bibitem{LGDIE} J. L\'opez-G\'omez, Positive periodic solutions of Lotka-–Volterra Reaction-Diffusion Systems, \emph{Diff. Int. Eqns.} \textbf{5} (1992), 55--72.

\bibitem{LG97} J. L\'opez-G\'omez, On the existence of positive solutions for some indefinite
superlinear elliptic problems, \emph{Commun. PDEs} \textbf{22} (1997), 1787--1804.

\bibitem{LG01} J. L\'opez-G\'omez, Spectral Theory and Nonlinear Functional Analysis, in: Research Notes in Mathematics, vol. 426, Chapman and Hall/CRC Press, Boca Raton, Florida, 2001.

\bibitem{LG13} J. L\'{o}pez-G\'{o}mez, \emph{Linear Second Order Elliptic Operators}, World Scientific,
Singapore, 2013.

\bibitem{LG15} J. L\'{o}pez-G\'{o}mez, \emph{Metasolutions of Parabolic Equations in Population Dynamics}, CRC Press, Boca Raton, 2015.

\bibitem{LGMM} J. L\'{o}pez-G\'{o}mez and M. Molina-Meyer, The maximum principle for cooperative weakly coupled elliptic systems and some applications, \emph{Diff. Int. Eqns.} \textbf{7} (1994), 383--298.

\bibitem{LGMMT} J. L\'opez-G\'omez, M. Molina-Meyer and A. Tellini, The uniqueness of the linearly stable positive solution for a class of superlinear indefinite problems with a inhomogeneous boundary condition, \emph{J. Diff. Eqns.} \textbf{255} (2013), 503--523.

\bibitem{LGS} J. L\'{o}pez-G\'{o}mez and J. C. Sabina de Lis, Coexistence states and global attractivity for the convective diffusive competition model, \emph{Trans. Amer. Math. Soc.} \textbf{347} (1995), 3797--3833.

\bibitem{LGT} J. L\'{o}pez-G\'{o}mez and A. Tellini, Generating an arbitrarily large number of isolas in a superlinear indefinite problem, \emph{Nonl. Anal.} \textbf{108} (2014), 223--248.

\bibitem{LGTZ} J. L\'{o}pez-G\'{o}mez, A. Tellini and F. Zanolin, High multiplicity and complexity of the bifurcation diagrams of large solutions for a class of superlinear indefinite problems, \emph{Commun. Pure Appl. Anal.} \textbf{13} (2014), 1--73.

\bibitem{MM1} M. Molina-Meyer, Global attractivity and singular perturbation for a class of nonlinear cooperative systems, \emph{J. Diff. Eqns.} \textbf{128} (1996), 347--378.

\bibitem{MM2} M. Molina-Meyer, Existence and uniqueness of coexistence states for some nonlinear elliptic systems, \emph{Nonl. Anal.} \textbf{25} (1995), 279--296.


\bibitem{Picone} M. Picone, Sui valori eccezionali di un parametro da cui dipende un \'{e}equazione differenziale ordinaria del secondo \'{o}rdine, \emph{Ann. Sc. Norm. Super. Pisa} \textbf{11} (1910) 1--144.

\bibitem{Sat} D. Sattinger, \emph{Topics in Stability and Bifurcation Theory}, Lecture Notes in Mathematics 309, Springer, Heilderberg, 1973.



\end{thebibliography}

%% else use the following coding to input the bibitems directly in the
%% TeX file.

\end{document}